\documentclass[10pt, twocolumn]{article}

%%%%%%%%%%%%%%%%%%%%%%%%%%%%%%%%%
% Setup Template: Springer Style
%%%%%%%%%%%%%%%%%%%%%%%%%%%%%%%%%
% Computer Modern fonts with more encodings and font shapes for amsmath
\usepackage{lmodern}
\usepackage[T1]{fontenc}
%%%%%%%%%%%%%%%%%%%%%%%%%%%%%%%
% %% Bibliography
\usepackage[natbib=true, style=numeric, sorting=none, giveninits=true, maxnames=6, useprefix=true]{biblatex}
\addbibresource{literature.bib}
% Custom format for author names in citations: Last name first, abbreviated first names
\DeclareNameAlias{default}{family-given/given-family}
% Use comma as separator between multiple authors

% Add colon after the list of authors in citations

\DeclareFieldFormat[article]{volume}{\mkbibbold{#1}}
\renewbibmacro*{volume+number+eid}{%
  \printfield{volume}%
  \iffieldundef{number}
    {}
    {(\printfield{number})}%
  \setunit{\addcomma\space}%
  \printfield{eid}}
% Ignore month field entirely
\DeclareSourcemap{
  \maps[datatype=bibtex]{
    \map{
      \step[fieldset=month, null]
    }
  }
}
% Ignore ISSN and ISBN fields entirely
\AtEveryBibitem{\clearfield{issn}}
\AtEveryBibitem{\clearfield{isbn}}

%%%%%%%%%%%%%%%%%%%%%%%%%%%%%%%%%%%%%
%% Geometry package for layout
\usepackage[
    %reversemp,
    paperwidth=193mm,
    paperheight=260mm,
    top=19.29mm,
    headheight=12pt,
    headsep=5.15mm,
    text={160mm,216mm},
    marginparsep=5mm,
    marginparwidth=12mm,
    %bindingoffset=6mm,
    footskip=10.13mm,
    twocolumn,
    columnsep=8mm
]{geometry}

% Adjustments to footnotes
\setlength\footnotesep{7pt}
\setlength{\skip\footins}{18pt plus 6pt minus 3pt}

% Redefine footnoterule
\renewcommand\footnoterule{%
  \kern3pt%
  \hrule width \columnwidth height 0.2mm
  \kern5.5pt%
}
\setlength{\arraycolsep}{2pt}

% Keywords command
\providecommand{\keywords}[1]
{
  \small	
  \\[2mm] \textbf{Keywords:} #1 \\[2mm]
}

\newcommand*\samethanks[1][\value{footnote}]{\footnotemark[#1]}

% proof environment
\AtBeginEnvironment{proof}{\small}

% Define custom section formatting
\usepackage{titlesec}
\titleformat{\section}
  {\normalfont\bfseries\fontsize{14pt}{16pt}\selectfont\raggedright}
  {\thesection}{1em}{}
  
\titleformat{\subsection}
  {\normalfont\bfseries\fontsize{12pt}{14pt}\selectfont\raggedright}
  {\thesubsection}{1em}{}

\titleformat{\subsubsection}
  {\normalfont\bfseries\fontsize{11pt}{13pt}\selectfont\raggedright}
  {\thesubsubsection}{1em}{}

%%%%%%%%%%%%%%%%%%%%%%%%%%%%%%%%%%%%%%%%%%%%%
% include packages
%%%%%%%%%%%%%%%%%%%%%%%%%%%%%%%%%%%%%%%%%%%%%

 % amsthm includes proof environment 

\usepackage{amsmath,amsthm,etoolbox}    % For nicer equations, assumes amsmath package installed
\usepackage{aligned-overset}
\usepackage{amssymb}    % For nicer equations, assumes amsmath package installed
\usepackage{comment}    % For comments
\usepackage{color}      % For highlighting
\usepackage[font=small,labelfont=bf]{caption}

\usepackage{hyperref}
\usepackage{xurl}
\hypersetup{breaklinks=true}
\usepackage{units}

% graphics
\usepackage{graphicx}
\usepackage{xcolor} % For colored frames

\usepackage{enumerate}

% Additional settings:
% Always put figures in a folder called `Figures':
\graphicspath{{Figures/}}

%%%%%%%%%%%%%%%%%%%%%%%%%%%%%%%%
% add more commands
%%%%%%%%%%%%%%%%%%%%%%%%%%%%%%%%
%% as per the requirement new theorem styles can be included as shown below
\newtheorem{theorem}{Theorem}[section]% meant for sectionwise numbers

\newtheorem{lemma}{Lemma}[section]
\newtheorem*{definition}{Definition}%
\newtheorem*{assumption}{Assumption}

% Define a new theorem style for remarks with italic headings and small body font
\newtheoremstyle{smallremarkstyle}
  {3pt} % Space above
  {3pt} % Space below
  {\small} % Body font (smaller size for remarks)
  {} % Indent
  {\itshape} % Theorem head font (italic)
  {.} % Punctuation after theorem head
  {.5em} % Space after theorem head
  {} % Theorem head spec

\theoremstyle{smallremarkstyle}
\newtheorem{remark}{Remark}

\newenvironment{customthm}[1]
  {\innercustomthm}
  {\endinnercustomthm}

% add commands
\newcommand{\mat}[1]{\boldsymbol{#1}}
\renewcommand{\vec}[1]{\boldsymbol{#1}}

\newcommand{\mr}[1]{\mathrm{#1}}
\newcommand{\Tr}{^{\mathop{\mathrm{T}}}}

% prevent in-line equations from breaking
\binoppenalty=10000 
\relpenalty=10000 

\makeatletter
\preto\env@matrix{\renewcommand{\arraystretch}{1.2}}
\makeatother

\makeatletter
\DeclareRobustCommand\vdots{%
  \mathpalette\@vdots{}%
}
\newcommand*{\@vdots}[2]{%
  % #1: math style
  % #2: unused
  \sbox0{$#1\cdotp\cdotp\cdotp\m@th$}%
  \sbox2{$#1.\m@th$}%
  \vbox{%
    \dimen@=\wd0 %
    \advance\dimen@ -3\ht2 %
    \kern.5\dimen@
    % remove side bearings
    \dimen@=\wd2 %
    \advance\dimen@ -\ht2 %
    \dimen2=\wd0 %
    \advance\dimen2 -\dimen@
    \vbox to \dimen2{%
      \offinterlineskip
      \copy2 \vfill\copy2 \vfill\copy2 %
    }%
  }%
}
\DeclareRobustCommand\ddots{%
  \mathinner{%
    \mathpalette\@ddots{}%
    \mkern\thinmuskip
  }%
}
\newcommand*{\@ddots}[2]{%
  % #1: math style
  % #2: unused
  \sbox0{$#1\cdotp\cdotp\cdotp\m@th$}%
  \sbox2{$#1.\m@th$}%
  \vbox{%
    \dimen@=\wd0 %
    \advance\dimen@ -3\ht2 %
    \kern.5\dimen@
    % remove side bearings
    \dimen@=\wd2 %
    \advance\dimen@ -\ht2 %
    \dimen2=\wd0 %
    \advance\dimen2 -\dimen@
    \vbox to \dimen2{%
      \offinterlineskip
      \hbox{$#1\mathpunct{.}\m@th$}%
      \vfill
      \hbox{$#1\mathpunct{\kern\wd2}\mathpunct{.}\m@th$}%
      \vfill
      \hbox{$#1\mathpunct{\kern\wd2}\mathpunct{\kern\wd2}\mathpunct{.}\m@th$}%
    }%
  }%
}
\makeatother

\title{\huge Continuation of Periodic Orbits in Conservative Hybrid Dynamical Systems and its Application to Mechanical Systems with Impulsive Dynamics}
\author{Maximilian Raff\thanks{} \and C. David Remy\samethanks}
\date{} % clear date

% Actual document starts here.   
\begin{document}

\allowdisplaybreaks % Allow breaks in equations

\begin{titlepage}
\begin{center}
This preprint has been accepted for publication in Nonlinear Dynamics.\\[3mm]
DOI: \href{https://doi.org/10.1007/s11071-024-10565-3}{10.1007/s11071-024-10565-3}\\[5mm]
Please cite the paper as: 
Raff, M., Remy, C.D. Continuation of periodic orbits in conservative hybrid dynamical systems and its application to mechanical systems with impulsive dynamics. Nonlinear Dyn (2025).
https://doi.org/10.1007/s11071-024-10565-3\\[15mm]

First submission: 22 December 2023\\[3mm]
Major revision received: 04 June 2024\\[3mm]
Re-submission: 27 August 2024\\[3mm]
Accepted: 25 October 2024\\[3mm]
Published: 23 January 2025\\[3mm]

% IEEE Explore: https://ieeexplore.ieee.org/document/9809817\\[5mm]

% \copyright 2022 IEEE. Personal use of this material is permitted. Permission from IEEE must be obtained for all
% other uses, in any current or future media, including reprinting/republishing this material for advertising
% or promotional purposes, creating new collective works, for resale or redistribution to servers or lists, or
% reuse of any copyrighted component of this work in other works. DOI: 10.1109/LRA.2022.3186500
\end{center}
\end{titlepage}

\twocolumn[
  \begin{@twocolumnfalse}
    \maketitle
    \begin{abstract}
        In autonomous differential equations where a single first integral is present, periodic orbits are well-known to belong to one-parameter families, parameterized by the first integral's values.
        This paper shows that this characteristic extends to a broader class of conservative hybrid dynamical systems~(cHDSs).
        We study periodic orbits of a cHDS, introducing the concept of a hybrid first integral to characterize conservation in these systems. 
        Additionally, our work presents a methodology that utilizes numerical continuation methods to generate these periodic orbits, building upon the concept of normal periodic orbits introduced by Sepulchre and MacKay (1997).
        We specifically compare state-based and time-based implementations of an cHDS as an important application detail in generating periodic orbits. 
        Furthermore, we showcase the continuation process using exemplary conservative mechanical systems with impulsive dynamics.
      \keywords{Conservative dynamics, first integrals, Poincaré map, numerical continuation methods, multiple shooting, bifurcations}
    \end{abstract}
  \end{@twocolumnfalse}
]
\renewcommand{\thefootnote}%
    {\fnsymbol{footnote}}
  \footnotetext[1]{The authors are with the Institute for Nonlinear Mechanics, University of Stuttgart, D-70569 Stuttgart, Germany.{\tt\small \{raff, remy\}@inm.uni-stuttgart.de}\\This work was funded by the Deutsche Forschungsgemeinschaft (DFG, German Research Foundation) – 501862165. It was further supported through the International Max Planck Research School for Intelligent Systems (IMPRS-IS) for Maximilian Raff. We extend our sincere gratitude to Nelson Rosa for his contributions in past collaborations, which influenced this work.}
  \renewcommand{\thefootnote}{\arabic{footnote}}

\normalsize
\section{Introduction}
\label{sec:Intro}
Hybrid dynamical systems (HDSs) constitute an essential class of dynamical systems.
They merge continuous-time and discrete-time dynamics and are employed in modeling phenomena that exhibit continuous evolution alongside sudden discrete changes.
With these properties, they provide great utility across disciplines such as engineering, biology and economics~\cite{Schaft2000,Goebel2009,Aihara2010,Westervelt2003,Tomlin2000}.
While the applications of HDSs are wide-ranging and their dynamical behavior are generally complex, there exists a common interest in exploring specific attributes such as periodicity, autonomy, or symmetries. 
Studying these properties aims to simplify and understand the complex behaviors within HDSs~\cite{Ames2006,Ames2006a,Ames2006b,Johnson2016,Burden2015}.

This paper is focused specifically on periodicity in conservative hybrid dynamical systems.
Periodic behaviors play an important role in a wide range of applications, including, for example, astrodynamics~\cite{Montenbruck2002,Barrow-Green1997} and forced vibrations~\cite{Guckenheimer1983, Kerschen2009}.
For us, this particular emphasis is motivated by research in the domain of legged systems.
In this field, simplified mechanical models can be used to explore the fundamental principles of locomotion \cite{Garcia1998, Kuo2001, Blickhan1993, McMahon1987, McGeer1990, Remy2009}.
A subclass of these models is energetically conservative, such that the periodic orbits that underlie the gaits can be generated without any external input. 
Despite their simplicity, such conservative models showcase remarkable potential in exploring diverse families of gaits~\cite{Raff2022a,Gan2018,Geyer2006}.

Within ordinary differential equations (ODEs), conservation is a well-established concept linked to the existence of functions that remain constant along solutions, termed first integrals~\cite{Hartman2002,Arnold1992,Pontryagin1962}. 
Specifically, autonomous ODEs possessing only a single first integral are known to locally form one-dimensional (1D) families of periodic orbits~\cite{Sepulchre1997,Krauskopf2007,Albu-Schaeffer2020}. 
In the domain of HDSs, efforts have also been directed towards addressing conservation by identifying symmetries within these systems~\cite{Ames2006,Razavi2016}. These symmetries, governed by Noether's theorem, generate first integrals, thus establishing conservative dynamics~\cite{Gorni2021}. 
Beyond the general studies concerning the existence and uniqueness of solutions within HDSs~\cite{Lygeros2003,Goebel2009}, particular attention has been devoted to understanding sensitivities~\cite{Wendel2012,Bizzarri2016,Pace2017}.  
This derivative information is vital not only for local stability analyses but also for studying the local existence of periodic orbits.
While our paper does not address any general existence of periodic orbits, it is worth noting that there is ongoing but limited research in this area~\cite{Simic2002,Colombo2020}.

The objective of this work is to contribute a systematic understanding of periodic orbits in a broader scope of conservative hybrid dynamical systems (cHDSs), offering a robust framework and methodology for their analysis and generation.
The key contributions of the paper are
\begin{itemize}
    \item Introduction of a hybrid first integral to characterize cHDSs.
    \item Proof of the local existence of families of periodic orbits in cHDSs based on the construction of a Poincaré map~(Theorem~\ref{thm:continuance}).
    \item Modification of the Poincaré map to introduce a dissipation parameter, thereby regularizing its conservation-induced state dependencies.
    \item Extension of Theorem~1 in \cite{Sepulchre1997} and Lemma~6 in \cite{Munoz-Almaraz2003} to cHDS, proving that fixed points of the regularized Poincaré map are equivalent to the solution space of periodic orbits in the cHDS (Theorem~\ref{theoremXi0}).
    \item Comparative analysis between time- and state-based implementation methodologies in finding periodic orbits.
    \item Illustrative examples from mechanics, demonstrating the continuation of periodic orbits, starting from simple bifurcation points such as equilibria.
\end{itemize}

In the remainder of this paper, {we first construct recurrent trajectories for conservative hybrid dynamical system, detailing their properties.} 
This construction sets the stage for the paper's main result, Theorem~\ref{thm:continuance} (Section~\ref{sec:HDS}).
We subsequently adapt the Poincaré map to be better suited for numerical continuation, detail implementation specifics, and compare state-based and time-based numerical integration methods for constructing periodic orbits (Section~\ref{sec:Implementation}).
This continuation process is demonstrated on four distinct conservative mechanical systems with impulsive dynamics, putting a particular emphasis on the effectiveness of the time-based implementation in finding analytic initial points (Section~\ref{sec:Examples}).
Section~\ref{sec:Discus} concludes the paper, followed by an Appendix that provides the derivation of the saltation matrix (Appendix~\ref{sec:appendSaltation}), the construction of the Poincaré map's Jacobian when the the Poincaré section is placed at an event (Appendix~\ref{sec:appendixPoincare}), and detailed explanations of selected proofs (Appendix~\ref{sec:appendixProof}).
\section{Hybrid Dynamics \& Conservative Orbits}
\label{sec:HDS}
\begin{table}[!t]
\centering
\small
    \begin{tabular*}{\columnwidth}{@{\hskip 1pt} l @{\extracolsep{\fill}} l }
        \hline\\[-2mm]
         Symbol & Brief description\\[1mm]
        \hline\\[-2mm]
         $i\in\{1,\dots,m\}$ & Index of a phase (Sec. \ref{sec:hybridDyn})  \\[0.5mm]
         $m\in\mathbb{N}$ & Number of phases (Sec. \ref{sec:hybridDyn})  \\[0.5mm]
         $\Sigma =\left(\mathcal{X},\mathcal{F},\mathcal{E},\mathcal{D}\right)$ &  HDS (Sec. \ref{sec:hybridDyn})\\[0.5mm]
         $\mathcal{X}_i\subseteq \mathbb{R}^{n_i}$ & Phase domain (Sec. \ref{sec:hybridDyn})  \\[0.5mm]
         $n_i\in\mathbb{N}$ & Dimension of phase (Sec. \ref{sec:hybridDyn}) \\[0.5mm]
         $\vec f_i:\mathcal{X}_i\to T\mathcal{X}_i$ & Phase vector field (Sec. \ref{sec:hybridDyn})  \\[0.5mm]
         $\vec x_i \in \mathcal{X}_i$ & Phase state (Sec. \ref{sec:hybridDyn})\\[0.5mm]         $\mathcal{F}_i:\dot{\vec{x}}_i=\vec{f}_i(\vec{x}_i)$& Phase dynamics (Sec. \ref{sec:hybridDyn})\\[0.5mm]
         $\vec\varphi_i(t,\bar{\vec{x}}_{i})=:\vec x_i(t)$& Phase flow (Sec. \ref{sec:hybridDyn})\\[0.5mm]
         $t_i^{\text{max}}\in\mathbb{R}\cup\{\infty\}$& Maximal flow time (Sec. \ref{sec:hybridDyn})\\[0.5mm]
         $e_i^{i+1}:\mathcal{X}_i\to \mathbb{R}$ & Event function (Sec. \ref{sec:hybridDyn})  \\[0.5mm]
         $\mathcal{E}_i^{i+1}\subset \mathcal{X}_i$ & Event manifold / guard \\[0.5mm]
         $\vec{\Delta}_i^{i+1}\hspace{-1mm}:\mathcal{E}_i^{i+1}\to\mathcal{X}_{i+1}$ & Reset map (Sec. \ref{sec:hybridDyn})  \\[0.5mm]
         $\vec{x}_i^-$, $\vec{f}_i^-$ & Pre event quantitiy (Sec. \ref{sec:hybridDyn})  \\[0.5mm]
         $\vec{x}_i^+$, $\vec{f}_i^+$ & Post event quantitiy (Sec. \ref{sec:hybridDyn})  \\[0.5mm]
         $\bar{\vec x}_i\in \mathcal{X}_i,~\vec{x}_0=\bar{\vec x}_1$ & Initial conditions (Sec. \ref{sec:hybridDyn})\\[0.5mm]
         {$\mathcal{X}_0\subset \mathcal{X}_1$} & {Neighborhood of $\vec{x}_0$ (Sec. \ref{sec:hybridDyn})}\\[0.5mm]
         $t_{i}^\mathcal{E}:\mathcal{X}_i\to \mathbb{R}\cup\{\infty\}$ & {Time-to-event function} \eqref{eq:time2transition}\\[0.5mm]
         $t^\mathcal{E}$ & Accumulated event times \\[0.5mm]
         $\vec\varphi(t,\vec{x}_{0})=:\vec x(t)$ & (Recurrent) hybrid flow \eqref{eq:hybridFlow}\\[0.5mm]
         $\mathcal{T}$ & Final time interval (Sec. \ref{sec:hybridDyn})\\[0.5mm]
         $\mat{\Phi}_i\widehat{=}\nicefrac{\partial \vec{\varphi}_i}{\partial \bar{\vec{x}}_i}$
         & Fundamental matrix\\
         & of phase $i$ \eqref{eq:phasePhi}\\[0.5mm]
          $\mat{S}_i^{i+1}$ & Saltation matrix \eqref{eq:Saltation}\\[0.5mm]
          $\mat{\Phi}\widehat{=}\nicefrac{\partial \vec{\varphi}}{\partial \vec{x}_0}$ & Fundamental matrix\\
          & of the hybrid flow \eqref{eq:fundamentalMatrixHybrid}\\[0.5mm] 
          $H_i:\mathcal{X}_i\to \mathbb{R}$ & Phase first integral (Sec. \ref{sec:firstInt})\\[0.5mm]
          $k_i\in\mathbb{N}$ & Number of $H_i$ (Sec. \ref{sec:firstInt})\\[0.5mm]
          $\mathcal{H}:=\{H_i\}_1^m$ & Hybrid first integral (Sec. \ref{sec:firstInt})\\[0.5mm]
          $k_\mathcal{H}\in\mathbb{N}$ & Dimension of $\mathcal{H}$ (Sec. \ref{sec:firstInt})\\[0.5mm]
          $\Sigma^\mathcal{H}$ & Conservative HDS (Sec. \ref{sec:firstInt})\\[0.5mm]
          {$\mathcal{A}\subset \mathcal{X}_1$} & Poincaré section (Sec. \ref{sec:Poincare})\\[0.5mm]
          {$a:\mathcal{X}_1\to\mathbb{R}$} & Anchor function (Sec. \ref{sec:Poincare})\\[0.5mm]
          $t_{m+1}^\mathcal{A}:\mathcal{X}_1\to \mathbb{R}$ & {Time-to-anchor function} \eqref{eq:accumulationTimeAnchor}\\[0.5mm]
         $t^\mathcal{A}$ & Overall time to anchor \eqref{eq:accumulationTimeAnchor}\\[0.5mm]
         {$\mathcal{A}_0= \mathcal{X}_0\cap \mathcal{A}$} & Initial subset of $\mathcal{A}$ (Sec. \ref{sec:Poincare})\\[0.5mm]
         $\vec{P}:\mathcal{A}_0\to\mathcal{A}$ & Poincaré map (Sec. \ref{sec:Poincare})\\[0.5mm]
         $\mat{I}$ & Identity matrix \eqref{eq:PoincareJacobian}\\[0.5mm]
         $T\in\mathbb{R}$ & Period time \eqref{eq:periodicity}\\[0.5mm]
         $\mat{\Phi}_T:=\mat{\Phi}\left(T,\vec{x}_0\right)$ & Monodromy matrix (Sec.~\ref{sec:ConservativeOrbits})\\[0.5mm]
         $\bar{H}\in\mathbb{R}$ & Level set of $\mathcal{H}$ (Sec.~\ref{sec:ConservativeOrbits})\\[1mm]
        \hline
    \end{tabular*}
    \caption{These key symbols are used throughout this paper. Next to a short description, the symbol's first introduction is marked by a section or equation number.}
    \label{tab:keySymbols}
\end{table}
In the following, {we construct recurrent trajectories of hybrid dynamical systems and examine various of their properties.
Such recurrent trajectories start in a particular phase of the hybrid system and revisit this phase again after some time~$t$.
We examine them,} by revisiting well-established properties of ordinary differential equations~(ODEs) before extending them to the context of hybrid dynamical systems. 
These fundamental properties play a crucial role in our work and establish the groundwork for our subsequent discussion on periodic orbits.

Table \ref{tab:keySymbols} summarizes the notation and symbols used throughout this paper. 

\subsection{{Recurrent Trajectories in Hybrid Dynamical Systems}}\label{sec:hybridDyn}
{We are interested in recurrent trajectories of a class of autonomous hybrid dynamical systems~(HDSs) with~$m\in\mathbb{N}$ continuous phases and discrete transitions that switch between these phases.
By adapting the notation of \cite{Westervelt2018}, we restrict trajectories to traverse these phases in a fixed recurring order~$1\to2\to\dots\to m\to 1$, with modulo notation~$m+1=1$.}

For each phase~$i$, the system's state is denoted by~$\vec x_i \in \mathcal{X}_i$, where~$\mathcal{X}_i$ is an open connected subset of~$\mathbb{R}^{n_i}$.
The continuous evolution of the state~$\vec x_i$ in phase~$i$ is defined by the phase dynamics~$\dot{\vec{x}}_i= \vec f_i(\vec x_i)$, where the vector field~$\vec f_i:\mathcal{X}_i\to T\mathcal{X}_i$ is continuously differentiable.
The \emph{phase flow}~$\vec \varphi_i: [0,t_i^{\text{max}}) \times \mathcal{X}_i \rightarrow \mathcal{X}_i$ describes a general solution to these phase dynamics.
It characterizes the motion within phase~$i$, starting from any initial condition~$\bar{\vec{x}}_{i}\in\mathcal{X}_i$ and continuing up to a maximal time~$t_i^{\text{max}}\in\mathbb{R}\cup\{\infty\}$.
For any phase~$i$, the phase flow~$\vec \varphi_i$ is uniquely determined over a maximal interval~$[0,t_i^{\text{max}})$ due to the local Lipschitz continuity of the vector field~$\vec{f}_i$ (cf. Theorem~1.1~\cite{Hartman2002}, Theorem~3.1~\cite{Leine2004}).
For brevity, we occasionally use the notation~$\vec x_i(t):=\vec\varphi_i(t,\bar{\vec{x}}_{i})$ to {emphasize specific phase trajectories.}

Additionally, an embedded codimension-one submanifold~$\mathcal{E}_i^{i+1}$ in~$\mathcal{X}_i$, referred to as event manifold or guard, determines a transition from phase~$i$ to phase~$i+1$ via the reset map~$\vec{\Delta}_i^{i+1}:\mathcal{E}_i^{i+1}\to \mathcal{X}_{i+1}$, with~$\vec{\Delta}_i^{i+1}\in C^1$.
This phase switch occurs whenever a continuously differentiable~\emph{event~function}~$e_i^{i+1}:\mathcal{X}_i\to \mathbb{R}$ becomes zero.
Hence, the zeros of the scalar function~$e_i^{i+1}(\vec{x}_i)$ define the event manifold as
\begin{align*}
    \mathcal{E}_i^{i+1}=\left\{\vec x_i\in \mathcal{X}_i\Bigg\vert \begin{array}{l}e_i^{i+1}(\vec{x}_i)=0,\\[1mm]\dot e_i^{i+1}(\vec{x}_i)<0\end{array}\right\},
\end{align*}
where the derivative~$\mr{d}e_i^{i+1}(\vec{x}_i)$ is non-zero for all~$\vec{x}_i\in\mathcal{E}_i^{i+1}$.
Following \cite{Westervelt2018}, we define the phase-$i$ \emph{time-to-event function}~$t_{i}^\mathcal{E}:\mathcal{X}_i\to \mathbb{R}\cup\{\infty\}$, by
\begin{align}\label{eq:time2transition}    
t_{i}^\mathcal{E}(\bar{\vec{x}}_{i}):= \inf\{t\geq 0 \vert \vec \varphi_i(t,\bar{\vec{x}}_{i})\in\mathcal{E}_i^{i+1}\},
\end{align}
where, by convention, the infimum of an empty set is taken to be~$\infty$.

Adopting the notation of \cite{Grizzle2014}, we summarize the hybrid model as
{\begin{align*}%
\Sigma: \left\{ 
\arraycolsep=1.0pt
\begin{array}{llll} \mathcal{X} &= {\{\mathcal{X}_i\}}_{i=1}^m &:& \mathcal{X}_i\subset \mathbb{R}^{n_i}\\[2mm]
\mathcal{F} &= {\{\mathcal{F}_i\}}_{i=1}^m &:& \mathcal{F}_i=\left\{\dot{\vec{x}}_i= \vec f_i(\vec x_i)\right\}
\\[2mm]
\mathcal{E} &= {\{\mathcal{E}_i^{i+1}\}}_{i=1}^m &:& \mathcal{E}_i^{i+1}=\\[1mm]
  &  &  &\left\{\vec x_i\in \mathcal{X}_i\Bigg\vert \begin{array}{l}e_i^{i+1}(\vec{x}_i)=0,\\[1mm]\dot e_i^{i+1}(\vec{x}_i)<0\end{array}\right\} \\[5mm]
\mathcal{D} &= {\{\mathcal{D}_i^{i+1}\}}_{i=1}^m &:& \mathcal{D}_i^{i+1}=\\[1mm]
  &  &  &\left\{\begin{array}{l}\vec{x}_{i+1}^{+}= \vec\Delta_i^{i+1}(\vec x_{i}^-),\\[1mm]\vec x_i^-\in \mathcal{E}_i^{i+1},\\[1mm]\vec x_{i+1}^+\in \mathcal{X}_{i+1}\end{array}\right\},
\end{array}
\right..
\end{align*}}
For compactness, we represent the hybrid model as a tuple~$\Sigma =\left(\mathcal{X},\mathcal{F},\mathcal{E},\mathcal{D}\right)$.
{
\begin{definition}[Recurrent Hybrid Trajectory]
    A trajectory function~$\vec{x}: \mathcal{T} \rightarrow \mathcal{X}_1$ that begins and ends within~$\mathcal{X}_1$, while sequentially passing through all %
    phases in the fixed order~$1 \to 2 \to \dots \to m \to 1$, is termed a recurrent hybrid trajectory.
    By denoting its initial state as~$\vec{x}_0\in\mathcal{X}_1$, the trajectory of a complete cycle is recursively defined as
    \begin{subequations}\label{eq:hybridFlow}
    \begin{flalign}
        &\vec x(t) = \vec{\varphi}_1\left(t-t^{\mathcal{E}},\bar{\vec{x}}_{m+1}\right),&\label{eq:flow}\\
    &\bar{\vec{x}}_{i+1} = \vec{\Delta}_i^{i+1}\circ\vec{\varphi}_{i}\left(t_{i}^\mathcal{E}(\bar{\vec{x}}_{i}),\bar{\vec{x}}_{i}\right),~  i=1,\dots,m,&\label{eq:recursion}\raisetag{12pt}\\
    &\bar{\vec{x}}_1=\vec x_0, &
    \end{flalign}
    \end{subequations}
    where~$t\in\mathcal{T}$ is a time value for which the system is in the last phase~$m+1$ of a cycle.
    The corresponding interval is defined as~{$\mathcal{T}:=\big[t^\mathcal{E},t^\mathcal{E}+t_1^\mathcal{E}(\bar{\vec{x}}_{m+1})\big)$} with the accumulated event times~$t^{\mathcal{E}}:=\Sigma_{i=1}^m t_i^{\mathcal{E}}(\bar{\vec{x}}_{i})$.
\end{definition}
A graphical interpretation of \eqref{eq:hybridFlow} is illustrated in Fig.~\ref{fig:hybridFlow}.
In constructing a recurrent hybrid trajectory, we chose to focus on the local properties of~$\Sigma$, rather than the general existence and uniqueness of solutions as addressed in \cite{Goebel2009,Burden2016,Johnson2016}.
This local perspective is sufficient for the theoretical development in the following sections. 
However, it does require additional assumptions about the trajectory.
\begin{assumption}[Recurrent Hybrid Trajectory]
A recurrent hybrid trajectory~$\vec{x}(t)$, as defined by \eqref{eq:hybridFlow}, satisfies the following conditions:\\[-5mm]
\begin{enumerate}[{As}1]
\itemsep0.3em 
\item It is unique in forward time and yields finite time-to-event values, ensuring that $t_{i}^\mathcal{E}(\bar{\vec{x}}_{i})<t_i^{\text{max}}$ holds within each phase~$i$.
\item It transversally intersects each event manifold~$\mathcal{E}_i^{i+1}$.
\item It satisfies~$\overline{\vec\Delta_{i}^{i+1}(\mathcal{E}_{i}^{i+1})}\cap\mathcal{E}_{i+1}^{i+2}=\emptyset$ in all phases, where~$\overline{\vec\Delta_{i}^{i+1}(\cdot)}$ denotes the closure of the set~$\vec\Delta_{i}^{i+1}(\cdot)$.
\end{enumerate}
\end{assumption}
Assumption~\textit{As1} ensures a unique mapping of the reset map~$\vec{\Delta}_i^{i+1}$, complementing the previously established uniqueness of the phase flows~$\vec{\varphi}_i$ over their respective intervals~$[0,t_i^{\text{max}})$.
Additionally, the condition~$t_{i}^\mathcal{E}(\bar{\vec{x}}_{i})<t_i^{\text{max}}$ guarantees that each event manifold is reached within finite time, thereby resulting in a finite duration~$t \in \mathcal{T}$ for the recurrent hybrid trajectory~$\vec{x}(t)$.
Assumption~\textit{As2} is already implicitly satisfied by the definition of the event manifolds~$\mathcal{E}$ in the hybrid model~$\Sigma$, where 
\begin{align}\label{eq:noGrazing}
    \dot e_i^{i+1}(\vec{x}_i^-)=\mr{d} e_{i}^{i+1}(\vec x_i^-)\cdot\vec f_i(\vec x_i^-)< 0,
\end{align}
for each~$\vec x_i^-\in\mathcal{E}_{i}^{i+1}$.
Loosely speaking, the inequality~\eqref{eq:noGrazing} requires the vector field~$\vec{f}_i$ to point outward from the event manifold~$\mathcal{E}_{i}^{i+1}$.
This condition ensures that trajectories in phase~$i$ do not graze the event manifold surface~\cite{Goebel2009}. 
Similar to hypotheses HSH5 in~\cite{Westervelt2018}, Assumption \textit{As3} ensures that the result of a reset does not immediately trigger another reset, thereby preventing simultaneous multi-events.
This assumption further implies that the trajectory~$\vec{x}(t)$ traverses all phases~$i$ with a non-zero duration~$t_{i}^\mathcal{E}(\bar{\vec{x}}_{i}) > 0$.
\begin{figure}[t]
    \centering
    \includegraphics{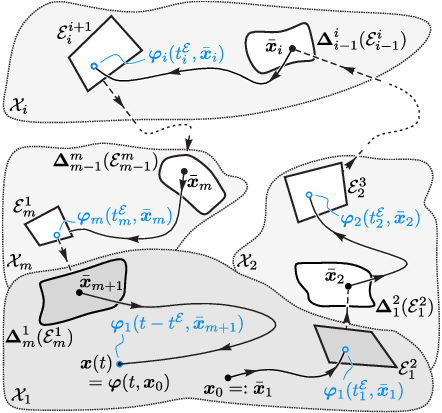}
    \caption{{}Graphical illustration of a {recurrent hybrid trajectory} of~$\Sigma$ that starts and ends in the same phase~$\mathcal{X}_1$. While all components of a HDS~$\Sigma =\left(\mathcal{X},\mathcal{F},\mathcal{E},\mathcal{D}\right)$ are recurrent, e.g., $\vec{f}_{m+1}=\vec{f}_{1}$ and~$\vec{\varphi}_{m+1}=\vec{\varphi}_{1}$, the transition states~$\bar{\vec{x}}_{i}$ are trajectory specific. For this reason, the states~$\bar{\vec{x}}_{m+1}$ and~$\bar{\vec{x}}_{1}$ are different points in general.}
    \label{fig:hybridFlow}
\end{figure}
\begin{remark}\label{remark:implicitFunctionTheorem}
    The accumulation of event times~$t^\mathcal{E}$ is well defined based on Assumptions \textit{As1}-\textit{As3}. 
    By considering \eqref{eq:noGrazing} and employing the implicit function theorem, the value of~$t^\mathcal{E}$ is determined solely by the initial state~$\vec{x}_0$ as it progresses through the hybrid trajectory~$\vec{x}(t)$. This dependency is particularly evident when analyzing the recursive structure of \eqref{eq:recursion}. Consequently, the initial state~$\bar{\vec{x}}_i$ of each phase is influenced by the overall initial state~$\vec{x}_0\in\mathcal{X}_1$. To reflect this relationship, we occasionally denote the accumulated event times as~$t^\mathcal{E}(\vec{x}_0)$ and the initial state of a phase as~$\bar{\vec x}_i(\vec{x}_0)$.
\end{remark}
{
With Assumptions~\textit{As1}-\textit{As3}, we can now locally analyze the behavior of a recurrent hybrid trajectory~$\vec{x}(t)$ of~$\Sigma$.
\begin{definition}[Recurrent Hybrid Flow]
    Given a recurrent hybrid trajectory~$\vec{x}(t)$ and under Assumptions~\textit{As1}-\textit{As3}, we locally define a recurrent hybrid flow as~$\vec \varphi: \mathcal{T} \times \mathcal{X}_0 \rightarrow \mathcal{X}_1$. 
    Specifically, within a neighborhood~$\mathcal{X}_0 \subset \mathcal{X}_1$ of the initial state~$\vec{x}_0 \in \mathcal{X}_0$, the trajectory is given by~$\vec{x}(t) := \vec{\varphi}(t, \vec{x}_0)$, where~$t \in \mathcal{T}$.
\end{definition}}
\subsection{Flow Derivatives}\label{sec:flowDiff}
In the following, we discuss the structure of phase and hybrid flow derivatives with respect to initial states $\bar{\vec{x}}_{i}$. 
In particular, we use Assumptions~\textit{As1}-\textit{As3} to indicate that the hybrid flow $\vec{\varphi}(t,\vec{x}_0)$ is continuously differentiable in both arguments (Remark~5~\cite{Grizzle2014}).

Given that~$\vec{f}_i\in C^1$, the phase flow~$\vec \varphi_i(t,\bar{\vec{x}}_{i})$ is continuously differentiable with respect to the initial state~$\bar{\vec{x}}_{i}\in\mathcal{X}_i$ (§32.6~\cite{Arnold1992}).
Consequently, the so-called fundamental matrix~$\mat{\Phi}_i$ for a phase~$i$ is well-defined by
\begin{align}
\mat{\Phi}_i\left(t,\bar{\vec{x}}_{i}\right) = \frac{\partial \vec{\varphi}_i\left(t,\vec{x}_i\right)}{\partial \vec{x}_i}\bigg\vert_{(t,\bar{\vec{x}}_{i})},~t\in\left[0,t_i^{\mathcal{E}}\right).\label{eq:phasePhi}
\end{align}
The composition property\footnote{\eqref{eq:shiftFundamental} is referred to as transition property in (Chapter~7~\cite{Leine2004}).} \cite{Dieci2011} for autonomous ODEs directly follows from a time shift $\tau$, where $0\leq\tau\leq t<t_i^\mathcal{E}$, such that 
\begin{align}\label{eq:shiftFundamental}
\mat{\Phi}_i\left(t,\bar{\vec{x}}_{i}\right) = \mat{\Phi}_i\left(t-\tau,\vec{x}_i(\tau)\right) \cdot\mat{\Phi}_i\left(\tau,\bar{\vec{x}}_{i}\right).
\end{align}
A similar chained structure appears when we compute state derivatives at time $t\in (t_i^\mathcal{E},t_i^\mathcal{E}+t_{i+1}^\mathcal{E})$ over a phase transition of $\Sigma$ (Appendix~\ref{sec:appendSaltation}):
\begin{align}\label{eq:chainedHybridDiff}
\frac{\mr{d}}{\mr{d}\vec{x}_i}\vec{\varphi}_{i+1}(t,\vec{x}_{i})\bigg\vert_{(t,\bar{\vec{x}}_{i})}
=&~\mat{\Phi}_{i+1}(t-t_i^\mathcal{E},\bar{\vec{x}}_{i+1})\notag\\
&~\cdot\mat{S}_i^{i+1}\cdot\mat{\Phi}_i(t_i^\mathcal{E},\bar{\vec{x}}_{i}),
\end{align}
where $\mat{S}_i^{i+1}$ is the so-called saltation matrix \cite{Leine2004,Burden2016,Kong2024}, which describes how perturbations of a trajectory are carried over from phase $i$ into $i+1$.
With the shorthand notation
\begin{align*}
&\vec{x}_i^-:=\vec{x}_i(t_i^\mathcal{E}(\bar{\vec{x}}_i)),\quad\vec{x}_{i+1}^+:=\bar{\vec{x}}_{i+1}\overset{\eqref{eq:recursion}}{=}\mat{\Delta}_i^{i+1}( \vec{x}_i^-),\\
&\vec{f}_i^{-}:=\vec{f}_i(\vec{x}_i^-),\quad \vec{f}_{i+1}^{+}:=\vec{f}_{i+1}(\vec{x}_{i+1}^+),\\
&\mat{D}_i^{i+1}:=\frac{\partial \mat{\Delta}_i^{i+1}(\vec{x}_i)}{\partial \vec{x}_i}\bigg\vert_{\vec{x}_i^-},~\mr{d} e_{i}^{i+1}:=\frac{\partial e_i^{i+1}(\vec{x}_i)}{\partial \vec{x}_i}\bigg\vert_{\vec{x}_i^-},
\end{align*}
the saltation matrix takes the form
\begin{align}\label{eq:Saltation}
\mat{S}_i^{i+1}=\mat{D}_{i}^{i+1} +\frac{\left(\vec{f}_{i+1}^+ - \mat{D}_{i}^{i+1}\vec{f}_i^-\right) \mr{d} e_{i}^{i+1}}
    {\mr{d} e_{i}^{i+1}\vec{f}_i^-}.
\end{align} 
The saltation matrix~\eqref{eq:Saltation} can be derived either geometrically or through calculus \cite{Mueller1995, Leine2004, Kong2024}.
In Appendix~\ref{sec:appendSaltation}, we present the latter derivation, which utilizes the chain rule and the implicit function theorem (similar to \cite{Kong2024}).
Note that Assumptions~\textit{As1}-\textit{As3} collectively ensure the well-defined nature of the saltation matrix $\mat{S}_i^{i+1}$ \cite{Bizzarri2016}. Specifically, Assumption~\textit{As2} ensures that $\dot{e}_i^{i+1}\neq 0$ in the denominator of~\eqref{eq:Saltation}.
Hence, exploiting \eqref{eq:shiftFundamental} and \eqref{eq:chainedHybridDiff} in \eqref{eq:hybridFlow}, the hybrid fundamental matrix~$\mat{\Phi}$ is well-defined for any $t\in\mathcal{T}$ and {$\vec x_0\in \mathcal{X}_0$} (Section~7.1.4~\cite{Burden2016}):
\begin{subequations}\label{eq:fundamentalMatrixHybrid}
\begin{align}
    \mat{\Phi}(t,\vec{x}_0) &=
      \frac{\partial \vec{\varphi}\left(t,\vec{x}\right)}{\partial \vec{x}}\bigg\vert_{(t,\vec{x}_0)}\\&=\mat{\Phi}_{1}(t-t^\mathcal{E},\bar{\vec{x}}_{m+1})\bar{\mat{\Phi}}_{m+1},\\
\bar{\mat{\Phi}}_{i+1} &= \mat{S}_i^{i+1} \mat{\Phi}_i(t_i^\mathcal{E},\bar{\vec{x}}_{i}),~~  i=1,\dots,m.\label{eq:recursionPhi} 
\end{align}
\end{subequations}
\begin{remark}
    While the fundamental matrix $\mat{\Phi}_i(t,\bar{\vec{x}}_{i})$ of a single phase $i$ is non-singular for all $t\in[0,t_i^{\mathcal{E}})$ (Lemma~7.1~\cite{Leine2004}), the hybrid fundamental matrix~$\mat{\Phi}(t,\vec{x}_0)$ can be singular depending on the projections of the saltation matrices $\mat{S}_{i}^{i+1}$ (Theorem~3.1~\cite{Bizzarri2016}).
    Hence, while hybrid flows~$\vec{\varphi}$ are unique in forward time~(\textit{As1}), a solution of $\Sigma$ is in general not unique in the reverse time direction. 
\end{remark}
\subsection{First Integrals and Symmetries}\label{sec:firstInt}
We further address conservation properties of these hybrid dynamical systems, starting again with a single phase.
The dynamics $\mathcal{F}_i$ of a single phase is said to be conservative if there exists a non-trivial\footnote{Trivial first integrals $H_i$ are constants with vanishing derivatives, i.e., $\mr{d}H_i\equiv\vec{0}$.}
first integral $H_i:\mathcal{X}_i\to \mathbb{R}$ of class~$C^1$, which is constant along solutions~$\vec{x}_i(t)$ \cite{Arnold1992,Pontryagin1962,Sepulchre1997}.
This yields
\begin{align}
H_i(\vec{x}_i(t))&=\text{const.} \quad \forall t\in [0,t_i^{\mathcal{E}}),\notag\\
    \overset{\tfrac{\mr{d}}{\mr{d}t}}{\Rightarrow} \dot{H}_i(\vec x_i(t))&=\mr{d}H_i(\vec x_i(t))\cdot\vec f_i(\vec x_i(t)) = 0,\label{eq:orthogonalFirstIntegral}
\end{align}
where $\mr{d} H_i(\vec x_i(t)) = \nicefrac{\partial H_i(\vec x)}{\partial \vec{x}}\vert_{\vec{x}_i(t)}$ and the initial condition $\bar{\vec{x}}_i$ is chosen arbitrarily in $\mathcal{X}_i$.
Given a conserved quantity $H_i$, $\vec{f}_i$ and $\mathcal{F}_i$ are also referred to as a conservative vector field and conservative dynamics, respectively.
\begin{remark}
    For a dynamical system and its solutions to be conservative, \eqref{eq:orthogonalFirstIntegral} must hold for all~$\vec{x}_i\in\mathcal{X}_i$.
    On the other hand, for a non-conservative system, it is still possible to find local first integrals, for which particular solutions~$\vec{x}_i(t)$ are conservative.
    In the context of periodic orbits, these solutions are typically isolated limit cycles~\cite{Christopher2007,Chatterjee2002,Gomes2005}.
\end{remark}

A first integral~$H_i$ of {conservative dynamics}~$\mathcal{F}_i$ is generally not unique, since~$\tilde{H}_i=\alpha H_i$ with~$\alpha\in\mathbb{R}\backslash\{0\}$ is also a first integral. 
Only if there exists another first integral~$\tilde{H}_i$, with~$\mr{d}\tilde{H}_i$ being pointwise linearly independent of~$\mr{d}H_i$,~$\mathcal{F}_i$ is said to have more than one (functionally) \emph{independent} first {integral}~\cite{Munoz-Almaraz2003}.
We indicate multiple~($k_i>1$) independent first integrals in phase~$i$ by an additional subscript, i.e., $\{H_{i,j}\}_{j=1}^{k_i}$.
That is, $H_{i,1}:=H_i$, $H_{i,2}:=\tilde{H}_i$, etc.
\begin{remark}\label{remark:trivialf}
     The dimension~$n_i$ of each phase domain~$\mathcal{X}_i$ must be strictly greater than~$k_i$. With~$n_i=k_i$, the first integrals~$\{H_{i,j}\}_{j=1}^{k_i}$ would completely define the phase dynamics and yield the trivial vector field~$\vec{f}_i\equiv \vec{0}$.
     This can be easily shown, since all~$H_{i,j}$ are independent and~$\mr{d}H_{i,j}\vec{f}_i=0$ holds for all~$j\in\{1,\dots,k_i=n_i\}$ at any time~$t$.
\end{remark}
\begin{definition}[Conservative
Hybrid Dynamical System]
The hybrid system~$\Sigma$ is a conservative HDS~(cHDS) with a hybrid first integral~$\mathcal{H}:=\{H_i\}_{i=1}^m$ if
\begin{enumerate}[{Df}1]
    \item each~$H_i:\mathcal{X}_i\to \mathbb{R}$ is a {continuously} differentiable non-trivial first integral of~$\mathcal{F}_i$ and
    \item it holds that~$H_{i+1}(\vec{x}_{i+1}^+) = H_{i}(\vec{x}_{i}^-)$ for all reset maps~$\vec{x}_{i+1}^+= \vec{\Delta}_i^{i+1}(\vec{x}_{i}^-)$ with~$\vec x_i^-\in \mathcal{E}_i^{i+1}$.
\end{enumerate}
\end{definition}
Note that similar to the recurrent hybrid flow, where~$\vec{f}_{m+1}=\vec{f}_{1}$, it holds~$H_{m+1}=H_{1}$.
The definition of a conservative HDS implies that for any {$\vec{x}_0\in\mathcal{X}_0$} its first integral~$H_1$ in phase~$i=1$ is invariant under the hybrid flow~$\vec{\varphi}(t,\vec{x}_0)$ for all times~$t\in\mathcal{T}$.
Furthermore, the hybrid first integral~$\mathcal{H}$ only contains first integrals~$H_i$ that also fulfill Definition~\textit{Df2}.
Loosely speaking, not all first integrals~$\{H_{i,j}\}_{j=1}^{k_i}$ of phase~$i$ are also first integrals of the conservative HDS.
{
\begin{lemma}\label{lemma:NH}
    \interlinepenalty=10000 %
    The number~$k_\mathcal{H}$ of independent first integrals~$\{H_{i,j}\}_{j=1}^{k_\mathcal{H}}$, satisfying definitions \textit{Df1} and \textit{Df2}, remains constant across hybrid phases~$i$.
    In particular, it holds~$k_\mathcal{H}\leq \{k_i\}_{i=1}^m$.
\end{lemma}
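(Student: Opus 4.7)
The plan is to establish rank invariance of the differentials $\{\mr{d}H_{i,j}\}_j$ across phases by constructing an explicit ``transport'' relation at each event transition, together with flow invariance within each phase. The key identity I would prove first is that at the event point $\vec{x}_i^-$, with $\vec{x}_{i+1}^+=\vec{\Delta}_i^{i+1}(\vec{x}_i^-)$,
\begin{equation*}
\mr{d}H_{i+1,j}\big|_{\vec{x}_{i+1}^+}\cdot\mat{S}_i^{i+1} \;=\; \mr{d}H_{i,j}\big|_{\vec{x}_i^-},
\end{equation*}
where $\mat{S}_i^{i+1}$ is the saltation matrix~\eqref{eq:Saltation}.

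To prove this identity, I would use the splitting $T_{\vec{x}_i^-}\mathcal{X}_i = T_{\vec{x}_i^-}\mathcal{E}_i^{i+1}\oplus\mathrm{span}(\vec{f}_i^-)$, which is valid by Assumption~\textit{As2}, and check the identity on each summand. On the event-manifold tangent, differentiating Df2 yields $\mr{d}H_{i+1,j}\mat{D}_i^{i+1}\vec{v} = \mr{d}H_{i,j}\vec{v}$ for $\vec{v}\in T\mathcal{E}_i^{i+1}$, while a direct computation with~\eqref{eq:Saltation} shows that $\mat{S}_i^{i+1}$ coincides with $\mat{D}_i^{i+1}$ on this subspace (the rank-one correction term vanishes because $\mr{d}e_i^{i+1}\vec{v}=0$). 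On the flow direction, the same computation yields $\mat{S}_i^{i+1}\vec{f}_i^-=\vec{f}_{i+1}^+$, while Df1 gives $\mr{d}H_{i+1,j}\vec{f}_{i+1}^+ = 0 = \mr{d}H_{i,j}\vec{f}_i^-$, so both sides of the identity vanish when applied to $\vec{f}_i^-$.

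With the identity in hand, any linear relation $\sum_j c_j\,\mr{d}H_{i+1,j}|_{\vec{x}_{i+1}^+}=0$ can be right-multiplied by $\mat{S}_i^{i+1}$ to obtain $\sum_j c_j\,\mr{d}H_{i,j}|_{\vec{x}_i^-}=0$. Contrapositively, independence in phase~$i$ propagates to independence in phase~$i+1$ at the corresponding event points, so $k_\mathcal{H}^{(i+1)}\geq k_\mathcal{H}^{(i)}$. To transfer independence from event points to arbitrary points within a phase, I would differentiate $H_{i,j}\circ\vec{\varphi}_i\equiv H_{i,j}$, giving $\mr{d}H_{i,j}|_{\vec{\varphi}_i(t,\bar{\vec{x}}_i)}\,\mat{\Phi}_i(t,\bar{\vec{x}}_i) = \mr{d}H_{i,j}|_{\bar{\vec{x}}_i}$; non-singularity of $\mat{\Phi}_i$ within each phase (noted after~\eqref{eq:recursionPhi}) preserves the rank along flows. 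Closing the cycle $1\to 2\to\dots\to m\to 1$ turns the chain $k_\mathcal{H}^{(1)}\leq k_\mathcal{H}^{(2)}\leq\dots\leq k_\mathcal{H}^{(m)}\leq k_\mathcal{H}^{(1)}$ into equalities, so $k_\mathcal{H}^{(i)}$ is independent of $i$.

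The upper bound $k_\mathcal{H}\leq k_i$ is immediate: by Df1 each $H_{i,j}$ is a first integral of $\mathcal{F}_i$, and at most $k_i$ such functions can be functionally independent by the definition of $k_i$ in Section~\ref{sec:firstInt}. The main technical obstacle is the transport identity: the three ingredients (differentiated Df2, flow invariance from Df1, and the explicit structure of $\mat{S}_i^{i+1}$) must conspire so that the identity holds on the \emph{full} tangent space rather than merely on $T\mathcal{E}_i^{i+1}$, and this relies crucially on Assumption~\textit{As2} supplying a complementary direction to the event manifold.
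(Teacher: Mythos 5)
Your proof is correct, but it takes a genuinely different route from the paper's. The paper argues in essentially one sentence from forward-time uniqueness (Assumption~\textit{As1}): two independent first integrals of phase~$i$ cannot both be connected by Definition~\textit{Df2} to one and the same first integral of phase~$i+1$, so the count of independent integrals cannot change across a transition. You instead prove a quantitative transport identity, $\mr{d}H_{i+1,j}\vert_{\vec{x}_{i+1}^+}\mat{S}_i^{i+1}=\mr{d}H_{i,j}\vert_{\vec{x}_i^-}$, by splitting $T_{\vec{x}_i^-}\mathcal{X}_i=T_{\vec{x}_i^-}\mathcal{E}_i^{i+1}\oplus\mathrm{span}(\vec{f}_i^-)$ (valid by \textit{As2}) and checking it on each summand; this is exactly the covector relation the paper only establishes later, in the proof of Lemma~\ref{theoremHybrid}, and your use of it here is not circular since that lemma does not rely on Lemma~\ref{lemma:NH}. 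Your verification on the two summands is sound: the rank-one correction in \eqref{eq:Saltation} annihilates $T\mathcal{E}_i^{i+1}$ so differentiated \textit{Df2} applies there, while $\mat{S}_i^{i+1}\vec{f}_i^-=\vec{f}_{i+1}^+$ together with \eqref{eq:orthogonalFirstIntegral} handles the flow direction. Right-multiplication by $\mat{S}_i^{i+1}$ then pulls any linear dependency at $\vec{x}_{i+1}^+$ back to one at $\vec{x}_i^-$, the flow relation \eqref{eq:phaseHprop} with nonsingular $\mat{\Phi}_i$ propagates rank within a phase, and the cyclic chain of inequalities closes to equality; the bound $k_\mathcal{H}\leq k_i$ is immediate from \textit{Df1}. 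What your route buys is rigor and reuse of the machinery already needed for \eqref{eq:hybridHprop} and Theorem~\ref{thm:continuance}; what the paper's route buys is brevity. One caveat applies to both arguments, but is more visible in yours: the construction certifies pointwise linear independence only at the event points and their flow images, i.e., along recurrent trajectories rather than on all of $\mathcal{X}_{i+1}$, which matches how the lemma is used in Section~\ref{sec:ConservativeOrbits} but is formally weaker than the blanket notion of functional independence introduced in Section~\ref{sec:firstInt}.
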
}
\begin{proof}
    Under Assumption~\textit{As1}, recurrent hybrid flows are unique in forward time.
    Hence, there can not be two independent first integrals~$H_{i,1}$, $H_{i,2}$ that connect to a single first integral~$H_{i+1}$ by Definition~\textit{Df2}. That is, if phase~$i$ has~$k_\mathcal{H}$ independent first integrals that fulfill Definition~\textit{Df2}, phases~$i-1$ and $i+1$ also have~$k_\mathcal{H}$ independent first integrals.
\end{proof}
{We denote a conservative hybrid dynamical system (cHDS) by~$\Sigma^\mathcal{H}$ to emphasize the presence of a hybrid first integral~$\mathcal{H}$ within~$\Sigma$.}
\begin{remark}\label{remark:constructIntegral}
For a given dynamical system, identifying first integrals is a challenging task that remains an active area of research~\cite{Naz2014,Gorni2022}. 
Although this is beyond the scope of this paper, it is worth mentioning that one significant approach involves utilizing continuous symmetries~$\vec{\theta}:\mathcal{X}_1\to\mathcal{X}_1$~\cite{Lamb1998}.
These symmetries obey to the equation~$\vec{\theta}\circ \vec{\varphi}(t,\vec{x}_0)=\vec{\varphi}(t,\vec{\theta}\circ\vec{x}_0)$ within $\Sigma$, and their existence implies the presence of a first integral, as indicated by Noether's theorem \cite{Gorni2021}.
Of particular importance among these symmetries are so-called cyclic variables, which generate first integrals of generalized {momenta} in Hamiltonian systems \cite{Colombo2020,Ames2006a,Ames2006b}.
\end{remark}
\subsection{Transition Properties}\label{sec:transProp}
In the following, we restate known results from autonomous ODEs with first integrals \cite{Hartman2002,Arnold1992,Leine2004,Dieci2011} and show that they also hold for cHDSs $\Sigma^\mathcal{H}$.
\begin{lemma}\label{lemmaFlow}
For any conservative $\mathcal{F}_i$, with first integral $H_i:\mathcal{X}_i\to\mathbb{R}$ and its {phase trajectory}~$\vec x_i(t)$, emanating from $\bar{\vec{x}}_i$, with ${t\in[0,t_i^{\mathcal{E}})}$, it holds
\begin{subequations}
\begin{align}
    \vec f_i(\vec x_i(t))&=\mat{\Phi}_i(t,\bar{\vec{x}}_i)\vec f_i(\bar{\vec{x}}_i),\label{eq:phaseFprop}\\
    \mr{d}H_i(\bar{\vec{x}}_i)&=\mr{d}H_i({\vec x_i(t)})\mat{\Phi}_i(t,\bar{\vec{x}}_i).\label{eq:phaseHprop}
\end{align}
\end{subequations}
\end{lemma}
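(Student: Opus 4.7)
The plan is to prove the two identities separately. Neither requires deep machinery: identity \eqref{eq:phaseFprop} is a consequence of autonomy alone, while \eqref{eq:phaseHprop} follows from conservation together with the chain rule. Both require only that $\vec{f}_i\in C^1$ and $H_i\in C^1$ over $\mathcal{X}_i$, which is already guaranteed in Section~\ref{sec:hybridDyn} and by Definition~\textit{Df1}.

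For \eqref{eq:phaseFprop}, I would use the composition (semigroup) property of the autonomous phase flow: for any $s\in[0,t_i^\mathcal{E}-t)$,
\begin{align*}
\vec{\varphi}_i(t+s,\bar{\vec{x}}_i) = \vec{\varphi}_i\bigl(t,\vec{\varphi}_i(s,\bar{\vec{x}}_i)\bigr).
\end{align*}
Differentiating both sides with respect to $s$ at $s=0$ and applying the chain rule, the left-hand side yields $\vec{f}_i(\vec{x}_i(t))$, while the right-hand side yields $\mat{\Phi}_i(t,\bar{\vec{x}}_i)\vec{f}_i(\bar{\vec{x}}_i)$ (using $\partial_s\vec{\varphi}_i(s,\bar{\vec{x}}_i)\vert_{s=0}=\vec{f}_i(\bar{\vec{x}}_i)$ and the definition \eqref{eq:phasePhi}). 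An equivalent route, should the composition property not be desired as a prerequisite, is to observe that both $t\mapsto \vec{f}_i(\vec{x}_i(t))$ and $t\mapsto \mat{\Phi}_i(t,\bar{\vec{x}}_i)\vec{f}_i(\bar{\vec{x}}_i)$ satisfy the variational equation $\dot{\vec{z}}=\mr{d}\vec{f}_i(\vec{x}_i(t))\vec{z}$ with the same initial value $\vec{f}_i(\bar{\vec{x}}_i)$, so they coincide by uniqueness of solutions to the linear Cauchy problem.

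For \eqref{eq:phaseHprop}, I would exploit that conservation \eqref{eq:orthogonalFirstIntegral} is a \emph{pointwise} identity on $\mathcal{X}_i$, so the invariance $H_i(\vec{\varphi}_i(t,\bar{\vec{x}}_i))=H_i(\bar{\vec{x}}_i)$ holds for \emph{every} initial condition $\bar{\vec{x}}_i\in\mathcal{X}_i$ and every $t\in[0,t_i^\mathcal{E})$. Differentiating this identity with respect to $\bar{\vec{x}}_i$, the left-hand side yields $\mr{d}H_i(\vec{x}_i(t))\,\mat{\Phi}_i(t,\bar{\vec{x}}_i)$ by the chain rule, and the right-hand side yields $\mr{d}H_i(\bar{\vec{x}}_i)$, giving exactly \eqref{eq:phaseHprop}.

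The proofs are essentially routine, so there is no real obstacle. The only subtlety worth flagging is the one in the previous paragraph: conservation must be used as an identity on the open set $\mathcal{X}_i$ rather than merely along a single trajectory, since we need to differentiate with respect to the initial condition. This is exactly what \eqref{eq:orthogonalFirstIntegral} provides, because $\mr{d}H_i\cdot\vec{f}_i\equiv 0$ everywhere on $\mathcal{X}_i$ implies $H_i$ is constant along every trajectory starting in $\mathcal{X}_i$, producing a neighborhood-valid identity that admits differentiation in $\bar{\vec{x}}_i$.
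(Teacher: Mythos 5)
Your proposal is correct and follows essentially the same route as the paper: identity \eqref{eq:phaseFprop} is obtained by differentiating the composition (time-shift/semigroup) property of the autonomous flow in the shift parameter and evaluating at zero—your version with $\vec{\varphi}_i(t+s,\bar{\vec{x}}_i)=\vec{\varphi}_i(t,\vec{\varphi}_i(s,\bar{\vec{x}}_i))$ at $s=0$ is just a reparameterization of the paper's argument with $\vec{x}_i(t)=\vec{\varphi}_i(t-\tau,\vec{\varphi}_i(\tau,\bar{\vec{x}}_i))$ at $\tau=0$—and identity \eqref{eq:phaseHprop} is proved exactly as in the paper, by differentiating the neighborhood-valid invariance $H_i(\vec{\varphi}_i(t,\bar{\vec{x}}_i))=H_i(\bar{\vec{x}}_i)$ with respect to $\bar{\vec{x}}_i$ at fixed $t$. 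Your side remark that one could instead invoke uniqueness for the variational equation is a valid alternative, and your flagged subtlety (conservation must hold on all of $\mathcal{X}_i$, not just along one trajectory) is precisely what the paper's proof uses implicitly.
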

\begin{proof}
For a fixed initial condition~$\bar{\vec{x}}_i$, we can rewrite the autonomous flow~$\vec{\varphi}_i$ at any $t\in[0,t_i^\mathcal{E})$ using a time shift $\tau$ (§9.2~\cite{Arnold1992}) such that
\begin{align*}
\vec{x}_i(t)&=\vec{\varphi}_i(t-\tau,\vec{\varphi}_i(\tau,\bar{\vec{x}}_i)) \quad \forall \tau\in[0,t],\\
\overset{\tfrac{\mr{d}}{\mr{d}\tau}}{\Rightarrow} \vec{0} &= \dfrac{\partial \vec{\varphi}_i}{\partial t}\bigg\vert_{t-\tau}\dfrac{\partial (t-\tau)}{\partial \tau}+\dfrac{\partial \vec{\varphi}_i}{\partial \vec x}\bigg\vert_{(t-\tau,\bar{\vec{x}}_i)}\dfrac{\partial \vec{\varphi}_i}{\partial \tau}\bigg\vert_{\tau},\\
\vec{0} &=-\vec{f}_i(\vec{x}_i(t-\tau))+ \mat{\Phi}_i(t-\tau,\bar{\vec{x}}_i)\vec{f}_i(\vec{x}_i(\tau)).
\end{align*}
For $\tau=0$, this yields property~\eqref{eq:phaseFprop}.
The second property in \eqref{eq:phaseHprop} is derived by considering neighboring phase flows of a first integral $H_i$ and fixing the time $t$:
\begin{align*}
    &H_i(\vec{\varphi}_i(t,\bar{\vec{x}}_i))=H_i(\bar{\vec{x}}_i)\quad \forall \bar{\vec{x}}_i\in\mathcal{X}_i,\\
    &\overset{\tfrac{\mr{d}}{\mr{d}\bar{\vec{x}}_i}}{\Rightarrow} \frac{\partial H_i(\vec{x}_i)}{\partial \vec{x}_i}\bigg\vert_{\vec x_i(t)}\frac{\partial \vec{\varphi}_i(t,\vec{x}_i)}{\partial \vec{x}_i}\bigg\vert_{(t,\bar{\vec{x}}_i)} = \dfrac{\partial H_i(\vec{x}_i)}{\partial \vec x_i}\bigg\vert_{\bar{\vec{x}}_i}. 
\end{align*}
\end{proof}
{
\begin{lemma}\label{theoremHybrid}
For a cHDS $\Sigma^\mathcal{H}$, with first integrals $H_i\in\mathcal{H}$, and its recurrent hybrid trajectory~$\vec x(t)$, with {$t\in\mathcal{T}$}, it holds
\begin{subequations}
\begin{align}
    \vec f_{1}(\vec{x}(t))&=\mat{\Phi}(t,\vec x_0)\vec f_1(\vec x_0),\label{eq:hybridFprop}\\
    \mr{d}H_1(\vec x_0)&=\mr{d}H_1(\vec{x}(t))\mat{\Phi}(t,\vec{x}_0).\label{eq:hybridHprop}
\end{align}
\end{subequations}
\end{lemma}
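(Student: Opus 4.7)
The plan is to reduce the hybrid statement to a chain of single-phase applications of Lemma~\ref{lemmaFlow} linked together by two transition identities for the saltation matrix. Specifically, I would unroll the recursion \eqref{eq:fundamentalMatrixHybrid}, so that $\mat{\Phi}(t,\vec x_0)$ becomes an ordered product of the phase fundamental matrices $\mat{\Phi}_i(t_i^\mathcal{E},\bar{\vec x}_i)$ and saltation matrices $\mat{S}_i^{i+1}$ (with the final $\mat{\Phi}_1(t-t^\mathcal{E},\bar{\vec x}_{m+1})$ sitting on the left), and then propagate $\vec f_1(\vec x_0)$ and $\mr{d}H_1(\vec x(t))$ through this product by induction on the phase index.

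The two transition identities I would establish first are (a) $\mat{S}_i^{i+1}\vec f_i^-=\vec f_{i+1}^+$ and (b) $\mr{d}H_{i+1}(\vec x_{i+1}^+)\mat{S}_i^{i+1}=\mr{d}H_i(\vec x_i^-)$. Identity~(a) is a one-line substitution into \eqref{eq:Saltation}: the correction term collapses to $\vec f_{i+1}^+-\mat{D}_i^{i+1}\vec f_i^-$, which cancels the $\mat{D}_i^{i+1}\vec f_i^-$ contribution and leaves $\vec f_{i+1}^+$. For (b) I would differentiate the reset invariance $H_{i+1}\circ\vec\Delta_i^{i+1}=H_i$ from Df2 at $\vec x_i^-$. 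Because this identity only holds on $\mathcal{E}_i^{i+1}$, the row vector $\mr{d}H_{i+1}(\vec x_{i+1}^+)\mat{D}_i^{i+1}-\mr{d}H_i(\vec x_i^-)$ annihilates $\ker\mr{d}e_i^{i+1}$ and must therefore equal $\lambda\,\mr{d}e_i^{i+1}$ for some scalar $\lambda$. Substituting this together with the pre- and post-event first-integral identities $\mr{d}H_i\vec f_i^-=0$ and $\mr{d}H_{i+1}\vec f_{i+1}^+=0$ from \eqref{eq:orthogonalFirstIntegral} into \eqref{eq:Saltation} causes the $\mr{d}e_i^{i+1}$-contributions to cancel, yielding~(b).

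With (a) and (b) available, the remainder is a routine induction on the phase index. The base case is \eqref{eq:phaseFprop}--\eqref{eq:phaseHprop} applied in phase~$1$ from $\bar{\vec x}_1=\vec x_0$ up to $\vec x_1^-$. In the inductive step, identity~(a) transports $\vec f_i^-$ across the reset to $\vec f_{i+1}^+$, after which Lemma~\ref{lemmaFlow} inside phase $i+1$ propagates the equality up to $\vec x_{i+1}^-$, and analogously for $\mr{d}H$ via~(b). Running this through all $m$ transitions and using $H_{m+1}=H_1$ closes the chain and gives \eqref{eq:hybridFprop}--\eqref{eq:hybridHprop}. I expect the main obstacle to be~(b): since $\vec\Delta_i^{i+1}$ is only defined on the event manifold, $\mat{D}_i^{i+1}$ is extension-dependent and the naive ambient-space identity $\mr{d}H_{i+1}\mat{D}_i^{i+1}=\mr{d}H_i$ fails; the delicate point is verifying that the residual $\lambda\,\mr{d}e_i^{i+1}$ defect is exactly what the saltation construction is designed to absorb.
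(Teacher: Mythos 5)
Your proposal is correct and takes essentially the same route as the paper's proof: reduce, via the chained structure \eqref{eq:recursionPhi} and Lemma~\ref{lemmaFlow}, to the two transition identities $\mat{S}_i^{i+1}\vec f_i^-=\vec f_{i+1}^+$ and $\mr{d}H_{i+1}(\vec x_{i+1}^+)\mat{S}_i^{i+1}=\mr{d}H_i(\vec x_i^-)$, prove the first by direct simplification of \eqref{eq:Saltation}, and obtain the second from differentiating Definition~\textit{Df2} together with \eqref{eq:orthogonalFirstIntegral}. Your handling of the second identity is in fact slightly more careful than the paper's, which asserts $\mr{d}H_{i+1}(\vec x_{i+1}^+)\mat{D}_i^{i+1}=\mr{d}H_i(\vec x_i^-)$ as an ambient identity, whereas you correctly observe that differentiating \textit{Df2} only constrains tangent directions to $\mathcal{E}_i^{i+1}$, leaving a residual $\lambda\,\mr{d}e_i^{i+1}$ that is then exactly absorbed by the saltation correction term --- a refinement of the same argument rather than a different one.
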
}
\begin{proof}
Because of the chained structure of $\mat{\Phi}$ in \eqref{eq:recursionPhi} and the properties from Lemma \ref{lemmaFlow}, it remains to be shown that
\begin{align*}
\vec{f}_{i+1}^+=\mat{S}_i^{i+1}\vec{f}_i^-,\quad \mr{d}H_i(\vec x_i^-)= \mr{d}H_{i+1}(\vec{x}_{i+1}^+)\mat{S}_i^{i+1},
\end{align*}
holds for all $i\in\{1,\dots,m\}$.
The first condition is readily confirmed by simplifying the terms and fractions in \eqref{eq:Saltation}. Similarly, employing \eqref{eq:Saltation} and \eqref{eq:orthogonalFirstIntegral} allows the second condition to be simplified as $\mr{d}H_i(\vec x_i^-)=\mr{d}H_{i+1}(\vec x_{i+1}^+)\mat{D}_i^{i+1}$.
This is demonstrated by computing the total derivative of Definition~\textit{Df2}:
\begin{align*}
    &H_{i+1}\circ\vec{\Delta}_i^{i+1}(\vec{x}_{i}^-) = H_{i}(\vec{x}_{i}^-)\quad \forall \vec{x}_{i}^-\in\mathcal{E}_i^{i+1},\\
    &\overset{\tfrac{\mr{d}}{\mr{d}\vec{x}_i^-}}{\Rightarrow} \mr{d}H_{i+1}(\vec x_{i+1}^+)\mat{D}_i^{i+1} =  \mr{d}H_i(\vec x_i^-). 
\end{align*}
\end{proof}
\subsection{First Recurrence Map}\label{sec:Poincare}
Note that \eqref{eq:phaseFprop} and \eqref{eq:hybridFprop} can be considered as transitions in time since a state remains always on the same flow when there is solely a change in time.
We are, however, interested in distinct recurrent hybrid flows which are obtained from local changes that are transversal to the flow.
For this purpose, we introduce the well-known \emph{Poincaré map} (Chapter~9.1~\cite{Leine2004}) for recurrent hybrid flows.
{
\begin{definition}[Poincaré section]
   Let a continuously differentiable function~$a:\mathcal{X}_1\to \mathbb{R}$ define the set~$\mathcal{A}=\{\vec{x}_1\in\mathcal{X}_1\vert a(\vec{x}_1)=0,\dot{a}(\vec{x}_1)<0\}$. Within a recurrent hybrid trajectory~$\vec{x}(t)$ of $\Sigma$, the Poincaré section~$\mathcal{A}$ is locally defined at points where $\vec{x}(t)\in\mathcal{A}$.
\end{definition}
Similar to \cite{Raff2022a}, we refer to the implicit function~$a$ as the \emph{anchor}.}
An illustration of the Poincaré section~$\mathcal{A}$ is depicted in Fig.~\ref{fig:PonincareSection}.
\begin{figure}[t]
    \centering
    \includegraphics{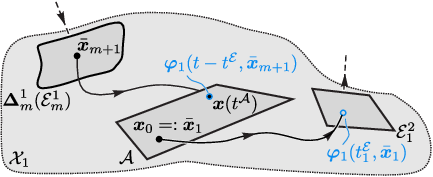}
    \caption{{}Graphical illustration of a recurrent hybrid trajectory in phase $\mathcal{X}_1$ that starts and ends on the Poincaré section~$\mathcal{A}$.}
    \label{fig:PonincareSection}
\end{figure}

\begin{remark}\label{remark:eventanchor}
    Without loss of generality, we here assume that $\mathcal{A}\cap\mathcal{E}_m^1=\emptyset$.
    That is, the \emph{anchor} is not identical to an \emph{event} of the HDS.
    This is different from the definitions in \cite{Grizzle2014,Westervelt2018}, which explicitly use $a := e_m^{1}$.
    How this modifies the derivative of the hybrid flow is discussed in detail in Appendix~\ref{sec:appendixPoincare}.
\end{remark}
With $\vec \varphi(t,\vec{x}_{0})\in\mathcal{A}$, the time~$t\in\mathcal{T}$ is coupled to the initial condition $\vec{x}_0$.
This is easily seen by constructing a \emph{time-to-anchor} function~$t_{m+1}^\mathcal{A}(\bar{\vec{x}}_{m+1}):=\inf\{t\geq 0 \vert \vec\varphi_{1}(t,\bar{\vec{x}}_{m+1})\in\mathcal{A}\}$, which is similar to
the time-to-event functions~$t_i^\mathcal{E}(\bar{\vec{x}}_i)$.
Because of the recursive structure of the hybrid trajectory~\eqref{eq:recursion} and Remark~\ref{remark:implicitFunctionTheorem},
we obtain the overall accumulated time as
\begin{align}\label{eq:accumulationTimeAnchor}
t^\mathcal{A}(\vec{x}_0) :&= t_{m+1}^\mathcal{A}(\bar{\vec{x}}_{m+1})+t^\mathcal{E}(\vec{x}_0)\\
&= t_{m+1}^\mathcal{A}\big(\bar{\vec{x}}_{m+1}(\vec{x}_0)\big)+\sum_{i=1}^m t_i^{\mathcal{E}}\big(\bar{\vec{x}}_{i}(\vec{x}_0)\big).\notag
\end{align}
Furthermore, note that $t^\mathcal{A}(\vec{x}_0)$ is continuously differentiable due to the $C^1$ property of~$\Sigma$ and of the anchor~$a$. 
In the following discussion, and without loss of generality, we consider Poincaré sections~$\mathcal{A}$ for which the accumulated time~$t^{\mathcal{A}}(\vec{x}_0)$ is finite for any point $\vec{x}_0 \in \mathcal{A}_0$, where $\mathcal{A}_0= \mathcal{A}\cap\mathcal{X}_0$.
\begin{definition}[Poincaré map]   
A state~$\vec{x}_0$ from the subset $\mathcal{A}_0=\mathcal{A}\cap\mathcal{X}_0$ is mapped back onto the Poincaré section~$\mathcal{A}$ along the recurrent hybrid trajectory by the Poincaré~map $\vec{P}:\mathcal{A}_0\to\mathcal{A}$, where $\vec{P}(\vec{x}_0) =\vec{\varphi}\big(t^\mathcal{A}(\vec{x}_0),\vec{x}_0\big)$ is also called the \textit{first recurrence map}. 
\end{definition}
Note that, similar to the recurrent hybrid trajectory~$\vec{x}(t)$, the Poincaré map~$\vec{P}\in C^1$ is well defined on the set~$\mathcal{A}_0$.
However, as the time~$t$ was eliminated by the introduction of the Poincaré section~$\mathcal{A}$, there can be no more transitions in time~\eqref{eq:hybridFprop} for the Poincaré map's Jacobian~$\mr{d}\vec{P}(\vec{x}_0)$.
Instead, the Jacobian takes the form
{\begin{align}\label{eq:PoincareJacobian}
\mr{d}\vec{P}(\vec{x}_0)&=\frac{\mr{d}}{\mr{d} \vec{x}_0}\vec{\varphi}\big(t^\mathcal{A}(\vec{x}_0),\vec{x}_0\big)\\
&=\vec{f}_1\left(\vec{x}(t^\mathcal{A})\right)\dfrac{\partial t^\mathcal{A}}{\partial \vec{x}_0}+\mat{\Phi}\left(t^\mathcal{A},\vec{x}_0\right)\notag\\
&= \left(\mat{I}-\frac{\vec{f}_1\left(\vec{x}(t^\mathcal{A})\right)\mr{d} a}{\mr{d} a \cdot\vec{f}_1\left(\vec{x}(t^\mathcal{A})\right) }\right)\mat{\Phi}\left(t^\mathcal{A},\vec{x}_0\right),\notag
\end{align}}     
where $\mr{d} a = \nicefrac{\partial a(\vec x)}{\partial \vec{x}}\vert_{\vec{x}(t^\mathcal{A})}$ and we used the implicit function theorem (see also appendix \eqref{eq:nablaT}) to derive
\begin{align*}
\dfrac{\partial t^\mathcal{A}}{\partial \vec{x}_0}=-\frac{\mr{d} a}{\mr{d} a \cdot\vec{f}_1\left(\vec{x}(t^\mathcal{A})\right) }\mat{\Phi}\left(t^\mathcal{A},\vec{x}_0\right).
\end{align*}
The absence of time-dependent transitions in the Jacobian~$\mr{d}\vec{P}(\vec{x}_0)$ is an expected outcome by introducing the Poincaré section, since the vector field~$\vec{f}_1$ points outside of~$\mathcal{A}$.
Consequently, in the local search for neighboring trajectories~$\vec{x}(t)$, perturbations along the vector field~$\vec{f}_1$ are no longer admissible when using a Poincaré section~$\mathcal{A}$.
\subsection{Fixed Points \& Periodic Orbits}
\label{sec:ConservativeOrbits}
In this section, our focus centers on the examination of fixed points denoted as $\vec{x}_0\in \mathcal{A}_0$ of the Poincaré map $\vec{P}$, i.e., $\vec{x}_0=\vec{P}(\vec{x}_0)$.
The main theoretical contribution of this paper is to show that such a fixed point is not isolated for cHDSs (Theorem~\ref{thm:continuance}). 
To achieve this, we explore admissible perturbations within $\mathcal{A}$, necessitating the utilization of the subsequent Lemma.
\begin{lemma}\label{lemma:nonzerodh}
    Given a fixed point $\vec{x}_0$ of $\vec{P}$ (under Assumptions \textit{As1}-\textit{As3}), it holds that
    \begin{itemize}
        \item the phase vector field $\vec{f}_i$, and
        \item the derivative $\mr{d}H_i$, with $H_i\in\mathcal{H}$,
    \end{itemize}
     are nonzero in all phases $i$.
\end{lemma}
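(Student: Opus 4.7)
The plan is to argue each assertion by contradiction, in each case starting at the initial state $\bar{\vec x}_i$ of each phase and then extending nonvanishing to the remainder of the phase trajectory via Lemma~\ref{lemmaFlow} and the nonsingularity of $\mat{\Phi}_i$.

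For $\vec f_i\neq \vec 0$, I would suppose $\vec f_i(\bar{\vec x}_i)=\vec 0$ for some phase $i$. Local Lipschitz continuity of $\vec f_i$ forces $\vec\varphi_i(t,\bar{\vec x}_i)\equiv\bar{\vec x}_i$, so the phase trajectory is stuck at an equilibrium. For $i\geq 2$, Assumption~\textit{As3} guarantees $\bar{\vec x}_i=\vec\Delta_{i-1}^i(\vec x_{i-1}^-)\notin \mathcal E_i^{i+1}$, hence the event is never reached and $t_i^{\mathcal E}(\bar{\vec x}_i)=\infty$, contradicting Assumption~\textit{As1}. For $i=1$, either $\vec x_0\in\mathcal E_1^2$, in which case Assumption~\textit{As2}'s transversality $\mr d e_1^2\cdot \vec f_1(\vec x_0)<0$ already rules out $\vec f_1(\vec x_0)=\vec 0$, or $\vec x_0\notin\mathcal E_1^2$ and the same equilibrium contradiction with \textit{As1} recurs. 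Extending nonvanishing to the rest of each phase is immediate from $\vec f_i(\vec x_i(t))=\mat{\Phi}_i(t,\bar{\vec x}_i)\vec f_i(\bar{\vec x}_i)$ in \eqref{eq:phaseFprop}.

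For $\mr dH_i\neq\vec 0$, I would again assume $\mr dH_i(\bar{\vec x}_i)=\vec 0$ for some $i$ and use \eqref{eq:phaseHprop} together with the nonsingularity of $\mat{\Phi}_i$ to propagate $\mr dH_i(\vec x_i(t))=\vec 0$ along the whole phase, in particular to the pre-event value $\mr dH_i(\vec x_i^-)=\vec 0$. The reset identity derived in the proof of Lemma~\ref{theoremHybrid}, $\mr dH_i(\vec x_i^-)=\mr dH_{i+1}(\vec x_{i+1}^+)\mat D_i^{i+1}$, combined with the phase-$(i{+}1)$ orthogonality $\mr dH_{i+1}(\vec x_{i+1}^+)\vec f_{i+1}^+=0$ and the rank-one structure of $\mat S_i^{i+1}-\mat D_i^{i+1}$ visible in \eqref{eq:Saltation}, forces $\mr dH_{i+1}(\vec x_{i+1}^+)\mat S_i^{i+1}=\vec 0$ as well. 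Iterating around the cycle using the hybrid recursion \eqref{eq:fundamentalMatrixHybrid} transports this vanishing back to $\vec x_0$.

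The hard part will be closing this into a genuine contradiction: $\mat S_i^{i+1}$ can be singular, so $\mr dH_{i+1}(\vec x_{i+1}^+)\mat S_i^{i+1}=\vec 0$ does not automatically give $\mr dH_{i+1}(\vec x_{i+1}^+)=\vec 0$ at the next initial state. The closing step I would try exploits the pointwise form of Df2 on all of $\mathcal E_i^{i+1}$, together with the nontriviality of $H_i$ from Df1 and the dimension inequality $n_i>k_\mathcal H$ from Remark~\ref{remark:trivialf}, to show that the propagated vanishing of $\mr dH_i$ along the entire closed orbit is incompatible with the non-trivial first-integral structure of $\mathcal H$. Modulo this closing step the argument is routine; everything reduces to ruling out a periodic orbit of a cHDS along which a non-trivial hybrid first integral is critical.
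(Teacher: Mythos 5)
Your argument for the first bullet is sound and matches the paper's reasoning in spirit: the paper likewise gets $\vec f_i\neq\vec 0$ from the finite time-to-event in \textit{As1} (an equilibrium would never reach $\mathcal{E}_i^{i+1}$) together with the transversality in \textit{As2}, and then covers the returning phase $m+1$ via the fixed-point identity $\vec f_{m+1}(\vec x(t^\mathcal{A}))=\vec f_1(\vec x_0)$ (a case you leave implicit, but that is minor).

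For the second bullet, however, there is a genuine gap, and it is exactly the step you flag as "the hard part." Your propagation machinery (invertibility of $\mat{\Phi}_i$ within a phase, the reset identity $\mr{d}H_i(\vec x_i^-)=\mr{d}H_{i+1}(\vec x_{i+1}^+)\mat{D}_i^{i+1}$ across transitions) at best shows that if $\mr{d}H$ vanished somewhere on the orbit it would vanish along the whole closed orbit; it does not produce a contradiction. The closing ingredients you propose — non-triviality from \textit{Df1}, the pointwise form of \textit{Df2}, and $n_i>k_\mathcal{H}$ — cannot suffice: a non-trivial first integral can be critical along an entire periodic orbit of a nowhere-vanishing field (e.g.\ phase dynamics with $\dot{\alpha}=1$, $\dot{y}=0$ and $H=y^2$, which is non-trivial, satisfies the dimension count, yet has $\mr{d}H\equiv\vec 0$ on the orbit $y=0$). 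So "ruling out a periodic orbit along which a non-trivial hybrid first integral is critical" is not routine bookkeeping; it is the actual content of the lemma, and it cannot be extracted from \textit{Df1}, \textit{Df2} and dimensions alone. The paper closes this step quite differently: it does not propagate anything around the loop, but invokes an external pointwise result (Theorem~1.1 of the cited work by Kozlov) stating that $\mr{d}H_i\neq\vec 0$ follows from $\vec f_i\neq\vec 0$ for the first integrals under consideration, so the second bullet is an immediate consequence of the first. Your proposal has no substitute for this imported fact, and without it (or an equivalent additional hypothesis on $\mathcal{H}$) the contradiction you aim for cannot be reached.
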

\begin{proof}
Due to the finite execution time~$t_{i}^\mathcal{E}(\bar{\vec{x}}_{i})$ of phase trajectories~$\vec{x}_i(t)$ (\textit{As1}), which are unique in forward and backward time, as well as the transversality condition~(\textit{As2}), it is ensured that $\vec{f}_i\neq\vec{0}$ holds for all phases $i\in\{1,\dots,m\}$. 
Furthermore, applying the same line of reasoning, we conclude that $\vec{f}_{m+1}$ is also nonzero due to the equality of vector fields $\vec{f}_{m+1}(\vec{x}(t^\mathcal{A}))=\vec{f}_{1}(\vec{x}_0)$ with the fixed point~$\vec{x}_0=\vec{x}(t^\mathcal{A})$.
The property $\mr{d}H_i\neq \vec{0}$ directly follows from $\vec{f}_i\neq\vec{0}$ and Theorem~1.1 in~\cite{Kozlov2020}.
\end{proof}
\begin{theorem}\label{thm:continuance}
Given a cHDS~$\Sigma^\mathcal{H}$, a fixed point of the Poincaré map~$\vec{P}$ is never isolated. 
\end{theorem}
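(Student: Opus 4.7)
My plan is to show that the linearisation of the fixed-point equation $\vec{F}(\vec{x}):=\vec{P}(\vec{x})-\vec{x}=\vec{0}$ on $\mathcal{A}_0$ is necessarily rank-deficient at $\vec{x}_0$, and then promote this deficiency into a one-parameter family of nearby fixed points by exploiting that $\vec{P}$ preserves $H_1$ exactly. The main ingredients are Lemma~\ref{theoremHybrid}, the explicit Poincaré Jacobian \eqref{eq:PoincareJacobian}, and the implicit function theorem applied transversally to level sets of $H_1$.

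I begin by specialising Lemma~\ref{theoremHybrid} to the fixed point, with $T:=t^{\mathcal{A}}(\vec{x}_0)$ and $\mat{\Phi}_T:=\mat{\Phi}(T,\vec{x}_0)$. Equation~\eqref{eq:hybridFprop} gives $\vec{f}_1(\vec{x}_0)=\mat{\Phi}_T\vec{f}_1(\vec{x}_0)$ and \eqref{eq:hybridHprop} gives $\mr{d}H_1(\vec{x}_0)=\mr{d}H_1(\vec{x}_0)\,\mat{\Phi}_T$, so $\vec{f}_1(\vec{x}_0)$ and $\mr{d}H_1(\vec{x}_0)$ are respectively a right and a left $1$-eigenvector of $\mat{\Phi}_T$, both nonzero by Lemma~\ref{lemma:nonzerodh}. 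Multiplying \eqref{eq:PoincareJacobian} on the left by $\mr{d}H_1(\vec{x}_0)$ and invoking the orthogonality $\mr{d}H_1(\vec{x}_0)\,\vec{f}_1(\vec{x}_0)=0$ from \eqref{eq:orthogonalFirstIntegral} annihilates the rank-one projection term, leaving $\mr{d}H_1(\vec{x}_0)\,\mr{d}\vec{P}(\vec{x}_0)=\mr{d}H_1(\vec{x}_0)$. Moreover, $\mr{d}H_1(\vec{x}_0)$ cannot be proportional to $\mr{d}a$, since any such proportionality together with $\mr{d}H_1\cdot\vec{f}_1=0$ and $\mr{d}a\cdot\vec{f}_1<0$ would be contradictory. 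Hence $\mr{d}H_1(\vec{x}_0)$ restricts to a nontrivial left $1$-eigenvector on $T_{\vec{x}_0}\mathcal{A}=\ker(\mr{d}a)$, so the operator $L:=\mr{d}\vec{P}(\vec{x}_0)|_{T_{\vec{x}_0}\mathcal{A}}-\mat{I}$ has image contained in the $(n_1-2)$-dimensional subspace $T_{\vec{x}_0}\mathcal{A}\cap\ker(\mr{d}H_1)$, and by rank--nullity, $\dim\ker L\ge 1$.

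To convert this linear deficiency into genuine nearby fixed points, I introduce local coordinates $(h,\vec{y})\in\mathbb{R}\times\mathbb{R}^{n_1-2}$ on $\mathcal{A}$ around $\vec{x}_0$ with $h:=H_1-H_1(\vec{x}_0)$; this is admissible precisely because $\mr{d}H_1$ and $\mr{d}a$ are independent. Since $\vec{P}$ preserves $H_1$ on all of $\mathcal{A}_0$, it decomposes as $\vec{P}(h,\vec{y})=(h,\vec{Q}(h,\vec{y}))$, and fixed points are the zeros of the reduced map $\vec{G}(h,\vec{y}):=\vec{Q}(h,\vec{y})-\vec{y}$, an under-determined system of $n_1-2$ equations in $n_1-1$ unknowns with a known root at $(0,\vec{y}_0)$. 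I expect the main obstacle to lie in controlling the rank of $D\vec{G}(0,\vec{y}_0)$: whenever the submersion condition $\mr{rank}\,D\vec{G}(0,\vec{y}_0)=n_1-2$ holds, the preimage theorem immediately yields a smooth one-dimensional curve of fixed points through $\vec{x}_0$ parametrised by $h$, which proves the claim. Residual degenerate cases are exactly what motivate the dissipation-regularised Poincaré map and Theorem~\ref{theoremXi0} in the next section, where an artificial parameter restores full rank and delivers a robust continuation; in either situation, $\vec{x}_0$ fails to be isolated.
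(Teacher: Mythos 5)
Your first two paragraphs reproduce the paper's core computation: using \eqref{eq:hybridHprop} and \eqref{eq:orthogonalFirstIntegral} together with Lemma~\ref{lemma:nonzerodh}, you exhibit $\mr{d}H_1(\vec{x}_0)$ as a nonzero left $1$-eigenvector of the Jacobian \eqref{eq:PoincareJacobian}, so that $\mr{d}\vec{P}(\vec{x}_0)-\mat{I}$ is rank-deficient; the extra observations that $\mr{d}H_1$ cannot be proportional to $\mr{d}a$ and that the deficiency survives restriction to $T_{\vec{x}_0}\mathcal{A}$ are correct and even slightly sharper than the paper's statement. The paper stops at this point: it reads ``isolated'' as ``the kernel and cokernel of $\mr{d}\vec{P}(\vec{x}_0)-\mat{I}$ are trivial'' and concludes by contradiction. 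You instead attempt to upgrade the linear deficiency into an actual curve of nearby fixed points, and it is in this second half that your argument does not close.

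The gap is concrete: your reduction to $\vec{G}(h,\vec{y})=\vec{Q}(h,\vec{y})-\vec{y}$ delivers non-isolation only under the submersion hypothesis $\mathrm{rank}\,D\vec{G}(0,\vec{y}_0)=n_1-2$, which you do not verify and which can genuinely fail --- it is essentially the requirement that the orbit be normal (that $1$ be a simple eigenvalue of $\mr{d}\vec{P}(\vec{x}_0)$), exactly the hypothesis the paper has to add later. Your fallback, namely that the dissipation-regularized map ``restores full rank'' so that ``in either situation $\vec{x}_0$ fails to be isolated,'' is unsupported: Theorem~\ref{theoremXi0} only asserts that fixed points of $\tilde{\vec{P}}$ force $\xi=0$ and coincide with those of $\vec{P}$; it does not confer regularity on degenerate orbits, and Theorem~\ref{theoremRegularPxi} explicitly presupposes a normal conservative orbit. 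Hence the degenerate case $\dim\ker\bigl(\mr{d}\vec{P}(\vec{x}_0)-\mat{I}\bigr)>1$ is simply not handled, and as written you have proved the claim only for normal (submersive) fixed points. To match the paper you should state explicitly that isolation is being identified with nondegeneracy of $\mr{d}\vec{P}(\vec{x}_0)-\mat{I}$ and conclude directly from your eigenvector computation; if you want the stronger constructive route, you need an additional argument covering the non-normal case, which neither your reduction nor the cited theorems provide.
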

\begin{proof}
For a fixed point $\vec{x}_0\in\mathcal{A}$ that solves $\vec{P}(\vec{x}_0)-\vec{x}_0=\vec{0}$, we assume that it is isolated.
In other words, the kernel and cokernel of the square Jacobian $\mr{d}\vec{P}(\vec{x}_0)-\vec{I}$ is assumed to be trivial and thus, 
\begin{align*}
  \text{dim}(\text{ker}(\mr{d}\vec{P}(\vec{x}_0)-\vec{I}))=\text{dim}(\text{ker}(\mr{d}\vec{P}(\vec{x}_0)\Tr-\vec{I}))=0.  
\end{align*}
However, with the properties \eqref{eq:orthogonalFirstIntegral} and \eqref{eq:hybridHprop}, the row vector $\mr{d}H_1(\vec{x}(t^\mathcal{A}))$ appears to be a left eigenvector of the Poincaré map's Jacobian \eqref{eq:PoincareJacobian}, such that
\begin{flalign}\label{eq:leftEigen}
&\mr{d}H_1(\vec{x}(t^\mathcal{A}))\mr{d}\vec{P}(\vec{x}_0)=\mr{d}H_1(\vec{x}_0)=\mr{d}H_1(\vec{x}(t^\mathcal{A})),&\raisetag{12pt}
\end{flalign}
where we exploited periodicity $\vec{x}(t^\mathcal{A})=\vec{x}_0$ in the last equality of \eqref{eq:leftEigen}.
With \eqref{eq:leftEigen} it is now easy to deduce that $\mr{d}H_1(\vec{x}(t^\mathcal{A}))\Tr\neq \vec{0}$ (Lemma~\ref{lemma:nonzerodh}) is in the kernel of~$\mr{d}\vec{P}(\vec{x}_0)\Tr-\vec{I}$.
This is a contradiction to the assumption of a trivial kernel.
\end{proof}
Note that every fixed point of the Poincaré map is directly related to a periodic orbit with period time $T=t^\mathcal{A}$, such that 
\begin{align}\label{eq:periodicity}
    \vec{\varphi}(T,\vec{x}_0) = \vec{x}_0,
\end{align}
with its local linearization $\mat{\Phi}_T:=\mat{\Phi}\left(T,\vec{x}_0\right)$\footnote{The monodromy matrix~$\mat{\Phi}_T$ is an important tool to study the stability and local existence of periodic orbits (Chapter~7.1~\cite{Leine2004}).
Herein, the eigenvalues of $\mat{\Phi}_T$ are the so called Floquet multipliers.}, called the monodromy matrix.
With the well known \emph{freedom of phase} property~$\mat{\Phi}_T \vec{f}_1(\vec{x}_0) = \vec{f}_1(\vec{x}_0)$ of autonomous systems, due to \eqref{eq:hybridFprop} and \eqref{eq:periodicity}, $\mat{\Phi}_T$ has at least two eigenvalues of 1. 
Hence, Theorem~\ref{thm:continuance} also implies that periodic orbits are {non-isolated} in a cHDS.

\begin{remark}
    {By using an anchor~$a$ and introducing a Poincaré section~$\mathcal{A}$, Theorem~\ref{thm:continuance} offers a crucial step towards the numerical exploration of periodic orbits in HDSs. 
    This method of proving the existence of a family of orbits differs from the original proof~(Theorem~4~\cite{Sepulchre1997}) for ODEs, which relies solely on the properties of the periodic trajectory~\eqref{eq:periodicity} and the monodromy matrix~$\mat{\Phi}_T$.}
    However, events are already an essential part of HDSs, rendering the establishment of a Poincaré section a straightforward procedure~(see also Remark~\ref{remark:eventanchor}).
    Furthermore, this construction is crucial for the numerical exploration of periodic gaits, as discussed in Section~\ref{sec:ContinuationOrbits}.
\end{remark}
Because of the rank deficiency of the Jacobian~$\mr{d}\vec{P}(\vec{x}_0)-\vec{I}$, a periodic orbit of a conservative system is called degenerate (Definition~1~\cite{Sepulchre1997}).
For the targeted exploration of neighboring periodic orbits, we are particularly interested in the weakest form of degeneration.
That is, there are distinct neighboring periodic orbits that lie on adjacent level sets~$\bar{H}_i$ of $H_i\in\mathcal{H}$.
\begin{definition}[Normal Conservative Orbit]
A hybrid periodic trajectory of a cHDS $\Sigma^\mathcal{H}$ is said to be \emph{normal} if 1 is a simple eigenvalue of the Poincaré map's Jacobian $\mr{d}\vec{P}(\vec{x}_0)$ and thus, $\mr{dim}(\mr{ker}(\mr{d}\vec{P}(\vec{x}_0)-\vec{I}))=1$.
\end{definition}
The definition is analog to Definition~3 in \cite{Sepulchre1997} but generalizes to hybrid dynamical systems.
It implies that a normal conservative orbit yields a monodromy matrix $\mat{\Phi}_T$ with exactly two eigenvalues of 1.
Furthermore, there only exists a single~($k_1=1$) hybrid first integral~$H_1\in\mathcal{H}$ in~$\mathcal{X}_1$.
From Lemma~\ref{lemma:NH}, we deduce that all~$H_i\in\mathcal{H}$ are single hybrid first integrals and thus, $k_\mathcal{H}=1$.
With Theorem \ref{thm:continuance}, it is easy to infer that a point~$\vec{x}_0\in\mathcal{A}$ corresponding to a normal conservative orbit lies on a locally defined one-dimensional submanifold (curve) parameterized by the level set~$\bar{H}$.
From this follows that any point on the curve is uniquely defined by the implicit equations 
\begin{align}\label{eq:implicitPoincare}
    \begin{bmatrix}
        \vec{P}(\vec{x}_0)-\vec{x}_0\\
        H_1(\vec{x}_0)-\bar{H}
    \end{bmatrix}=\vec{0}.
\end{align}
A numerical construction of the curve defined by \eqref{eq:implicitPoincare} as well as the detection of turning and bifurcation points for which a periodic orbit ceases to be normal, is described in the next Section \ref{sec:Implementation}. 

\begin{remark}
    Theorem~\ref{thm:continuance} focuses exclusively on the local existence of additional periodic orbits, assuming the presence of periodic solutions. 
    In this paper, we don't address the general existence of periodic orbits as this strongly depends on specific properties of~$\Sigma^\mathcal{H}$.
    However, there are some important sub-classes of~$\Sigma^\mathcal{H}$ that are worth mentioning.
    In Remark~\ref{remark:trivialf}, we have already indicated that for $k_\mathcal{H}=n_1$ the only solutions for $\Sigma^\mathcal{H}$ are equilibria, which however, violate our Assumptions \textit{As1}-\textit{As3}.
    Moreover, the existence of symmetric periodic solutions has already been established in cHDS with $m=1$\footnote{Single phase models $\Sigma$ with $m=1$ are often referred to as simple hybrid systems~\cite{Ames2006}.} and a so-called time-reversal symmetry $\vec{\kappa}:\mathcal{X}_1\to\mathcal{X}_1$, satisfying~$\vec{\kappa}\circ \vec{\varphi}(t,\vec{x}_0)=\vec{\varphi}(-t,\vec{\kappa}\circ\vec{x}_0)$, $\vec{\kappa}\circ\vec{\kappa}=\text{id.}$ and~$\vec{\Delta}_1^2=\vec{\kappa}$~\cite{Colombo2020,Hyon2005,Lamb1998}.\footnote{The concept of time-reversal symmetry is strongly connected to continuous symmetries (Remark~\ref{remark:constructIntegral}). For more information, see~\cite{Lamb1998}.}
\end{remark}

\section{Numerical Exploration of Conservative Orbits}
\label{sec:Implementation}
The connectedness of (normal) conservative orbits, as it was shown in theory in the previous section, can be harnessed in practice through numerical continuation methods~\cite{Allgower2003}.
To this end, we adapt and extend the formulation introduced previously to make it better suited for numerical methods.
Specifically, we expand concepts from~\cite{Sepulchre1997} to incorporate hybrid dynamics and embed a cHDS within a one-parameter family of dissipative dynamics.
Additionally, we provide implementation details for an effective exploration of conservative orbits. 
Among other adjustments, we relax some of the previously stated requirements to achieve a more efficient implementation.

\subsection{Predictor-Corrector Methods}
\label{sec:PCmethods}
Let us first establish some basic concepts of continuation methods, in particular of so called predictor-corrector methods.
Here, we consider maps of the form $\vec{r}:\mathbb{R}^{N+1}\to\mathbb{R}^N$, which are assumed to be sufficiently differentiable ($\vec{r}\in C^p$) and implicitly define surfaces of the form:
\begin{align}
    \mathcal{S}:= \vec{r}^{-1}(\vec{0})=\left\{\vec{u}\in\mathbb{R}^{N+1}~\vert~\vec{r}(\vec{u})=\vec{0}\right\}.
\end{align}
We assume there exists a point $\vec{u}^\ast\in\mathcal{S}$ for which the Jacobian $\mr{d}\vec{r}(\vec{u}^\ast)$ has maximum rank, i.e., $\text{rank}(\mr{d}\vec{r}(\vec{u}^\ast))=N$.
Such a point $\vec{u}^\ast$ is called a \emph{regular} point.
By the implicit function theorem, there exists a $p$-times differentiable curve $\vec{c}(s) \in \mathcal{S}$, with $\vec{c}(0)=\vec{u}^\ast$, in the open interval $s\in (-\varepsilon,\varepsilon)$ for some {$\varepsilon>0$ \cite{Allgower2003}}.
Note that for a regular point $\vec{u}^\ast$, the tangent space $T_{\vec{u}^\ast}\mathcal{S}$ is equivalent to the kernel $\text{ker}(\mr{d}\vec{r}(\vec{u}^\ast))$, since it holds for all $s\in (-\varepsilon,\varepsilon)$:
\begin{align}
    \vec{r}(\vec{c}(s) )=\vec{0}\quad \overset{\tfrac{\mr{d}}{\mr{d}s}}{\Rightarrow} \quad\mr{d}\vec{r}(\vec{c}(s))\cdot\dot{\vec{c}}(s)=\vec{0},
\end{align}
and in particular, $\mr{d}\vec{r}(\vec{u}^\ast)\dot{\vec{u}}^\ast=\vec{0}$, with $\dot{\vec{u}}^\ast=\dot{\vec{c}}(0)$.
To explore this curve numerically, the curve is parameterized with respect to the arc-length parameter $s$ such that $\lVert\dot{\vec{c}}(s)\rVert_2=1$.
Similar to Definition~(2.1.7) in \cite{Allgower2003}, the tangent vector $\vec{\tau}\circ\mr{d}\vec{r}(\vec{c}(s))$ is said to be  \emph{induced by $\mr{d}\vec{r}(\vec{c}(s))$} with
\begin{flalign}\label{eq:tangentvector}
    &\vec{\tau}:=\left\{ \vec{\tau}\in\mathbb{R}^{N+1}\left\vert\begin{array}{cc}
        \mr{d}\vec{r}(\vec{c}(s))\cdot\vec{\tau} & =\vec{0}, \\[1mm]
         \lVert\vec{\tau}\rVert_2 & = 1, \\[1mm]
         \text{det}\left(\begin{bmatrix}
             \mr{d}\vec{r}(\vec{c}(s))\\\vec{\tau}\Tr
         \end{bmatrix}\right) & >0.
    \end{array}\right.\right.&\raisetag{36pt}
\end{flalign}
Note that with the third condition in \eqref{eq:tangentvector}, the vector $\vec{\tau}$ is uniquely defined for any regular point~$\vec{c}(s)$.
This condition on the determinant is needed to consistently define the orientation of the curve~$\vec{c}(s)$.
Since it is possible to trace a curve in two directions, we add a scalar $d\in\{-1,1\}$ to the resulting initial value problem:
\begin{align}\label{eq:IVP}
\dot{\vec{u}}=d\cdot\vec{\tau}\circ\mr{d}\vec{r}(\vec{u}),\quad \vec{u}(0)=\vec{u}_0.
\end{align}
\begin{figure}[t]
    \centering
    \includegraphics{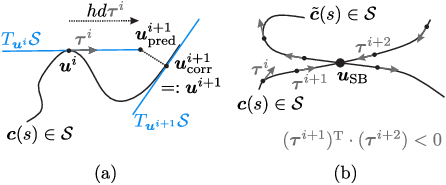}
    \caption{(a) illustrates a predictor-corrector step as described by \eqref{eq:predictor} and \eqref{eq:corrector}. (b) illustrates how a predictor-corrector step jumps over a simple bifurcation (SB) point $\vec{u}_\text{SB}$ and the tangent vectors $\vec{\tau}$ flip orientations.
    Both illustrations abbreviate tangent vectors by $\vec{\tau}^{i}:=\vec{\tau}\circ\mr{d}\vec{r}(\vec{u}^i)$.}
    \label{fig:pcMethod}
\end{figure}
In a predictor-corrector method, there is an initial predictor step that typically approximates the ODE in \eqref{eq:IVP} with a forward Euler and step size~$h>0$ such that
\begin{align}\label{eq:predictor}
    \vec{u}_\text{pred}^{i+1} = \vec{u}^{i} +hd\vec{\tau}\circ\mr{d}\vec{r}(\vec{u}^{i}).
\end{align}
The step size $h$ can be chosen adaptively to trade-off accuracy and speed in tracing $\vec c(s)$ (Chapter~6~\cite{Allgower2003}).
The subsequent corrector step utilizes local contraction properties of Newton's method to solve $\vec{r}(\vec{u})=\vec{0}$.
Hence, the corrector step solves approximately the optimization problem:
\begin{align}\label{eq:corrector}
    \vec{u}^{i+1}:=\vec{u}_\text{corr}^{i+1} \approx \underset{\vec{u}\in\mathcal{S}}{\text{argmin}}\left\lVert\vec{u}-\vec{u}_\text{pred}^{i+1}\right\rVert_2.
\end{align}
The predictor \eqref{eq:predictor} and corrector \eqref{eq:corrector} steps are illustrated in Fig.~\ref{fig:pcMethod}a.
Various alternatives to \eqref{eq:predictor} and \eqref{eq:corrector} are outlined in (Chapter~2.27~\cite{Keller1987}).
These steps are executed repeatedly until the iterative process of tracing the curve $\vec c(s)$ is terminated by the user or by the crossing of so called \emph{simple bifurcation} (SB) points.
So far, we assumed for \eqref{eq:IVP} that all points $\vec{c}(s)$ are regular.
However, when tracing a curve $\vec{c}(s)$ in $\mathcal{S}$, the predictor-corrector method occasionally crosses over simple bifurcation points $\vec{u}_\mr{SB}$ for which $\text{rank}(\mr{d}\vec{r}(\vec{u}_\mr{SB}))=N-1$ and thus, $\text{det}\left(\left[\begin{smallmatrix}
             \mr{d}\vec{r}(\vec{u}_\mr{SB})\\\dot{\vec{u}}_\mr{SB}\Tr
         \end{smallmatrix}\right]\right) =0$.
Hence, the point~$\vec{u}_\mr{SB}$ is singular.
As illustrated in Fig.~\ref{fig:pcMethod}b, a useful indicator for SB points are the comparison of adjacent tangent vectors, since they flip their orientation at an SB point (Chapter~8.1~\cite{Allgower2003}).
Continuing the tracing of the curve $\vec{c}(s)\in\mathcal{S}$ past an SB point can be achieved by a simple redefinition of the direction $d:=-d$ in the predictor~\eqref{eq:predictor}. 
However, transitioning to a new curve~$\tilde{\vec{c}}(s)\in\mathcal{S}$ necessitates additional consideration.
For the identification of tangent vectors at a bifurcating point~$\vec{u}_\mr{SB}$, we employ the Lyapunov-Schmidt reduction (Chapter~8.3~\cite{Allgower2003}).
It is crucial to emphasize that a singular point does not inherently indicate a bifurcation point, meaning the emergence of additional linearly independent tangent vectors. 
Therefore, special attention is required to properly construct tangent vectors at a singular point (Chapter~5~\cite{Keller1987}).

\begin{remark}
    Aside from the presented predictor-corrector methods, there is another class of piecewise-linear continuation methods that do not rely on the differenitability of~$\vec{r}$.
    However, since we have a differentiable Poincaré map~$\vec{P}$ within \eqref{eq:implicitPoincare}, predictor-corrector methods are generally more favorable due to their numerical efficiency.
    For further reading about general numerical continuation methods, we refer to \cite{Allgower2003,Keller1987,Krauskopf2007,Henderson2002}.
\end{remark}

\subsection{Continuation of Periodic Orbits in Conservative Systems}\label{sec:ContinuationOrbits}
In the following, we introduce a modification~$\tilde{\vec{P}}$ to the Poincaré map 
$\vec{P}$ to efficiently trace normal conservative orbits with the numerical continuation methods presented in Section \ref{sec:PCmethods}.
{Furthermore, we show that~$\tilde{\vec{P}}$ and~$\vec{P}$ share the same fixed points.}
Note that that the numerical continuation methods from Section~\ref{sec:PCmethods} are not directly applicable to \eqref{eq:implicitPoincare}, since~$\vec{P}(\vec{x}_0)-\vec{x}_0$ has a square Jacobian matrix.
Hence, we modify the Poincaré map $\tilde{\vec{P}}:\mathcal{A}\times\mathbb{R}\to\mathcal{A}$ with a scalar parameter $\xi$ by
embedding $\Sigma^\mathcal{H}$ in a one-parameter family of dissipative dynamics $\Sigma_\xi=\left(\mathcal{X},\tilde{\mathcal{F}},\mathcal{E},\mathcal{D}\right)$, where
\begin{align}\label{eq:hybridDynamicsNum}
\tilde{\mathcal{F}} := \big\{\dot{\vec{x}}=\underbrace{\vec f_i(\vec x_i)+ \xi\cdot \mr{d}H_i(\vec{x}_i)\Tr}_{=:\tilde{\vec{f}}_i}\big\}_{i=1}^m,
\end{align}
with $H_i\in\mathcal{H}$.
The idea to modify an ODE with a first integral $H_i$ in the form \eqref{eq:hybridDynamicsNum} originates from~\cite{Sepulchre1997}.
Introducing $\xi$ in each phase dynamic~\eqref{eq:hybridDynamicsNum} serves the purpose of consistently influencing the overall hybrid flow~$\vec{x}(t)$, even if the duration~$t_i$ of a specific phase~$i$ approaches zero.
Similar to other modifications of conservative ODEs \cite{Doedel2003,Munoz-Almaraz2003}, the main intention is to resolve inherent state dependencies inflicted by first integrals.
Note that for~$\xi=0$, the conservative dynamics~$\Sigma^\mathcal{H}\equiv\Sigma_0$ are still within the one-parameter family~$\Sigma_\xi$.
In fact, we show that the set of periodic orbits in~$\Sigma_\xi$ is equivalent to periodic orbits of~$\Sigma^\mathcal{H}$.
\begin{theorem}\label{theoremXi0}
The tuple $(\vec{x}_0,\xi)$ is a fixed point of $\tilde{\vec{P}}$ if and only if $\xi=0$ and thus, $\vec{x}_0$ is a fixed point of $\vec{P}$. 
\end{theorem}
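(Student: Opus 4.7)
The plan is to prove both directions. The easy direction: with $\xi=0$, the modified phase dynamics $\tilde{\vec{f}}_i=\vec{f}_i+\xi\,\mr{d}H_i\Tr$ reduce to $\vec{f}_i$, so $\Sigma_\xi$ collapses to $\Sigma^\mathcal{H}$ and $\tilde{\vec{P}}(\cdot,0)\equiv\vec{P}$; hence $(\vec{x}_0,0)$ is a fixed point of $\tilde{\vec{P}}$ iff $\vec{x}_0$ is a fixed point of $\vec{P}$.

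For the non-trivial direction, I would begin with a fixed point $(\vec{x}_0,\xi)$ of $\tilde{\vec{P}}$---equivalently, a periodic orbit of $\Sigma_\xi$ with period $T=t^\mathcal{A}(\vec{x}_0)$---and track the evolution of the hybrid first integral along this orbit. Within each phase $i$, differentiating $H_i$ along the modified flow and exploiting \eqref{eq:orthogonalFirstIntegral} to annihilate the $\mr{d}H_i\cdot\vec{f}_i$ term gives
\begin{align*}
\tfrac{\mr{d}}{\mr{d}t}H_i\big(\vec{x}_i(t)\big)
=\mr{d}H_i\cdot\tilde{\vec{f}}_i
=\xi\,\big\lVert\mr{d}H_i(\vec{x}_i(t))\big\rVert_2^2 \geq 0.
\end{align*}
Across each reset, Df2 guarantees that $H_{i+1}(\vec{x}_{i+1}^+)=H_i(\vec{x}_i^-)$, so the first integral is continuous at transitions. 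Telescoping over a full cycle and using periodicity $\vec{\varphi}(T,\vec{x}_0)=\vec{x}_0$ collapses the boundary terms, yielding
\begin{align*}
0 = H_1(\vec{x}_0)-H_1(\vec{x}_0)=\xi\int_0^T\big\lVert\mr{d}H(\vec{x}(t))\big\rVert_2^2\,\mr{d}t,
\end{align*}
where $\mr{d}H$ is interpreted as $\mr{d}H_i$ on the subinterval spent in phase $i$. This is $\xi$ times a nonnegative quantity.

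The remaining step---and the one I expect to be the main obstacle---is to argue that this integral is \emph{strictly} positive, which forces $\xi=0$. I would argue by contradiction: if $\mr{d}H_i$ vanished identically along the orbit in every phase, then $\tilde{\vec{f}}_i=\vec{f}_i$ on the orbit, so the orbit would in fact be a recurrent trajectory of the unmodified $\Sigma^\mathcal{H}$. The Kozlov-type result invoked in the proof of Lemma~\ref{lemma:nonzerodh} asserts that $\mr{d}H_i\neq\vec{0}$ wherever $\vec{f}_i\neq\vec{0}$; Assumptions~\textit{As1}--\textit{As3} guarantee that each phase is traversed in positive time, so $\vec{f}_i$ must be non-vanishing somewhere on the orbit in every phase, contradicting $\mr{d}H_i\equiv\vec{0}$. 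Hence $\xi=0$, and combined with the easy direction this makes $\vec{x}_0$ a fixed point of $\vec{P}$. The subtlety is that Lemma~\ref{lemma:nonzerodh} is stated for fixed points of $\vec{P}$, not $\tilde{\vec{P}}$; the argument must instead rely on the underlying pointwise implication $\vec{f}_i\neq\vec{0}\Rightarrow\mr{d}H_i\neq\vec{0}$, which is a statement about the conservative vector field alone and thus applies once we have reduced to $\tilde{\vec{f}}_i=\vec{f}_i$ on the orbit.
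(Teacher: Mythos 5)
Your proof is correct and follows essentially the paper's own argument: differentiate $H_i$ along the modified flow, use $\mr{d}H_i\,\vec{f}_i=0$ pointwise and \textit{Df2} across resets so that periodicity reduces to $0=\xi\sum_i\int_0^{t_i}\lVert\mr{d}H_i(\vec{x}_i(t))\rVert_2^2\,\mr{d}t$, and conclude $\xi=0$ from strict positivity of the integrals, which the paper obtains by invoking Lemma~\ref{lemma:nonzerodh} together with $t_i>0$ (\textit{As3}). Your contradiction argument at the positivity step is a slightly more careful rendering of that same invocation (you rightly note that Lemma~\ref{lemma:nonzerodh} is stated for fixed points of $\vec{P}$, whereas the trajectory here belongs to $\Sigma_\xi$); the only blemish is the stray ``$\geq 0$'' in your first display, which fails for $\xi<0$ but is never used.
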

\begin{proof}
A proof is provided in Appendix~\ref{sec:appendixProof}.
\end{proof}
Theorem~\ref{theoremXi0} and its proof are based on (Theorem~1~\cite{Sepulchre1997}) and (Lemma~6~\cite{Munoz-Almaraz2003}).
\begin{remark}
Following the approach in \cite{Munoz-Almaraz2003}, we can add more scalars~$\{\xi_j\}_{j=1}^{N_\mathcal{H}}$ to \eqref{eq:hybridDynamicsNum} when~$\Sigma^\mathcal{H}$ has~$N_\mathcal{H}>1$ functionally independent first integrals~$\{H_{i,j}\}_{j=1}^{N_\mathcal{H}}\in\mathcal{H}$ such that
\begin{align*}
\tilde{\vec{f}}_i = \vec f_i(\vec x_i)+ \sum_{j=1}^{N_\mathcal{H}}\xi_j\cdot \mr{d}H_{i,j}(\vec{x}_i)\Tr.
\end{align*}
Based on Lemma~6 in \cite{Munoz-Almaraz2003}, it is straight forward to expand Theorem \ref{theoremXi0} to show that~$\xi_j=0$, for all~$j=1\dots k$, must hold. 
This approach is particularly useful when we want to explore a family of conservative orbits that are not normal.
However, this generates a multi-dimensional continuation problem which is more difficult to handle numerically \cite{Henderson2002}.
Similar to the methods presented in \cite{Ames2005,Ames2006,Colombo2020}, we assume that~$\Sigma^\mathcal{H}$ can be equivalently reduced to a system with~$N_\mathcal{H}=1$.
\end{remark}
With Theorem~\ref{theoremXi0} and the goal to parameterize periodic orbits by level sets $\bar{H}$\footnote{Adding the first integral constraint in \eqref{eq:statebased} is in some sense redundant, since the {set of fixed points of $\tilde{\vec{P}}$} remains unaltered. However, it enables the search for periodic orbits at a specific level set $\bar{H}$.}, we modify \eqref{eq:implicitPoincare} and obtain
\begin{align}\label{eq:statebased}
\vec{r}(\vec{u}):=
    \begin{bmatrix}
        \tilde{\vec{P}}(\vec{x}_0,\xi)-\vec{x}_0\\
        H_1(\vec{x}_0)-\bar{H}
    \end{bmatrix}=\vec{0},
\end{align}
where $\vec{u}\Tr:=[\vec{x}_0\Tr~\xi~\bar{H}]$ and $N=n_1+1$.
\begin{remark}\label{remark:poincareSection}
    The initial state $\vec{x}_0$ in~\eqref{eq:statebased} is not required to lie on the Poincaré section~$\mathcal{A}$. 
    In other words, any~$\vec{x}_0\in\mathcal{X}_1\backslash \mathcal{A}$ in~$\vec{u}$ can still be used to evaluate~$\vec{r}(\vec{u})$.
    However, at a solution point~$\vec{u}\in\vec{r}^{-1}(\vec{0})$, $\vec{x}_0$ will necessarily lie on $\mathcal{A}$ due to the periodicity~$\vec{x}(t^\mathcal{A})=\vec{x}_0$, where $\vec{x}(t^\mathcal{A})\in\mathcal{A}$.
    To consistently ensure that $\vec{x}_0$ originates from the Poincaré section~$\mathcal{A}$, a method involves projecting $\vec{x}_0$ into an $n_1-1$ dimensional coordinate system locally spanning $\mathcal{A}$. Moreover, if the anchor function~$a$ is linear, rendering the Poincaré section~$\mathcal{A}$ a hyperplane, this coordinate projection is globally defined on $\mathcal{X}_1$ (Chapter~9.1~\cite{Leine2004}).
\end{remark}
The map in \eqref{eq:statebased} is now well suited for numerical continuation.
Furthermore, we show how normal conservative orbits are connected to solutions of~\eqref{eq:statebased}.

\begin{theorem}\label{theoremRegularPxi}
Given a normal conservative orbit such that $\vec{x}_0$ is a fixed point of the Poincaré map~$\vec{P}$, $\vec{u}\Tr=\left[\vec{x}_0\Tr~0~H_1(\vec{x}_0)\right]$ is a regular point of \eqref{eq:statebased}. 
\end{theorem}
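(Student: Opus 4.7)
The plan is to show directly that the Jacobian
\[
\mathrm{d}\vec{r}(\vec{u})=
\begin{bmatrix}
\mathrm{d}\vec{P}(\vec{x}_0)-\mat{I} & \dfrac{\partial \tilde{\vec{P}}}{\partial \xi}\bigg\vert_{(\vec{x}_0,0)} & \vec{0} \\[2mm]
\mr{d}H_1(\vec{x}_0) & 0 & -1
\end{bmatrix}
\]
has full row rank $N=n_1+1$ at the specified point, where I have used $\tilde{\vec{P}}(\vec{x}_0,0)=\vec{P}(\vec{x}_0)=\vec{x}_0$ so that $\partial\tilde{\vec{P}}/\partial\vec{x}_0\big\vert_{(\vec{x}_0,0)}=\mathrm{d}\vec{P}(\vec{x}_0)$. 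I test row independence by supposing $[\vec{a}\Tr,b]\,\mathrm{d}\vec{r}(\vec{u})=\vec{0}$ and proving $\vec{a}=\vec{0}$, $b=0$.

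First, the last column gives $b=0$, so the first block of columns reduces to $\vec{a}\Tr(\mathrm{d}\vec{P}(\vec{x}_0)-\mat{I})=\vec{0}$. Because the orbit is normal, the eigenvalue $1$ of $\mathrm{d}\vec{P}(\vec{x}_0)$ is simple, so $\mathrm{ker}((\mathrm{d}\vec{P}(\vec{x}_0)-\mat{I})\Tr)$ is one-dimensional; by the left-eigenvector identity \eqref{eq:leftEigen} established in the proof of Theorem~\ref{thm:continuance}, that kernel is spanned by $\mr{d}H_1(\vec{x}_0)\Tr$. Thus $\vec{a}=c\cdot\mr{d}H_1(\vec{x}_0)\Tr$ for some scalar $c$. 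The remaining column of the Jacobian then yields the scalar equation
\[
c\cdot\mr{d}H_1(\vec{x}_0)\cdot\frac{\partial \tilde{\vec{P}}}{\partial \xi}\bigg\vert_{(\vec{x}_0,0)}=0,
\]
so everything reduces to showing that the quantity $\mr{d}H_1(\vec{x}_0)\cdot\partial_\xi\tilde{\vec{P}}\big\vert_{(\vec{x}_0,0)}$ is nonzero.

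This is the main (and only nontrivial) obstacle, and I would attack it by exploiting the definition of $\tilde{\mathcal{F}}$ in \eqref{eq:hybridDynamicsNum}. Along the perturbed flow we have $\dot{H}_i(\tilde{\vec{x}}(t))=\mr{d}H_i\cdot\tilde{\vec{f}}_i=\xi\lVert\mr{d}H_i(\tilde{\vec{x}}(t))\rVert_2^2$, and across each reset Definition~\textit{Df2} gives $H_{i+1}(\vec{x}_{i+1}^+)=H_i(\vec{x}_i^-)$, so the jumps contribute nothing. Integrating along a full return time $t^\mathcal{A}(\vec{x}_0,\xi)$ therefore yields
\[
H_1(\tilde{\vec{P}}(\vec{x}_0,\xi))-H_1(\vec{x}_0)=\xi\!\int_0^{t^\mathcal{A}(\vec{x}_0,\xi)}\!\!\lVert\mr{d}H_{i(s)}(\tilde{\vec{x}}(s))\rVert_2^2\,\mr{d}s.
\]
Differentiating at $\xi=0$, the boundary term from $\partial_\xi t^\mathcal{A}$ is killed by the prefactor $\xi$, leaving
\[
\mr{d}H_1(\vec{x}_0)\cdot\frac{\partial \tilde{\vec{P}}}{\partial \xi}\bigg\vert_{(\vec{x}_0,0)}=\int_0^{t^\mathcal{A}}\!\!\lVert\mr{d}H_{i(s)}(\vec{x}(s))\rVert_2^2\,\mr{d}s.
\]

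By Lemma~\ref{lemma:nonzerodh}, the integrand $\lVert\mr{d}H_{i(s)}\rVert_2^2$ is strictly positive along the periodic orbit in every phase, and Assumptions \textit{As1}--\textit{As3} guarantee $t^\mathcal{A}>0$ with each phase duration strictly positive. Hence the integral is strictly positive, which forces $c=0$, hence $\vec{a}=\vec{0}$, and establishes $\mathrm{rank}(\mathrm{d}\vec{r}(\vec{u}))=n_1+1$. The only delicate bookkeeping step is justifying that jumps contribute nothing to the variation of $H_1$ along the hybrid flow—this, however, is immediate from \textit{Df2}, and indeed a version of the same identity is presumably used in the proof of Theorem~\ref{theoremXi0} referenced in Appendix~\ref{sec:appendixProof}, so it can be invoked with minimal additional work.
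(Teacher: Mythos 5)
Your proof is correct, but it takes the dual route to the one in Appendix~\ref{sec:appendixProof}. The paper argues on the right kernel: it shows $\mathrm{dim}(\mathrm{ker}(\mr{d}\vec{r}(\vec{u})))=1$ by setting $\tau_\xi=0$ with an appeal to Theorem~\ref{theoremXi0} (the solution set lies in $\{\xi=0\}$) and then using normality to pin down the unique state direction, with $\tau_{\bar H}$ fixed by the last row. You instead test the rows for linear independence: the left-annihilator must be a left null vector of $\mr{d}\vec{P}(\vec{x}_0)-\mat{I}$, which by normality and the left-eigenvector identity \eqref{eq:leftEigen} forces it to be a multiple of $\mr{d}H_1(\vec{x}_0)$, and you then kill this multiple by showing explicitly that
\begin{align*}
\mr{d}H_1(\vec{x}_0)\cdot\frac{\partial \tilde{\vec{P}}}{\partial \xi}\bigg\vert_{(\vec x_0,0)}
=\int_0^{t^\mathcal{A}}\big\lVert \mr{d}H_{i(s)}(\vec{x}(s))\big\rVert_2^2\,\mr{d}s>0,
\end{align*}
obtained by differentiating at $\xi=0$ the same dissipation identity that underlies the proof of Theorem~\ref{theoremXi0} (the constants $\tilde c_i$ in \eqref{eq:ciTilde}), with positivity from Lemma~\ref{lemma:nonzerodh} and \textit{As3}. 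What this buys you is a fully explicit justification of the step the paper handles informally: a kernel (or cokernel) vector with nonzero $\xi$-component is excluded not merely because solutions have $\xi=0$, but because $\mr{d}H_1\,\partial_\xi\tilde{\vec{P}}\neq 0$; conversely, the paper's version is shorter because it reuses the statement of Theorem~\ref{theoremXi0} rather than re-deriving the integral. Your bookkeeping of the resets via \textit{Df2}, the vanishing of the boundary term through the prefactor $\xi$, and the identification $\partial\tilde{\vec{P}}/\partial\vec{x}_0\vert_{(\vec{x}_0,0)}=\mr{d}\vec{P}(\vec{x}_0)$ are all sound, so no gaps remain.
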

\begin{proof}
Given that $\vec{x}_0$ lies on a normal conservative orbit, we show in Appendix~\ref{sec:appendixProof} that the Jacobian $[\nicefrac{\partial \tilde{\vec{P}}}{\partial \vec{x}_0}-\vec{I},~\nicefrac{\partial \tilde{\vec{P}}}{\partial \xi}]\vert_{(\vec x_0,0)}$ has full (row) rank.
\end{proof}
Note that the implication in Theorem~\ref{theoremRegularPxi} only goes in one direction.
Hence, a regular point~$\vec{u}^\ast$ of~\eqref{eq:statebased} does not necessarily imply a normal conservative orbit.
In this case, $\vec{u}^\ast$ is called a turning point with respect to the parameterization~$\bar{H}$.
That is, the last entry of the tangent vector~$\vec{\tau}\circ \mr{d}\vec{r}(\vec{u}^\ast)$ vanishes, since there is no local change in the first integral's value.
\subsection{Efficient Implementation: State-based vs. Time-based}\label{sec:StateVsTime}
In the following, we present an alternative definition of the mapping function $\vec{r}(\vec{u})$ as an improvement over \eqref{eq:statebased}. 
This revised definition is more amenable to numerical continuation methods, and we provide detailed insights into its practical implementation.

{\renewcommand{\arraystretch}{1.3}
\begin{table*}[!t]
\centering
 \begin{tabular}{l | l | l } 
 ~ & \emph{state-based} & \emph{time-based} \\ [0.5ex] 
 \hline\hline
 \begin{tabular}{@{}l@{}} Termination of\\ODE solver\end{tabular}
 & \begin{tabular}{@{}l@{}} At events $e_i^{i+1}(\vec{x}_i)=0$\\
 $t_i(\vec{x}_i)$ is implicitly defined \\ by an event activation~\eqref{eq:time2transition}\end{tabular}   & \begin{tabular}{@{}l@{}} At provided time $t_i$\\ $t_i$ is an independent variable\end{tabular} \\ 
 \hline
 Assumptions &  \textit{As1}, \textit{As2}, \textit{As3} & \textit{As1} \\
 \hline
 Restrictions &  \begin{tabular}{@{}l@{}}
  No grazing points ($\mr{d}e_i^{i+1}\vec{f}_i^-\to 0$)\\ No vanishing phase time ($t_i\to 0$)\end{tabular} & \begin{tabular}{@{}l@{}} $t_i$ may not attain its infimum as in \eqref{eq:time2transition}.\\ Proper event activation, i.e., $e_i^{i+1}=0$,\\ is only guaranteed at solution points~$\vec{r}^{-1}(\vec{0})$.\end{tabular}  \\
 \hline
 Dimension of $\vec{r}$ & \begin{tabular}{@{}l@{}} Single shooting: $N=n_1+1$\\Mult. shooting: $N=\sum_{i=1}^{m+1}n_i+1$\end{tabular} &\begin{tabular}{@{}l@{}} Single shooting: $N=n_1+m+2$\\Mult. shooting: $N=\sum_{i=1}^{m+1}n_i+m+2$\end{tabular}  \\
 \hline
 Solution space $\mathcal{S}$ & \begin{tabular}{@{}l@{}} $\mathcal{S}_\text{sb}$ is disconnected in general\\ because of its restrictions.\end{tabular} & \begin{tabular}{@{}l@{}} $\mathcal{S}_\text{tb}$ is the larger solution space in general.\\ It holds $\mathcal{S}_\text{sb}\subseteq\mathcal{S}_\text{tb}$.\end{tabular} 
 \end{tabular}
 \caption{A comparison between state-based and time-based integration and their impact on the map $\vec{r}$ and solution space~$\mathcal{S}=\vec{r}^{-1}(\vec{0})$. Note that the solution spaces become equivalent, i.e., $\mathcal{S}_\text{sb}=\mathcal{S}_\text{tb}$, under full compliance with all assumptions and restrictions.}
 \label{tab:stateVStime}
\end{table*}}

For a revised definition of \eqref{eq:statebased}, we decouple events from the integration process of $\mathcal{F}$ in \eqref{eq:recursion}. 
This brakes the functional dependence of $\{t_i^\mathcal{E}\}_{i=1}^m$ and $t_{m+1}^\mathcal{A}$ with $\{\bar{\vec{x}}_i\}_{i=1}^m$ and $\bar{\vec{x}}_{m+1}$, respectively. 
Notably, the definition in~\eqref{eq:time2transition} no longer holds, leading to a situation where the detection of an event is no longer unique.
Instead, we introduce $m+1$ independent variables~$t_i$ with an additional set of~$m+1$ constraints with events~$e_i^{i+1}=0$ and anchor~$a=0$ to maintain an indirect coupling.
Hence, the new map takes the form
{
\begin{subequations}\label{eq:timebased}
\begin{align}
    \vec{r}(\vec{u}) := \begin{bmatrix}
        \vec{r}_\text{periodicity}(\vec{u})\\
        r_\text{anchor}(\vec{u})\\
        \left\{\begin{matrix}
            \vec{r}_\text{shooting}(\vec{u};i)\\
            r_\text{event}(\vec{u};i)
        \end{matrix}\right\}_{i=1}^m\\
        r_\text{first-integral}(\vec{u})
    \end{bmatrix},
\end{align}
with its abbreviated components
\begin{align}
    \vec{r}_\text{p.}(\vec{u}) &= \vec{\varphi}_{1}(t_{m+1},\bar{\vec{x}}_{m+1};\xi)-\bar{\vec{x}}_1,\\
    r_\text{a.}(\vec{u}) &= a\circ\vec{\varphi}_{1}(t_{m+1},\bar{\vec{x}}_{m+1};\xi),\\
    \vec{r}_\text{s.}(\vec{u};i) &=\vec{\Delta}_i^{i+1}\circ \vec{\varphi}_i(t_i,\bar{\vec{x}}_i;\xi)-\bar{\vec{x}}_{i+1},\label{eq:shooting-step}\\
    r_\text{e.}(\vec{u};i) &= e_i^{i+1}\circ \vec{\varphi}_i(t_i,\bar{\vec{x}}_i;\xi),\\
    r_\text{i.}(\vec{u}) &=H_1(\bar{\vec{x}}_1)-\bar{H},
\end{align}
\end{subequations}}
where {$\vec{u}\Tr:=[t_{m+1}~\bar{\vec{x}}_{m+1}\Tr~t_m~\bar{\vec{x}}_{m}\Tr~\cdots~t_1~\bar{\vec{x}}_{1}\Tr~\xi~\bar{H}]$} and thus, $N= \sum_{i=1}^{m+1}n_i+m+2$.
Please be aware that aside from the time decoupling, the initial phase states $\{\bar{\vec{x}}_i\}_{i=2}^{m+1}$ are intentionally treated as independent decision variables, thus transforming~\eqref{eq:timebased} into a multiple shooting problem, introducing the additional shooting constraints in~\eqref{eq:shooting-step}.
In a single shooting implementation as in~\eqref{eq:statebased}, the initial phase states~$\{\bar{\vec{x}}_i\}_{i=2}^{m+1}$ are recursively computed by~\eqref{eq:recursion} and thus, would not appear in~$\vec{u}$ and~\eqref{eq:timebased}.
The multiple shooting formulation is especially advantageous when it comes to computing the Jacobian $\mr{d}\vec{r}$, as the required sensitivities, denoted as $\mat{\Phi}_i(t):=\nicefrac{\partial \vec{x}_i(t)}{\partial \bar{\vec{x}}_i}$ and $\mat{\Psi}_i(t):=\nicefrac{\partial \vec{x}_i(t)}{\partial \xi}$, can be efficiently computed in parallel by solving matrix ODEs~\cite{Dickinson1976}:
\begin{align}\label{eq:diffState}
\begin{aligned}
    \dot{\mat{\Phi}}_i\left(t\right) &= \frac{\partial \tilde{\vec{f}}_i\left(\vec{x}_i,\xi\right)}{\partial \vec{x}_i}\bigg\vert_{(\vec{x}_{i}(t),\xi)}\mat{\Phi}_i(t),\\
    \mat{\Phi}_i(0) &= \vec{I},
\end{aligned}
\end{align}
\begin{align}\label{eq:diffParam}
\begin{aligned}
    \dot{\mat{\Psi}}_i\left(t\right) &= \frac{\partial \tilde{\vec{f}}_i}{\partial \vec{x}_i}\bigg\vert_{(\vec{x}_{i}(t),\xi)}\mat{\Psi}_i(t)+\frac{\partial \tilde{\vec{f}}_i}{\partial \xi}\bigg\vert_{(\vec{x}_{i}(t),\xi)},\\
    \mat{\Psi}_i(0) &= \vec{0}.
\end{aligned}
\end{align}
For a large number of hybrid phases $m$, the sparse structure of the Jacobian becomes apparent:
\begin{subequations}\label{eq:multishootJacobian}
\begin{flalign}\label{eq:dr}
    \mr{d}\vec{r}=\left[\begin{smallmatrix}
    \vec{R}_{m+1} & \vec{0} & \vec{0} & \dots & \vec{0} & \vec{0} & \vec{C} & \vec{\Xi}_{m+1} & \vec{0}\\[1mm]
     \vec{C} & \vec{R}_{m} & \vec{0} & \dots & \vec{0} & \vec{0} & \vec{0} & \vec{\Xi}_{m} & \vec{0}\\[1mm]
     \vec{0} & \vec{C} & \vec{R}_{m-1} &  & \vec{0} & \vec{0} & \vec{0} & \vec{\Xi}_{m-1} & \vec{0}\\[1mm]
     \vdots & \ddots  & \ddots & \ddots & & \ddots &  & \vdots & \vdots\\[2mm]
     \vec{0} & \vec{0} & \vec{0} & & \vec{C} & \vec{R}_{2} & \vec{0} & \vec{\Xi}_{2} & \vec{0}\\[1mm]
     \vec{0} & \vec{0} & \vec{0} & \dots & \vec{0} & \vec{C} & \vec{R}_{1} & \vec{\Xi}_{1} & \vec{0}\\[1mm]
     \vec{0} & \vec{0} & \vec{0}& \dots & \vec{0} & \vec{0} & \vec{h}_1 & 0 & -1
    \end{smallmatrix}\right],
\end{flalign}
with
\begin{align}\label{eq:Ri}
    \vec{R}_i = \left\{\begin{array}{ll}
        \left[\begin{smallmatrix}
            \tilde{\vec{f}}_{m+1} & \vec{\Phi}_{m+1}\\[1mm]
            \mr{d}a\cdot\tilde{\vec{f}}_{m+1} & \mr{d}a\cdot\vec{\Phi}_{m+1}
        \end{smallmatrix}\right] & ,~i=m+1 \\[3mm]
        \left[\begin{smallmatrix}
            \vec{D}_i^{i+1}\cdot\tilde{\vec{f}}_{i} & \vec{D}_i^{i+1}\cdot\vec{\Phi}_{i}\\[1mm]
            \mr{d}e_i^{i+1}\cdot\tilde{\vec{f}}_{i} & \mr{d}e_i^{i+1}\cdot\vec{\Phi}_{i}
        \end{smallmatrix}\right] & ,~\text{otherwise},
    \end{array}\right.
\end{align}
\begin{align}\label{eq:Xii}
   \vec{\Xi}_i = \left\{\begin{array}{ll}
        \left[\begin{smallmatrix}
            \vec{\Psi}_{m+1}\\[1mm]
            \mr{d}a\cdot\vec{\Psi}_{m+1}
        \end{smallmatrix}\right] & ,~i=m+1 \\[3mm]
        \left[\begin{smallmatrix}
            \vec{D}_i^{i+1}\cdot\vec{\Psi}_{i}\\[1mm]
            \mr{d}e_i^{i+1}\cdot\vec{\Psi}_{i}
        \end{smallmatrix}\right] & ,~\text{otherwise},
    \end{array}\right.
\end{align}
{\begin{align}
   \vec{h}_1 = \left[0~\mr{d}H_1(\bar{\vec{x}}_1)\right],~
    \vec{C} = 
    \begin{bmatrix}
        \vec{0} & -\vec{I}\\
        0 & \vec{0}
    \end{bmatrix}
\end{align}}
\end{subequations}
For compactness, we dropped all function arguments in \eqref{eq:Ri} and \eqref{eq:Xii}. All functions are evaluated at their final phase duration $t_i$ provided with the phase flows $\vec{\varphi}_i(t_i,\bar{\vec{x}}_i;\!\xi)$, which are computed simultaneously by the same numerical integration scheme.
Also note that $\vec{R}_i$, $\vec{C}$ and $\vec{\Xi}_i$ are generally of different size, since the state dimension $n_i$ of a phase varies.

In addition to the subtle distinctions in shooting methods, the primary difference between equations \eqref{eq:statebased} and \eqref{eq:timebased} lies in their handling of simulation times $t_i$. 
To numerically implement \eqref{eq:statebased} and, consequently, to compute solutions for $\Sigma_\xi$, an event-driven ODE solver is required that halts at event times~$t_i^\mathcal{E}(\bar{\vec{x}}_i)$.  
We label the computational process for generating Poincaré maps $\vec{P}$ and $\tilde{\vec{P}}$ as \emph{state-based} integration since both $t_i^\mathcal{E}$ and $t_{m+1}^\mathcal{A}$ explicitly depend on the states~$\vec{x}_i$. 
In contrast, an ODE solver for \eqref{eq:timebased} terminates at pre-specified phase times~$t_i$ independent of any events, and is thus referred to as \emph{time-based} integration.

The advantage of state-based integration in \eqref{eq:statebased} is that it maintains the continuation problem with~$N=n_1+1$ low dimensional and hence, is independent of the number of hybrid phases~$m$ in~$\Sigma^\mathcal{H}$.
Additionally, the monodromy matrix~$\vec{\Phi}_T$ can be readily computed in the Jacobian~$\mr{d}\vec{r}$ and can be utilized for stability analysis. 
However, there are two significant drawbacks to using state-based integration in the context of numerical continuation.
In practice, a solution curve~$\vec{r}^{-1}(\vec{0})$ often connects to points~$\vec{u}^\ast$ \cite{Raff2022a,Gan2018,Rosa2014,Rosa2022},
\begin{itemize}
    \item where grazing occurs, i.e., $\mr{d}e_i^{i+1}\vec{f}_i^-\to 0$
    \item or phase durations vanish, i.e., $t_i\to 0$.
\end{itemize}
Grazing points~$\vec{u}^\ast$ indicate a change in the sign of~$\dot{e}_i^{i+1}$ within~$\Sigma$ causing a solution curve~$\vec{r}^{-1}(\vec{0})$ of \eqref{eq:statebased} to terminate at~$\vec{u}^\ast$ as there are no local solutions of~$\Sigma^\mathcal{H}$ beyond this point that satisfy~$\dot{e}_i^{i+1}<0$.
Moreover, since the saltation matrix~$S_i^{i+1}$ in \eqref{eq:Saltation} is not defined for grazing points, the state-based map \eqref{eq:statebased} is non-differentiable at~$\vec{u}^\ast$.
The second issue arises from the vanishing event time~$t_i$, causing numerical inaccuracies within the respective ODE solver \cite{Shampine2000}. 
The ODE solver typically executes a first integration step to obtain~$\dot{e}_i^{i+1}$ which becomes impractical for~$t_i\to 0$. 

In contrast, time-based integration does not require the construction of the saltation matrix~$\vec{S}_i^{i+1}$ for the computation of the Jacobian~$\mr{d}\vec{r}$ as shown in \eqref{eq:multishootJacobian}. 
In \eqref{eq:timebased}, the direction of event activation~$\dot{e}_i^{i+1}$ and the infimum of phase time~$t_i$ as defined in \eqref{eq:time2transition} are not encoded. 
Consequently, the solution space~$\mathcal{S}$ for this formulation is generally larger than that of the state-based implementation presented in \eqref{eq:statebased}. 
Specifically, the phase trajectory~$\vec{x}_i$ in \eqref{eq:timebased} can also be computed in reverse, for~$t_i<0$. 
Thus, the issues related to grazing and vanishing phase durations (violation of Assumptions~\textit{As2}, \textit{As3}) do not impact the time-based implementation \eqref{eq:timebased}. 
However, it is important to note that a solution to \eqref{eq:timebased} may correspond to an event activation with~$\dot{e}_i^{i+1}\geq 0$, or it may ignore prior activation instances for which~$\dot{e}_i^{i+1}<0$, since~$t_i$ does not correspond to the infimum in~\eqref{eq:time2transition}.
An overview of the main differences between a state-based and time-based implementation is provided in Table~\ref{tab:stateVStime}.
\begin{remark}
    A solution~$\vec{u}$ to~\eqref{eq:timebased}, satisfying the conditions of no grazing and strictly positive time durations, such that it attains its infimum in accordance with \eqref{eq:time2transition}, corresponds to a solution~$\vec{x}_0=\bar{\vec{x}}_1$ of \eqref{eq:statebased}.
    This correspondence is established through the application of the implicit function theorem.
    Additionally, when~$\vec{u}$ is a regular solution of~\eqref{eq:timebased}, it follows that the corresponding solution for~\eqref{eq:statebased} is also regular.
    The same implication holds for turning points and thus, whether~$\vec{x}_0$ corresponds to a normal conservative orbit or not.
\end{remark}

\subsection{Initial Periodic Orbits}
To construct a curve within the set~$\mathcal{S}$ using numerical continuation methods, as described in \eqref{eq:IVP}, it is essential to begin with an initial point~$\vec{u}_0\in\mathcal{S}$. 
When we lack specific knowledge about the hybrid system~$\Sigma^\mathcal{H}$, our goal is to obtain any point~$\vec{u}_0\in\mathcal{S}$ by numerically solving~$\vec{r}(\vec{u})=\vec{0}$ starting from an arbitrary initial guess~$\vec{u}_\text{guess}\in \mathbb{R}^{N+1}$. 
In such cases, the solution space can be reduced by keeping the level set~$\bar{H}$ fixed, which makes the problem more tractable.
Since~$\vec{u}_\text{guess}$ could be significantly distant from~$\mathcal{S}$, a common strategy is to expand the region of convergence by modifying the original problem. One popular approach involves minimizing the sum of squares \cite{Ortega2000} or embedding the problem into a so-called global homotopy \cite{Allgower2003}.

Alternatively, when we possess specific knowledge about the hybrid system~$\Sigma^\mathcal{H}$, we can leverage it to construct an initial $\vec{u}_0$. 
Depending on the characteristics of $\Sigma^\mathcal{H}$, there may be opportunities to obtain local solutions that can be constructed analytically. 
If the system exhibits a time-reversal symmetry, as discussed in \cite{Colombo2020} and \cite{Hyon2005}, a constructive approach to finding a starting point $\vec{u}_0$ becomes available. 
Furthermore, for scenarios where our understanding of $\Sigma^\mathcal{H}$ is limited, we can turn to the time-based implementation described in \eqref{eq:timebased}. 
This approach allows us to explicitly exploit analytic grazing solutions, such as equilibria \cite{Rosa2014}, or hybrid dynamics that evolve with zero time duration, i.e., $t_i=0$. 
We will demonstrate the application of this method in constructing initial points for all four examples presented in Section~\ref{sec:Examples}.

\section{Examples from Mechanics}
\label{sec:Examples}
To demonstrate the numerical computation and continuation of normal conservative orbits, we discuss four examples of cHDSs: a bouncing ball, a rocking block, a bouncing rod and a one-legged hopper based on the spring-loaded-inverted-pendulum (SLIP).
They are all based on energetically conservative mechanical systems with lossless collisions and their hybrid first integral~$\mathcal{H}$ consistently represents mechanical energy.
To facilitate comparisons between them, each systems' state and parameter values have been normalized with respect to total mass~$m_\mr{o}$, characteristic length $l_\mr{o}$ and gravity $g$.
The hybrid dynamics~$\Sigma^\mathcal{H}$ of these systems are smooth in all components and thus, $\vec{r}\in C^\infty$.
We specifically highlight the time-based implementation of~$\vec{r}$ for these systems due to its ability to explore non-physical behavior that nonetheless smoothly connects to normal conservative orbits.
The four systems are shown in Figure~\ref{fig:examples}.

We employ a higher-order variable step size integrator (explicit Runge-Kutta (4,5) with error tolerances~$10^{-7}$) to trade-off accuracy and speed in the numerical computation of the map~$\vec{r}(\vec{u})$~\eqref{eq:timebased}.
All four examples are implemented in both MATLAB and Julia, and the corresponding code can be found at \href{https://github.com/raffmax/ConservativeHybridDynamicalSystems}{GitHub}\footnote{\tiny\url{https://github.com/raffmax/ConservativeHybridDynamicalSystems}}.
\begin{figure}[t]
    \centering
    \includegraphics{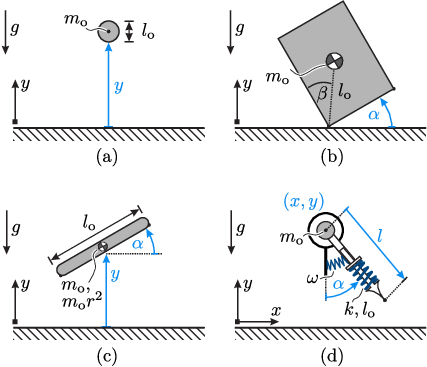}
    \caption{Shown are four conservative mechanical systems with impulsive dynamics due to lossless collisions with the ground:
    (a)~bouncing ball, (b)~rocking block, (c)~bouncing rod and (d)~one-legged hopper.
    The system's configuration variables are shown in blue.}
    \label{fig:examples}
\end{figure}

\subsection{The Bouncing Ball}
The cHDS~$\Sigma^\mathcal{H}$ of the bouncing ball (mass~$m_\mr{o}$, diameter~$l_\mr{o}$), shown in Fig.~\ref{fig:examples}a, is represented as:

\begin{subequations}\label{eq:HDSball}
    \begin{align}
\mathcal{X}_1&=\left\{\vec{x}_1=\begin{bmatrix}
    y\\\dot{y}
\end{bmatrix}~\Bigg\vert~y\in\mathbb{R},\dot{y}\in\mathbb{R}\right\},\\
\vec{f}_1 &=\begin{bmatrix}\dot{y} & -g\end{bmatrix}\Tr,\\
\mathcal{E}_1^1&=\left\{\vec{x}_1=\begin{bmatrix}
    y\\\dot{y}
\end{bmatrix}~\Bigg\vert~\begin{matrix}e_1^1=y=0,\\
    \dot e_1^1=\dot{y}<0
\end{matrix}\right\},\label{eq:Eball}\\
\vec{\Delta}_1^1&=\begin{bmatrix}
    y & -\dot{y}
\end{bmatrix}\Tr,\label{eq:DelatBall}\\
H_1&=\frac{1}{2}m_\mr{o}\dot{y}^2+m_\mr{o}gy.\label{eq:Hball}
\end{align}
\end{subequations}
These dynamics are limited to a single phase ($m=1$) and represent a free-falling object traversing the phase sequence $1\to 1$. 
The hybrid phase transition occurs when the ball touches the ground and the state returns to $\mathcal{X}_1$ via a fully elastic collision to complete the recurrent motion as illustrated in Fig.~\ref{fig:ballrockFrame}.
\begin{figure}[t]
    \centering
    \includegraphics{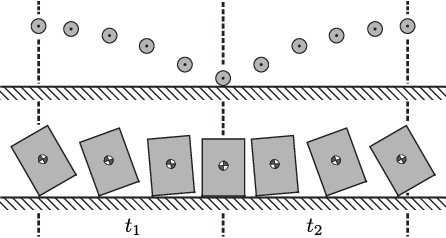}
    \caption{Key frames illustrating a normal conservative orbit of the bouncing ball and the rocking block. In the rocking block example, the system's symmetric motion is exploited, allowing us to compute only its pivoting motion about the left corner.
    }
    \label{fig:ballrockFrame}
\end{figure}
The total energy of the ball in~\eqref{eq:Hball} serves as the hybrid first integral~$\mathcal{H}$.
Given the simplicity of the example, the hybrid flow~$\vec{\varphi}$ of the cHDS~$\Sigma^\mathcal{H}$ of \eqref{eq:HDSball} can be solved analytically according to the recursion in~\eqref{eq:hybridFlow}, which helps illustrate the concepts and symbols introduced in Section~\ref{sec:HDS}.
\begin{subequations}
The hybrid flow emanates from an arbitrary initial point
\begin{align}
\bar{\vec{x}}_1 &=\vec{x}_0=\begin{bmatrix}
    y_{0} & \dot{y}_0
\end{bmatrix}\Tr\in\mathcal{X}_1,
\end{align}
where $\vec{x}_0\in\mathcal{X}_0$ must be further constrained to produce physically realizable recurrent hybrid trajectories that satisfy Assumptions~\textit{As1}-\textit{As3}. Specifically, the initial height must satisfy $y_0\in \mathbb{R}_{>0}$ such that $e_1^1(\vec{x}_0)>0$.
Given this, the hybrid flow progresses through the first phase with
\begin{align}
\vec{\varphi}_1\left(t_1,\bar{\vec{x}}_1\right)
&=\begin{bmatrix}
    -\frac{1}{2}gt_1^2 +\dot{y}_0 t_1 +y_0\\ -gt_1+\dot{y}_0
\end{bmatrix},
\end{align}
until it reaches the event manifold~$\mathcal{E}_1^1$ at time
\begin{align}
    t_1^\mathcal{E}(\bar{\vec{x}}_1)=\frac{\dot{y}_0+\sqrt{\dot{y}_0^2+2y_0g}}{g},\label{eq:t1Ball}
\end{align}
with $e_1^1\circ\vec{\varphi}_1(t_1^\mathcal{E},\bar{\vec{x}}_1)=0$.
The second part of the hybrid flow is initiated at the image of the reset map~$\vec{\Delta}_1^1$ with
\begin{align}
&\begin{aligned}
\bar{\vec{x}}_2 &=\vec{\Delta}_1^1\circ\vec{\varphi}_1\left(t_1^{\mathcal{E}}(\bar{\vec{x}}_1),\bar{\vec{x}}_1\right)\\
&=\begin{bmatrix}
    -\frac{1}{2}gt_1^\mathcal{E}(\bar{\vec{x}}_1)^2 +\dot{y}_0 t_1^\mathcal{E}(\bar{\vec{x}}_1) +y_0\\ -\big(-gt_1^\mathcal{E}(\bar{\vec{x}}_1)+\dot{y}_0\big)
\end{bmatrix},
\end{aligned}\\
&\overset{\eqref{eq:t1Ball}}{\Rightarrow}\bar{\vec{x}}_2=\begin{bmatrix}
y_{2}\\ \dot{y}_2
\end{bmatrix}
=\begin{bmatrix}
    0\\ \sqrt{\dot{y}_0^2+2y_0g}
\end{bmatrix}.\label{eq:x2bar}
\end{align}
Hence, with the abbreviation $t_2=t-t_1^\mathcal{E}(\bar{\vec{x}}_1)$, the overall hybrid flow at time~$t\in\mathcal{T}$ is defined as
\begin{align}\label{eq:hybridFlowBall}
\begin{aligned}
    \vec \varphi(t,\vec x_0) = \vec{\varphi}_1\left(t_2,\bar{\vec{x}}_{2}\right)= \begin{bmatrix}
    -\frac{1}{2}gt_2^2 +\dot{y}_2 t_2 \\ -gt_2+\dot{y}_2
\end{bmatrix}. %
\end{aligned}
\end{align}
The time $t$ can be chosen from the final time interval $\mathcal{T} =t_1^{\mathcal{E}}(\bar{\vec{x}}_1)+[0,t_2^{\mathcal{E}}(\bar{\vec{x}}_2)]$,
in which the time~$t_2^{\mathcal{E}}(\bar{\vec{x}}_2)$ with
\begin{align}
&t_2^{\mathcal{E}}(\bar{\vec{x}}_2) = \frac{2}{g}\dot{y}_2\overset{\eqref{eq:x2bar}}{=}\frac{2}{g}\sqrt{\dot{y}_0^2+2y_0g}
\end{align}
would be the time instance of the next collision with the ground.
\end{subequations}
To eliminate time~$t$ and identify periodic orbits in~\eqref{eq:hybridFlowBall}, we define the Poincaré section
\begin{align}
\mathcal{A}=\left\{\vec{x}_1=\begin{bmatrix}
    y\\\dot{y}
\end{bmatrix}~\Bigg\vert~\begin{matrix}a=\dot{y}=0,\\
    \dot a=-g<0
\end{matrix}\right\},\label{eq:Aball}
\end{align}
which corresponds to the ball being instantaneously at rest.
\begin{subequations}\label{eq:Pball}
This implies the time-to-anchor function
$t_2^\mathcal{A}(\bar{\vec{x}}_2)=\nicefrac{\dot{{y}}_2}{g}$,
leading to the overall accumulated time:
\begin{align}
\begin{aligned}
t^\mathcal{A}(\vec{x}_0)&=t_1^\mathcal{E}(\bar{\vec{x}}_1)+t_2^\mathcal{A}(\bar{\vec{x}}_2)=\frac{\dot{y}_0+2\dot{y}_2}{g}\\
&=\frac{\dot{y}_0+2\sqrt{\dot{y}_0^2+2y_0g}}{g}.
\end{aligned}
\end{align}
With that, the Poincaré map is given as
\begin{align}
\vec{P}(\vec{x}_0)=\vec{x}(t^\mathcal{A})=\begin{bmatrix}
    \frac{\dot{y}_0^2}{2g}+y_0\\0
\end{bmatrix},\quad\vec{x}_0=\begin{bmatrix}
    y_{0} \\ \dot{y}_0
\end{bmatrix}.
\end{align}
\end{subequations}
Note, although $\vec{P}(\vec{x}_0)\in\mathcal{A}$, the initial state~$\vec{x}_0$ still has to be chosen from the Poincaré section~$\mathcal{A}$~(Remark~\ref{remark:poincareSection}).
This requires~$\dot{y}_0=0$ and simplifies~\eqref{eq:Pball} to
\begin{subequations}\label{eq:PballPeriodic}
\begin{align}
&T=t^\mathcal{A}\left(\begin{bmatrix}y_0 & 0\end{bmatrix}\Tr\right)=2\sqrt{2\frac{y_0}{g}},\\
&\vec{P}\left(\begin{bmatrix}y_0 \\ 0\end{bmatrix}\right)=\begin{bmatrix}y_0 \\ 0\end{bmatrix}.
\end{align}
\end{subequations}
Hence, any $\vec{x}_0=[y_0 ~ 0 ]\Tr\in\mathcal{A}$ with $y_0>0$ yields a fixed point of the Poincaré map~$\vec{P}$ and a normal conservative orbit of \eqref{eq:HDSball}, which also fulfills Assumption~\textit{As3}.
That is, these fixed points are not isolated, as anticipated by Theorem~\ref{thm:continuance} based on the existence of a hybrid first integral~$\mathcal{H}$.
A projection of this family of orbits is depicted by the solid blue line in Fig.~\ref{fig:HoverT}a.

For the numerical exploration of periodic orbits, as discussed in Section~\ref{sec:ContinuationOrbits}, we consider the time-based implementation~\eqref{eq:timebased}:
\begin{subequations}\label{eq:rball}
\begin{align}
    \vec{r}(\vec{u})&= \begin{bmatrix}
        \vec{\varphi}_{1}(t_{2},\bar{\vec{x}}_{2};\xi)-\bar{\vec{x}}_1\\
        a\circ\vec{\varphi}_{1}(t_{2},\bar{\vec{x}}_{2};\xi)\\
            \mat{\Delta}_1^1 \circ\vec{\varphi}_1(t_1,\bar{\vec{x}}_1;\xi)-\bar{\vec{x}}_{2}\\
           e_1^1 \circ\vec{\varphi}_1(t_1,\bar{\vec{x}}_1;\xi)\\
        H_1(\bar{\vec{x}}_1)-\bar{H}
    \end{bmatrix},\label{eq:timebasedBall}\\
    \vec{u}&=\begin{bmatrix}
        t_{2} & \bar{\vec{x}}_{2}\Tr & t_1 & \bar{\vec{x}}_{1}\Tr & \xi & \bar{H}
    \end{bmatrix}\Tr,
\end{align} 
\end{subequations}
where the phase flows~$\vec{\varphi}_1$ solve the modified ODE defined by the vector field in \eqref{eq:hybridDynamicsNum}:
\begin{align}\label{eq:fTildeBall}
    \tilde{\vec{f}}_1(\vec{x}_1,\xi)=\begin{bmatrix}
        \dot{y}+\xi m_\mr{o}g\\-g+\xi m_\mr{o}\dot{y}
    \end{bmatrix},\quad \vec{x}_1 =\begin{bmatrix}
        y\\ \dot{y}
    \end{bmatrix}.
\end{align}
Note that for $\xi=0$, it holds~$\tilde{\vec{f}}_1=\vec{f}_1$ and in particular, with~$\bar{H}>0$, the solution spaces of periodic orbits of~\eqref{eq:rball} and \eqref{eq:PballPeriodic} are equivalent.
With the map~$\vec{r}$ and its Jacobian~$\mr{d}\vec{r}$, we explored the solution space~$\mathcal{S}=\vec{r}^{-1}(\vec{0})$ utilizing a predictor-corrector method (Section~\ref{sec:PCmethods}).

The use of numerical continuation methods necessitates an initial point~$\vec{u}_0\in\mathcal{S}$.
With $m=1$, an appropriate initial point for the continuation of periodic solutions is the equilibrium state of $\Sigma^\mathcal{H}$ at energy level~$\bar{H}=0~m_\mr{o}gl_\mr{o}$ and thus, the trivial vector $\vec{u}_0=\vec{0}$.
Notably, this initial point does not fulfill Assumption~\textit{As3}, which, however, does not pose any issues for the time-based implementation~\eqref{eq:timebased}.
Upon closer inspection, one determines that this $\vec{u}_0$ marks a simple bifurcation~(SB) point of $\vec{r}$ with perpendicular tangent vectors $\vec{\tau}_1$ and~$\vec{\tau}_2$, as they are depicted in Fig.~\ref{fig:HoverT}a.
We construct these tangent vectors through a Lyapunov-Schmidt reduction.
Since the Jacobian~$\mr{d}\vec{r}(\vec{u}_0)$ exhibits a vanishing gradient with respect to~$\xi$, the tangent vector~$\vec{\tau}_1$ exclusively aligns with the direction of~$\xi$.
The corresponding projection of points from~$\mathcal{S}$ corresponds entirely to instances in which the ball is completely at rest (orange line in Fig.~\ref{fig:HoverT}a).
Such solutions exist for arbitrary values of~$\xi$.

By following the tangent vector $\vec{\tau}_2$ in the direction of increasing time durations $t_1$ and $t_2$, a normal conservative orbit arises, that corresponds to the fixed points of \eqref{eq:PballPeriodic} (solid blue line).
For negative time durations $t_1<0$ and $t_2<0$ (illustrated by the dashed blue line), the phase dynamics traverse in reverse time. 
While this constitutes a non-physical solution, it is possible to solve the phase dynamics in~\eqref{eq:fTildeBall} in reversed time, by modifying the vector field such that $\dot{\vec{x}}_1=-\tilde{\vec{f}}_1(\vec{x}_1,\xi)$ and solving the corresponding ODE in forward time~$t\in[0,-t_i]$ for $t_i<0$.
Note that only a point of the solid blue line corresponds to a normal conservative orbit and thus, to a solution of~$\Sigma^\mathcal{H}$.
Since the normal conservative orbits in this example can be computed analytically, the relationship between the period time and the first integral's values can be expressed in closed form as~$T=\sqrt{\bar{H}~\nicefrac{8}{m_\mr{o}g^2}}$. For the non-physical periodic orbits with reversed time (dashed blue line) this relationship is expressed by~$T=-\sqrt{\bar{H}~\nicefrac{8}{m_\mr{o}g^2}}$.
\begin{figure*}[t]
    \centering
        \centering
    \includegraphics{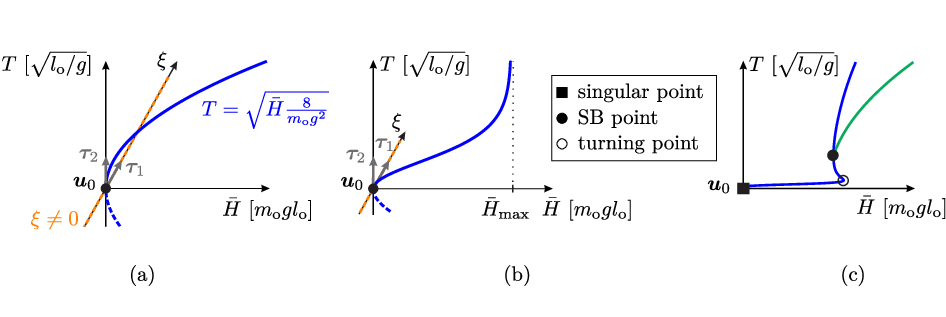}
    \vspace{-30pt} %
    \caption{Projection of points from~$\mathcal{S}$ of (a) the bouncing ball, (b) the rocking block (b), and (c) the bouncing rod. Solid blue lines indicate normal conservative orbits originating from $\vec{u}_0=\vec{0}$. 
    The simplicity of the dynamics of the bouncing ball permits an analytic relationship between $\bar{H}$ and $T$. 
    For the rocking block, $\bar{H}$ is bounded by a homoclinic orbit at $\bar{H}_\text{max}$ as $T$ tends to infinity. 
    For the bouncing rod, $\vec{u}_0$ constitutes a non-simple bifurcation point.
    The family of conservative orbits further features turning points in $\bar{H}$ and a simple bifurcation (SB) point. Projections of the bifurcating solutions are shown by a solid green line, with further details provided in Fig.~\ref{fig:bouncingrod}.}
    \label{fig:HoverT}
\end{figure*}
\subsection{The Rocking Block}
Similar to the example in \cite{Charalampakis2022}, we examined a rectangular block of diagonal size $2l_\mr{o}$, mass $m_\mr{o}$ and so called block slenderness $\beta=0.3~\mr{rad}$, as depicted in Fig.~\ref{fig:examples}b.
The block rocks back and forth between its left and right corners, experiencing lossless collisions during each movement.
A detailed description of this motion can be found in \cite{Housner1963}. 
Due to the block's inherent symmetry, its continuous dynamics during pivoting motion around either corner are equivalent.
Key frames illustrating this symmetric motion are shown in Fig.~\ref{fig:ballrockFrame}.
Consequently, with $m=1$, we define the block's dynamics as
\begin{subequations}\label{eq:HDSblock}
    \begin{align}
\mathcal{X}_1&=\left\{\vec{x}_1=\begin{bmatrix}
    \alpha\\ \dot{\alpha}
\end{bmatrix}:\alpha\in\mathbb{R},\dot{\alpha}\in\mathbb{R}\right\},\\
\vec{f}_1 &= \begin{bmatrix}\dot{\alpha}\\-\frac{3g}{4l_\mr{o}}\sin(\beta-\alpha)\end{bmatrix},\\
\mathcal{E}_1^1&=\left\{\vec{x}_1=\begin{bmatrix}
    \alpha\\\dot{\alpha}
\end{bmatrix}~\Bigg\vert~\begin{matrix}e_1^1=\alpha=0,\\
    \dot e_1^1=\dot{\alpha}<0
\end{matrix}\right\},\\
\vec{\Delta}_1^1&=\begin{bmatrix}
    \alpha & -\dot{\alpha}
\end{bmatrix}\Tr,\\
H_1&=\frac{2}{3}m_\mr{o} l_\mr{o}^2\dot{\alpha}^2+\left(\cos(\alpha-\beta)-\cos(\beta)\right)m_\mr{o}l_\mr{o}g,\label{eq:Hblock}
\end{align}
\end{subequations}
where the angle~$\alpha$ characterizes its rotation about the left corner and the total energy of the block in~\eqref{eq:Hblock} serves as the hybrid first integral~$\mathcal{H}$.
Note that, similar to the bouncing ball example, $H_1$ is defined such that its potential energy is bounded by the zero level set as $\alpha \to 0$.
We further impose the condition~$\alpha < \beta$, as the slenderness ratio~$\beta$ defines the upright unstable equilibrium of the block.
As a result, to ensure that the recurrent hybrid trajectories~$\vec{x}(t)$ are physically realizable and comply with Assumptions~\textit{As1}-\textit{As3}, the initial state domain~$\mathcal{X}_0$ must only include configurations where~$\alpha$ lies within the range~$(0,\beta)$. As before, this ensures~$e_1^1(\vec{x}_0)>0$.

We define the Poincaré section $\mathcal{A}$ based on the anchor function
\begin{align}
    a(\vec{x}_1)=\dot{\alpha},\quad \vec{x}_1=\begin{bmatrix}
        \alpha & \dot{\alpha}
    \end{bmatrix}\Tr,
\end{align} and formulate a time-based problem~\eqref{eq:timebased} of size~$N=7$, aligning with the structure of the preceding bouncing ball example.
Utilizing the equilibrium state~$\vec{u}_0=\vec{0}$ of~$\Sigma^\mathcal{H}$ as the starting point for the continuation process, the solid blue line in Fig.~\ref{fig:HoverT}b once again depicts a family of normal conservative orbits.
Similar to the bouncing ball example, any point~$\vec{x}_0=[\alpha_0~0]\Tr\in\mathcal{A}$ with~$\alpha_0\in(0,\beta)$ corresponds to a fixed point of~$\vec{P}$. 
However, since $\vec{f}_1$ defines the dynamics of a conservative pendulum, the normal conservative orbits $\vec{x}(t)$ of $\Sigma^\mathcal{H}$ cannot be expressed in terms of elementary functions and instead require elliptic functions~\cite{Charalampakis2022}.
In this context, a noteworthy observation is the system's energy upper limit~$\bar{H}_\text{max}$ at $\alpha=\beta$ as the system approaches a homoclinic orbit for $T\to\infty$~\cite{Charalampakis2022}.
Additional solution curves in $\mathcal{S}$ with negative times and non-zero values of $\xi$ exist analogous to the bouncing ball example.

\subsection{The Bouncing Rod}
\begin{figure*}[t]
    \centering
    \includegraphics{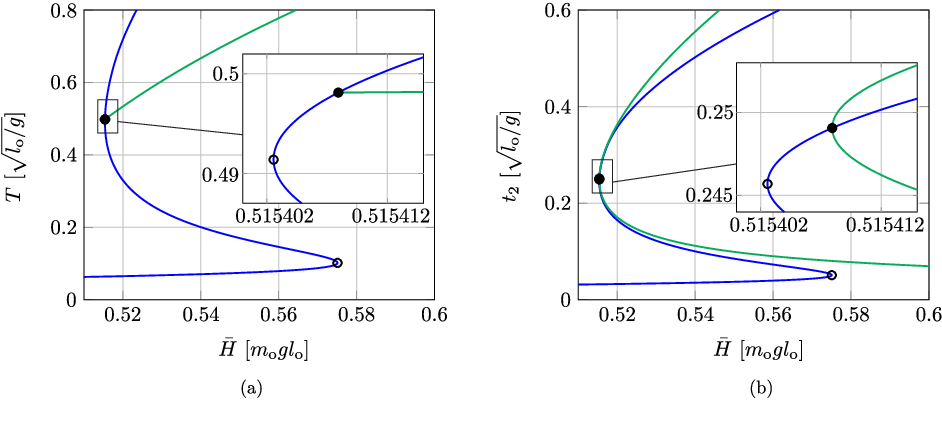}
    \vspace{-30pt} %
    \caption{Projection of orbits of the bouncing rod system, highlighting the turning points as circles and simple bifurcation (SB) points as dots. The solid blue line denotes symmetric orbits with $t_1+t_3=t_2$, while the solid green line illustrates a non-symmetric orbit that originates from an SB point.
    Shown in (a) is a detailed and rotated view of Fig.~\ref{fig:HoverT}c, that highlights the second turning point and the SB point.
    A different projection (using the time of the second flight phase $t_2$) is shown in (b).
    }
    \label{fig:bouncingrod}
\end{figure*}
\begin{figure}[t]
    \centering
    \includegraphics{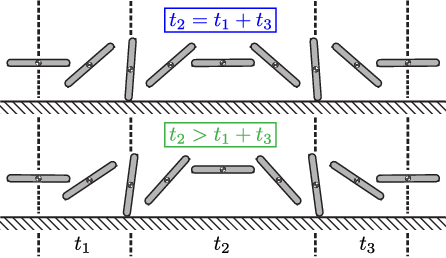}
    \caption{Key frames illustrating two normal conservative orbits of the bouncing rod at period time~$T = 0.8~\sqrt{l_\mr{o}/g}$. The frames compare a symmetric orbit ($t_2 = t_1 + t_3$) with a non-symmetric orbit~($t_2 > t_1 + t_3$). Unlike the non-symmetric case, the symmetric motion reaches the same maximum height twice.
    }
    \label{fig:rodKeyFrame}
\end{figure}
We consider a rod of length $l_\mr{o}$, mass $m_\mr{o}$ and a radius of gyration $r=\nicefrac{l_\mr{o}}{100}$.
The rod's rotation is described by the angle~$\alpha\in (-\pi,\pi)$ and the distance of its center of mass to the ground by $y\in\mathbb{R}$.
Hence, the configuration~$\vec{q}\Tr=[y~\alpha]$ is defined in the configuration space $\mathcal{Q}=\mathbb{R}\times(-\pi,\pi)$.
The cHDS~$\Sigma^\mathcal{H}$ of the bouncing rod is of phase dimension $m=2$ and and is expressed as:
\begin{subequations}\label{eq:HDSrod}
    \begin{align}
&\mathcal{X}_1=\mathcal{X}_2=\left\{\vec{x}_1=\begin{bmatrix} y\\ \alpha \\ \dot{y}\\ \dot{\alpha}
\end{bmatrix}:
    \begin{array}{l}
        [y ~ \alpha]\Tr \in \mathcal{Q},\\\relax
        [\dot{y}~\dot{\alpha}]\Tr \in T_{\vec{q}}\mathcal{Q}
    \end{array}\right\},\\
&\vec{f}_1=\vec{f}_2=\begin{bmatrix}
        \dot{y} & \dot{\alpha} & -g & 0
    \end{bmatrix}\Tr,\label{eq:frod}\\
&e_1^2 = y- \frac{1}{2}l_\mr{o} \sin(\alpha),~~e_2^1 = y+ \frac{1}{2}l_\mr{o} \sin(\alpha),\label{eq:eLeR}\\
    &\vec{\Delta}_1^2 = \vec{x}_1-\frac{4}{l_\mr{o}^2c_\alpha^2+4r^2}\begin{bmatrix}
        0\\0\\ 2r^2\dot{y}-l_\mr{o}r^2c_\alpha\dot{\alpha}\\ \frac{1}{2}l_\mr{o}^2c_\alpha^2\dot{\alpha}-l_\mr{o}c_\alpha\dot{y}
    \end{bmatrix},\label{eq:jumpLeft}\\
         &\vec{\Delta}_2^1 = \vec{x}_2-\frac{4}{l_\mr{o}^2 c_\alpha^2+4r^2}\begin{bmatrix}
        0\\0\\ 2r^2\dot{y}+l_\mr{o}r^2c_\alpha\dot{\alpha}\\ \frac{1}{2}l_\mr{o}^2c_\alpha^2\dot{\alpha}+l_\mr{o}c_\alpha\dot{y}
    \end{bmatrix}\label{eq:jumpRight},\\
&H_{1,1} = H_{2,1} = \frac{1}{2}m_\mr{o}(\dot{y}^2+r^2\dot{\alpha}^2)+m_\mr{o}gy,\label{eq:energyRod}
\end{align}
\end{subequations}
where $c_\alpha:=\cos(\alpha)$. 
The rod possesses contact points on its left and right corners. 
Contacts are treated as fully elastic, resulting in a flight phase between collisions. The event functions~$e_1^2$ and~$e_2^1$ denote when the left and right corners impact the ground, respectively. 
Therefore, the dynamics in~\eqref{eq:HDSrod} represent a left-right contact sequence with~$1\to 2\to 1$. The total energy of the rod in both phases in~\eqref{eq:energyRod} serves as the hybrid first integral~$\mathcal{H}$. It is notable that the angle~$\alpha$ does not appear in~\eqref{eq:energyRod}, indicating another first integral denoted as:
\begin{align}\label{eq:angMomentum}
H_{1,2}=H_{2,2}=m_\mr{o}r^2\dot{\alpha},
\end{align}
referred to as the angular momentum~\cite{Gorni2021}.
However, the discrete maps~$\vec{\Delta}_1^2$ and $\vec{\Delta}_2^1$ instantaneously change the value of~\eqref{eq:angMomentum} for any $\alpha\in (-\pi,\pi)$ in the event sets $\mathcal{E}_1^2$ and $\mathcal{E}_2^1$, respectively.
As~$\{H_{i,2}\}_{i=1}^2$ does not meet Definition~\textit{Df2}\footnote{It can be shown that~$\vec{f}_1$ in \eqref{eq:frod} possesses three independent first integrals, yet only the energy in \eqref{eq:energyRod} satisfies Definition~\textit{Df2}.}, only the system's energy~$\mathcal{H}=\{H_{1,1},H_{2,1}\}$ is considered a hybrid first integral of the cHDS~$\Sigma^\mathcal{H}$.
As with the previous examples, trajectories originating from~$\mathcal{X}_0$ must be physically realizable to satisfy Assumptions~\textit{As1}-\textit{As3}. Hence, any~$\vec{x}_0 \in \mathcal{X}_1$ must fulfill~$e_1^2(\vec{x}_0)>0$.

The anchor is positioned at apex transit such that
\begin{align}
    a(\vec{x}_1)=\dot{y},\quad \vec{x}_1=\begin{bmatrix}
        y& \alpha & \dot{y}&\dot{\alpha}
    \end{bmatrix}\Tr.
\end{align}
Unlike the previous examples, not every point on the Poincaré section $\mathcal{A}$ corresponds to a periodic orbit.
Additionally, while the system's equilibrium~$\vec{u}_0=\vec{0}$ remains a singular point of the time-based implementation~\eqref{eq:timebased}, it does not constitute an SB point.
Given $m=2$ hybrid phases, the problem is of size $N=17$ with the decision vector~$\vec{u}\Tr=[t_{3}~\bar{\vec{x}}_{3}\Tr~t_{2}~\bar{\vec{x}}_{2}\Tr~t_1~\bar{\vec{x}}_{1}\Tr~\xi~\bar{H}]$.
To initiate branching from $\vec{u}_0=\vec{0}$ into a normal conservative orbit, we construct the tangent vector directing towards a symmetric orbit with $t_1+t_3=t_2$. 
The solid blue line in Fig.~\ref{fig:HoverT}c illustrates this family of symmetric normal conservative orbits.
However, these orbits cease to be normal at two turning and an SB point.
At the SB point, the bifurcating orbits, depicted by solid green lines, break the previously mentioned symmetry.
As a result, the SB point creates two additional 1D families of non-symmetric, normal conservative orbits. 
These orbits exhibit a phase duration~$t_2$ that is either shorter or longer than $t_1+t_3$, corresponding to different heights of the rod at the two apexes within a periodic orbit.
Fig.~\ref{fig:bouncingrod} zooms in on the region of special orbits, revealing the two turning points and a projection of the bifurcating orbits.
{The key frames in Fig.~\ref{fig:rodKeyFrame} further illustrate the differences between symmetric and non-symmetric orbits. 
They also emphasize the system's stiff dynamics due to the low radius of gyration~$r$. 
Consequently, while the vertical position~$y$ changes only slightly, the rod undergoes rapid rotation with high angular velocity~$\dot{\alpha}$.}

\subsection{The Spring-Loaded-Inverted-Pendulum}
\begin{figure*}[t]
    \centering
    \includegraphics{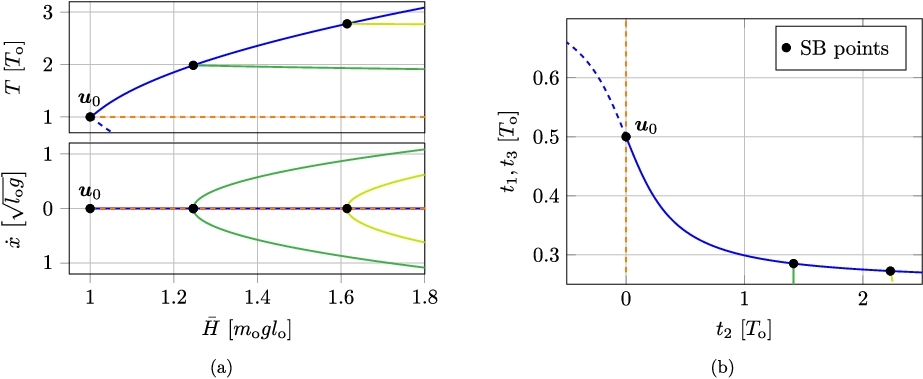}
    \vspace{-5pt} %
    \caption{{}Various projections of points from $\mathcal{S}$ of the SLIP model.  
    Shown in (a) are period time $T$ and forward speed $\dot{x}$ during flight over energy level~$\bar{H}$.
    {(b) shows the stance durations, $t_1$ and $t_3$, relative to the flight time~$t_2$. The phase times after and before the nadir transit, $t_1$ and $t_3$ respectively, appear identical in this projection.}
    The period time~$T_\mr{o}=2\pi \sqrt{m_\mr{o}/k}$ illustrates a vertical harmonic oscillation with no flight time~($t_2=0$), depicted by the dashed orange lines. 
    For positive flight times~$t_2>0$, the solid blue line corresponds to hopping-in-place~($\dot{x}=0$), while negative flight times~$t_2<0$ (dashed blue line) are non-physical.
    Simple bifurcations can be found at higher energy levels $\bar{H}$, depicting the bifurcating solutions of stationary hopping (solid blue line) into forward/backward hopping (solid green lines). 
    }
    \label{fig:slip}
\end{figure*}
\begin{figure*}[t]
    \centering
    \includegraphics{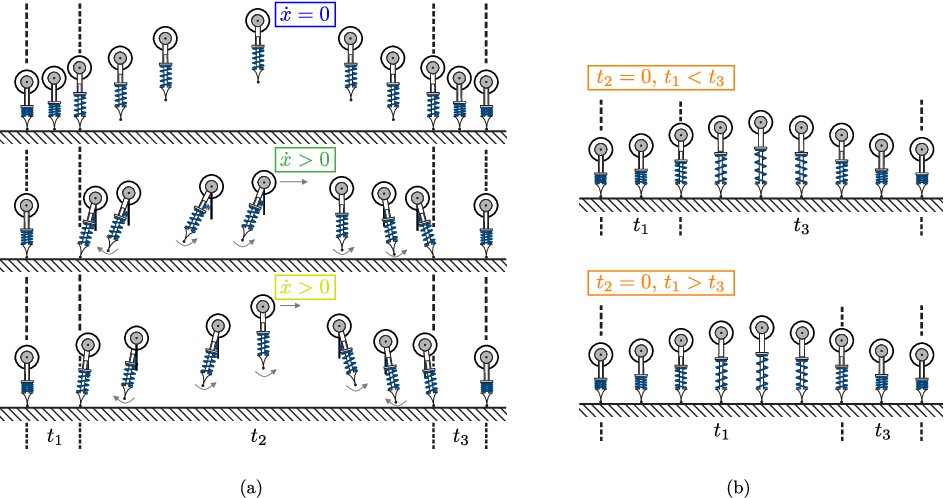}
    \vspace{-15pt} %
    \caption{{Key frames illustrating various periodic orbits of the hopper example. Shown in (a) are three distinct normal conservative orbits of the hopper at energy level~$\bar{H}=1.8~m_\mr{o}gl_\mr{o}$. Additional arrows are provided to visualize linear and angular velocities within the motion.
    In contrast, (b) depicts harmonic in-place oscillations with vanishing flight time~$t_2=0$, which are considered non-physical as they violate Assumptions~\textit{As1}-\textit{As3}.}
    }
    \label{fig:slipKeyFrame}
\end{figure*}
The SLIP-like hopper with swing-leg dynamics in Fig.~\ref{fig:examples}d {is adapted} from~\cite{Gan2018,Raff2022a}.
It is an intriguing example of legged locomotion, featuring several bifurcations of periodic motions (gaits). Unlike the previous examples, it also involves varying phase domains when minimal coordinates are used.
The hopper consists of a single point mass~$m_\mr{o}$ attached to a mass-less leg with a natural length~$l_\mr{o}$ and stiffness~$k = 40\nicefrac{m_\mr{o}g}{l_\mr{o}}$.
{When the leg is not in contact with the ground, its orientation oscillates harmonically with a frequency~$\omega = \sqrt{5\nicefrac{g}{l_\mr{o}}}$.}
{As elaborated in \cite{Gan2018,Raff2022a}, the squared frequency~$\omega^2$ represents the ratio of hip spring stiffness to leg inertia. 
Throughout the modeling process, this ratio remains a constant finite value, even as leg inertia approaches zero, ensuring energy conservation when the leg contacts the ground.
This alteration of ground contact is characterized by a stance phase (S) and a flight phase (F) of the hopper.}
It establishes a cHDS~$\Sigma^\mathcal{H}$ with two hybrid phases ($m=2$):
{\begin{subequations}\label{eq:HDSslip}
\begin{align}
    &\mathcal{X}_\text{S}=\left\{\vec{x}_\text{S}=\begin{bmatrix}
        \alpha \\ l \\ \dot{\alpha} \\ \dot{l}
    \end{bmatrix}:
    \begin{array}{l}
        [\alpha~l]\Tr \in \mathcal{Q}_\text{S},\\\relax
        [\dot{\alpha}~\dot{l}]\Tr \in T_{\vec{q}_\text{S}}\mathcal{Q}_\text{S}
    \end{array}\right\},\label{eq:X1stance}\\
    &\mathcal{X}_\text{F}=\left\{\vec{x}_\text{F}=\begin{bmatrix}
        y\\ \alpha \\ \dot{x}\\ \dot{y}\\ \dot{\alpha}
    \end{bmatrix}:
    \begin{array}{l}
        [y ~ \alpha]\Tr \in \mathcal{Q}_\text{F},\\\relax
        \dot{x} \in \mathbb{R},\\\relax
        [\dot{y}~\dot{\alpha}]\Tr \in T_{\vec{q}_\text{F}}\mathcal{Q}_\text{F}
    \end{array}\right\},\label{eq:X2flight}\\
    &\vec{f}_\mr{S} =\begin{bmatrix}
        \dot{\alpha}\\ \dot{l}\\ -2\frac{\dot{\alpha}\dot{l}}{l}+\frac{g}{l}\sin(\alpha)\\ l\dot{\alpha}^2 -g\cos(\alpha)-\frac{k}{m_\mr{o}}(l-l_\mr{o})
    \end{bmatrix},\\
    &\vec{f}_\mr{F} =\begin{bmatrix}
        \dot{y} & \dot{\alpha} & 0 & -g & -\alpha \omega^2
    \end{bmatrix}\Tr,\label{eq:fFlight}\\
    &e_\mr{S}^\mr{F} =l_\mr{o}-l,\quad e_\mr{F}^\mr{S} = y-l_\mr{o}\cos(\alpha),\\
&\vec{\Delta}_\mr{S}^\mr{F} = \begin{bmatrix}
         l\cos(\alpha)\\
         \alpha\\
        -\dot{l}\sin(\alpha)-\dot{\alpha}l\cos(\alpha)\\
         \dot{l}\cos(\alpha)-\dot{\alpha}l\sin(\alpha)\\
        \dot{\alpha}
    \end{bmatrix},\\
    &\vec{\Delta}_\mr{F}^\mr{S} = \begin{bmatrix}
        \alpha\\
        l_\mr{o}\\
        -\frac{1}{l_\mr{o}}\left(\cos(\alpha)\dot{x}+\sin(\alpha)\dot{y}\right)\\
        \cos(\alpha)\dot{y}-\sin(\alpha)\dot{x}
    \end{bmatrix},\\
    &H_{\mr{S}} = \tfrac{1}{2}m_\mr{o}(\dot{\alpha}^2 l^2 + \dot{l}^2) +m_\mr{o} g\cos(\alpha)l+\tfrac{1}{2} k(l-l_\mr{o})^2\\
&H_{\mr{F}} = \frac{1}{2}m_\mr{o}(\dot{x}^2+\dot{y}^2)+m_\mr{o}gy,
\end{align}
\end{subequations}}
where the system's energy constitutes a hybrid first integral~$\mathcal{H}=\{H_\text{S},H_\text{F}\}$.
$\Sigma^\mathcal{H}$ starts and ends its flow within the stance phase following the contact sequence $\text{S}\to\text{F}\to\text{S}$.
The configuration space during stance in~\eqref{eq:X1stance} is defined as {$\mathcal{Q}_\text{S}=\mathbb{S}^1\times \mathbb{R}$}, with the leg rotation~$\alpha\in\mathbb{S}^1$ and the leg length~{$l\in\mathbb{R}$}. {Thus,~$\vec{q}_\text{S}\Tr = [\alpha~l]$.}
During flight, the configuration space in~\eqref{eq:X2flight} is defined as {$\mathcal{Q}_\text{F}=\mathbb{R}\times\mathbb{S}^1$}, where {$y\in\mathbb{R}$} denotes the vertical position of the point mass.
{Thus,~$\vec{q}_\text{F}\Tr = [y~\alpha]$.}
{To ensure the physical realizability of the recurrent hybrid trajectory~$\vec{x}(t)$ (\textit{As1}-\textit{As3}), the initial point~$\vec{x}_0 \in \mathcal{X}_0$ must be selected such that~$e_\text{S}^\text{F}(\vec{x}_0) > 0$}.
Note that the two phases have distinct domains, with different sizes:~$n_\text{S}=4$ and~$n_\text{F}=5$.
The horizontal velocity~$\dot{x}$ is constant during flight, as can be seen in the phase dynamics~\eqref{eq:fFlight}.
This implies another independent first integral of phase~F by the linear momentum~$H_{\mr{F},2} = m_\mr{o}\dot{x}$.
However, only~$H_{\mr{F}}$ and~$H_{\mr{S}}$ satisfy Definition~\textit{Df2}.

In this example, the Poincaré section is defined at nadir transit during stance, given the anchor function
\begin{align}
    a(\vec{x}_\text{S})=-\dot{l},\quad \vec{x}_\text{S}=\begin{bmatrix}\alpha & l & \dot{\alpha} & \dot{l}\end{bmatrix}\Tr.
\end{align}
This framework {allows} us to derive an analytical starting point~$\vec{u}_0$ for the time-based implementation~\eqref{eq:timebased}, where {$\vec{u}\Tr=[t_{3}~\bar{\vec{x}}_{3}\Tr~t_{2}~\bar{\vec{x}}_{2}\Tr~t_1~\bar{\vec{x}}_{1}\Tr~\xi~\bar{H}]$}.
For an initial orbit involving purely vertical leg motion ($\alpha\equiv 0$) and a flight time~$t_2=0$, the resulting behavior manifests as a linear oscillation in $l$ with a constant period, denoted as 
\begin{align}
    T_\mr{o}=\sum_{i=1}^3 t_i=2\pi \sqrt{m_\mr{o}/k},
\end{align}
wherein $t_1=t_3=T_\mr{o}/2$.
Furthermore, with 
\begin{align*}
\bar{\vec{x}}_{3}=\begin{bmatrix}
    0\\l_\mr{o}\\0\\0
\end{bmatrix},~\bar{\vec{x}}_{2}=\Delta_\text{S}^\text{F}(\bar{\vec{x}}_{3}),~\bar{\vec{x}}_{1}=\begin{bmatrix}
    0\\l_\mr{o}-\frac{2m_\mr{o}g}{k}\\0\\0
\end{bmatrix},
\end{align*}
the starting point~$\vec{u}_0$ defines an SB point at energy level~$\bar{H}=1~m_\mr{o}gl_\mr{o}$.
In Fig.~\ref{fig:slip}, various projections of the starting point and the bifurcating solutions, obtained via a Lyapunov-Schmidt reduction, are shown. 
The solid blue line represents a normal conservative orbit within~$\Sigma^\mathcal{H}$, whereas the dashed orange curve depicts harmonic oscillations at higher energy levels~$\bar{H}$ with a period of $T_\mr{o} = t_1 + t_3$. {Since the flight duration is zero~($t_2 = 0$), these oscillations are non-physical, violating Assumption~\textit{As3}.}
{Note that the stance durations~$t_1$ and $t_3$ are identical, aside from the harmonic oscillations. In the time-based implementation that generates a harmonic oscillation, the direction of event activation is irrelevant. This allows either $t_1$ or $t_3$ to define the timing of the first or second crossing of~$l = l_\mr{o}$ (Fig.~\ref{fig:slipKeyFrame}b). Consequently, due to the symmetry between~$t_1$ and~$t_3$, they appear identical in the projection shown in Fig.~\ref{fig:slip}b.}
Consistent with previous studies \cite{Gan2018,Raff2022a}, Fig.~\ref{fig:slip}a highlights two additional SB points. 
The introduction of further normal conservative orbits, represented by solid green lines, corresponds to forward and backward hopping motions with~$\dot{x}\neq 0$. 
{Fig.~\ref{fig:slipKeyFrame} illustrates key frames of individual orbits of the hopper, highlighting the main characteristics of the different orbit families shown in Fig.~\ref{fig:slip}.}
Additional SB points emerge along the blue curve for larger period times and for $T<T_\mr{o}$, depicted by the dashed blue line.
The comprehensive identification and presentation of all bifurcation points within this model is beyond the intended scope of this paper.
\begin{remark}
    We purposely neglected the horizontal position~$x$ in the flight dynamics, since it is a cyclic coordinate that can be easily reconstructed by a solution~$\vec{\varphi}(t,\vec{x}_0)$ of the hybrid system~$\Sigma^\mathcal{H}$.
    Unlike the SLIP model without swing dynamics ($\dot{\alpha}=0$) during flight \cite{Blickhan1989,Zaytsev2019}, it is not possible to further reduce~$\Sigma^\mathcal{H}$ to a model with~$m=1$, i.e., a single stance phase as in \cite{Colombo2020}. 
    The reason for this is that there exists not a unique analytic solution for the flight time~$t_\mr{F}^\mathcal{E}$.
    In other words, a function that maps~$\dot{\alpha}_\mr{S}^+\in\mathcal{E}_\mr{S}^\mr{F}$ to~$\dot{\alpha}_\mr{F}^+\in\mathcal{E}_\mr{F}^\mr{S}$ is not unique, since there are known periodic orbits where~$\dot{\alpha}_\mr{S}^+=c$, with~$c\in\mathbb{R}$, yields either~$\dot{\alpha}_\mr{F}^+=c$ or~$\dot{\alpha}_\mr{F}^+=-c$ \cite{Gan2018,Raff2022a}.
\end{remark}

\section{Conclusions}
\label{sec:Discus}
In this work, we have provided a comprehensive framework for understanding conservation in hybrid dynamical systems.
Our contributions build on foundational insights on periodic orbits in conservative ODEs~\cite{Sepulchre1997} and include:
\begin{itemize}
    \item Proving the non-isolation of periodic orbits in cHDS~(Theorem~\ref{thm:continuance}).
    \item Expanding the definition of normal periodic orbits to cHDS.
    \item Embedding a cHDS into a dissipative system while preserving the solution space (Theorem~\ref{theoremXi0}).
\end{itemize}
In addition to these theoretical contributions, a secondary focus of our work has been on the practical application of these results to automatically generate periodic orbits within a numerical continuation framework.
To this end, we extended the state-based formulation \eqref{eq:statebased} to a time-based formulation~\eqref{eq:timebased}.
While the state-based formulation \eqref{eq:statebased} relies on Assumptions \textit{As1}-\textit{As3} and requires defining event manifolds~$\mathcal{E}_i^{i+1}$ and a Poincaré section~$\mathcal{A}$ to ensure the hybrid flow transverses these without grazing, our time-based implementation~\eqref{eq:timebased} offers a more flexible approach.
It only requires the components of a cHDS to be differentiable, thus lifting the need for Assumptions~\textit{As2} and \textit{As3}.
This relaxation proves particularly useful in the numerical continuation process. 
As demonstrated in the examples, it facilitates the detection of bifurcation points and the creation of suitable starting points, as these can be constructed from equilibria with zero phase durations.

The implementation of the time-based methodology necessitates a~$C^1$-differentiable HDS, the identification of a hybrid first integral~$\mathcal{H}$, and an initial periodic orbit~$\vec{u}_0$. 
Deriving an HDS with the necessary differentiability conditions is relatively straightforward in most applications. 
However, identifying~$\mathcal{H}$ remains a general challenge (Remark~\ref{remark:constructIntegral}).
For mechanical systems, we can rely on domain-specific knowledge to determine whether quantities like energy or momentum is preserved, serving as a hybrid first integral~$\mathcal{H}$.
This leaves, as a major restriction of our framework, the construction of an initial periodic orbit~$\vec{u}_0$, which necessitates selecting a fixed repeating phase sequence.
Furthermore, by utilizing numerical continuation methods, we are only able to find periodic orbits that are locally connected to~$\vec{u}_0$. 
Hence, while tracing periodic orbits, we may encounter turning points or asymptotes, such as the homoclinic orbit in the rocking block example, and consequently not covering certain level sets of~$\mathcal{H}$.
The inherently local nature of this approach also limits the exploration of isolated solution families within the solution space~$\mathcal{S}$ that do not connect to~$\vec{u}_0$. Therefore, with numerical continuation methods and an initial periodic orbit~$\vec{u}_0$, a complete description of the solution space~$\mathcal{S}$ of periodic orbits cannot be guaranteed.

When considering potential extensions of our framework, allowing for local changes in the phase sequence could significantly enhance its exploratory capabilities. This would require modifying the map~$\vec{r}$ to encode a different phase sequence whenever phase durations approach zero~($t_i\to 0$), indicating a change in the sequence.
For the continuation process to advance through this map transition, the existence of the same periodic orbit within both maps must be ensured.
Considering, for instance, the hopper example, where altering the phase sequence could enable the connection of the periodic orbit~$\vec{u}_0$, with a zero flight duration~$t_2=0$, to a new map~$\vec{r}$ featuring only two stance phases.
Ensuring the existence of this in-place oscillation as a solution in both maps would further establish a connection between the solution space of the cHDS and periodic orbits of a conservative ODE. 
This connection, achieved through a change in the phase sequence, would significantly enhance the systematic utilization of equilibria for generating initial periodic orbits~$\vec{u}_0$, resembling the concept of nonlinear normal modes~\cite{Kerschen2009,Albu-Schaeffer2020}.

Beyond the potential extensions to the framework, the presented methodology offers a robust tool for systematically exploring families of periodic orbits in cHDS.

\appendix
\section{Saltation Matrix}
\label{sec:appendSaltation}
We will construct the saltation matrix by considering local derivatives of the hybrid flow in phase $i$ and $i+1$.
To this end, we take the total derivative of the phase flow~$\vec{\varphi}_{i+1}$ with respect to $\bar{\vec{x}}_i$ and use the following shorthand notation as before:
\begin{align*}
&\vec{x}_i^-:=\vec{x}_i(t_i^\mathcal{E}(\bar{\vec{x}}_i)),\quad\vec{x}_{i+1}^+:=\bar{\vec{x}}_{i+1}\overset{\eqref{eq:recursion}}{=}\mat{\Delta}_i^{i+1}( \vec{x}_i^-),\\
&\vec{f}_i^{-}:=\vec{f}_i(\vec{x}_i^-),\quad \vec{f}_{i+1}^{+}:=\vec{f}_{i+1}(\vec{x}_{i+1}^+),\\
&\mat{D}_i^{i+1}:=\frac{\partial \mat{\Delta}_i^{i+1}(\vec{x}_i)}{\partial \vec{x}_i}\bigg\vert_{\vec{x}_i^-},~\mr{d} e_{i}^{i+1}:=\frac{\partial e_i^{i+1}(\vec{x}_i)}{\partial \vec{x}_i}\bigg\vert_{\vec{x}_i^-},
\end{align*}
With the time $t\in [t_{i}^\mathcal{E},t_{i}^\mathcal{E}+t_{i+1}^\mathcal{E})$ in phase $i+1$, we also define the local time $\tau:=t-t_{i}^\mathcal{E}(\bar{\vec{x}}_i)$ in the phase flow~$\vec{\varphi}_{i+1}(\tau,\bar{\vec{x}}_{i+1})$, and compute its total derivative
\begin{align}\label{eq:totalDiffPhi2}
&\frac{\mr{d}}{\mr{d}\bar{\vec{x}}_i}\vec{\varphi}_{i+1}(\tau,\bar{\vec{x}}_{i+1})\\
    =~&\frac{\mr{d}}{\mr{d}\bar{\vec{x}}_i}\vec{\varphi}_{i+1}\bigg(t-t_{i}^\mathcal{E}(\bar{\vec{x}}_i),\vec{\Delta}_i^{i+1}\circ \vec{\varphi}_i\left(t_{i}^\mathcal{E}(\bar{\vec{x}}_i),\bar{\vec{x}}_i\right) \bigg)\notag\\
    \overset{\text{c.r.}}{=}~&\frac{\partial\vec{\varphi}_{i+1}}{\partial t}\bigg\vert_{(\tau,\vec{x}_{i+1}^+)}\frac{\partial (t-t_{i}^\mathcal{E})}{\partial t_{i}^\mathcal{E}}\frac{\partial t_{i}^\mathcal{E}(\bar{\vec{x}}_i)}{\partial \bar{\vec{x}}_i} \notag\\
    &+ \frac{\partial\vec{\varphi}_{i+1}}{\partial \vec{x}}\bigg\vert_{(\tau,\vec{x}_{i+1}^+)} \frac{\partial\vec{\Delta}_{i}^{i+1}}{\partial \vec{x}}\bigg\vert_{\vec{x}_{i}^-}\frac{\mr{d}\vec{\varphi}_i\left(t_{i}^\mathcal{E}(\bar{\vec{x}}_i),\bar{\vec{x}}_i\right)}{\mr{d}\bar{\vec{x}}_i}\notag\\
    \overset{\eqref{eq:phaseFprop}}{=}& -\mat{\Phi}_{i+1}(\tau,\vec{x}_{i+1}^+)\vec{f}_{i+1}^+\frac{\partial t_{i}^\mathcal{E}(\bar{\vec{x}}_i)}{\partial \bar{\vec{x}}_i}\notag\\
    ~& +\mat{\Phi}_{i+1}(\tau,\vec{x}_{i+1}^+)\mat{D}_i^{i+1}\frac{\mr{d}}{\mr{d}\bar{\vec{x}}_i}\vec{\varphi}_i\left(t_{i}^\mathcal{E}(\bar{\vec{x}}_i),\bar{\vec{x}}_i\right),\notag
\end{align}
where we applied the chain rule at {c.r.} and used Lemma \ref{lemmaFlow} with the flow property~$\vec{f}_{i+1}(\vec{x}_{i+1}(\tau))=\mat{\Phi}_{i+1}(\tau,\vec{x}_{i+1}^+)\vec{f}_{i+1}(\vec{x}_{i+1}^+)$. The remaining total derivative in \eqref{eq:totalDiffPhi2} takes the form
\begin{align}\label{eq:totalDiffPhi1}
    &\frac{\mr{d}}{\mr{d}\bar{\vec{x}}_i}\vec{\varphi}_i\left(t_{i}^\mathcal{E}(\bar{\vec{x}}_i),\bar{\vec{x}}_i\right)\\
    \overset{\text{c.r.}}{=}~&\frac{\partial\vec{\varphi}_{i}}{\partial t}\bigg\vert_{(t_i^\mathcal{E},\bar{\vec{x}}_i)}\frac{\partial t_{i}^\mathcal{E}(\bar{\vec{x}}_i)}{\partial \bar{\vec{x}}_i}+\frac{\partial\vec{\varphi}_{i}}{\partial \vec{x}}\bigg\vert_{(t_i^\mathcal{E},\bar{\vec{x}}_i)}\notag\\
    =~& \vec{f}_i^- \frac{\partial t_{i}^\mathcal{E}(\bar{\vec{x}}_i)}{\partial \bar{\vec{x}}_i}+\mat{\Phi}_i(t_i^\mathcal{E},\bar{\vec{x}}_i).\notag
\end{align}
Introducing additional shorthand notation
\begin{align*}
&\mat{\Phi}_{i+1}:=\mat{\Phi}_{i+1}(\tau,\vec{x}_{i+1}^+),\quad \mat{\Phi}_i:=\mat{\Phi}_i(t_i^\mathcal{E},\bar{\vec{x}}_i),\\
&\mr{d} t_i^\mathcal{E}:= \frac{\partial t_{i}^\mathcal{E}(\bar{\vec{x}}_i)}{\partial \bar{\vec{x}}_i},
\end{align*}
and plugging \eqref{eq:totalDiffPhi1} into \eqref{eq:totalDiffPhi2}, yields 
\begin{align}\label{eq:totalDiffPhi2plug}
&\frac{\mr{d}}{\mr{d}\bar{\vec{x}}_i}\vec{\varphi}_{i+1}(\tau,\bar{\vec{x}}_{i+1})\\
=~& \mat{\Phi}_{i+1}\bigg(\left(\mat{D}_i^{i+1}\vec{f}_i^- -
\vec{f}_{i+1}^+ \right)\mr{d} t_{i}^\mathcal{E}+\mat{D}_i^{i+1}\mat{\Phi}_i\bigg).\notag
\end{align}
The switching time derivative $\mr{d} t_i^\mathcal{E}$ in \eqref{eq:totalDiffPhi2plug} is well-defined due to Assumption~\textit{As2} and the implicit function theorem.
Hence, the total derivative of the event function $e_i^{i+1}$ yields
\begin{align}
e_i^{i+1}\bigg(\vec{\varphi}_i\left(t_{i}^\mathcal{E}(\bar{\vec{x}}_i),\bar{\vec{x}}_i\right)\bigg)=0\quad \forall \bar{\vec{x}}_i\in\tilde{\mathcal{X}}_i,\notag\\
    \Rightarrow\frac{\mr{d}}{\mr{d}\bar{\vec{x}}_i}e_i^{i+1}\circ\vec{\varphi}_i\left(t_{i}^\mathcal{E}(\bar{\vec{x}}_i),\bar{\vec{x}}_i\right)&=0,\notag\\
    \mr{d} e_i^{i+1}\cdot(\underbrace{\vec{f}_i^- \mr{d} t_i^\mathcal{E}+\mat{\Phi}_i}_{\eqref{eq:totalDiffPhi1}})&=0,\notag\\
\Rightarrow \mr{d} t_i^\mathcal{E}=-\frac{\mr{d} e_i^{i+1}}{\mr{d} e_i^{i+1}\vec{f}_i^-}\mat{\Phi}_i,\label{eq:nablaT}
\end{align}
where $\tilde{\mathcal{X}}_i\subset \mathcal{X}_i$, since $\tilde{\mathcal{X}}_i=\mathcal{X}_0$ when $i=1$ and otherwise $\tilde{\mathcal{X}}_i=\mat{\Delta}_{i-1}^{i}(\mathcal{E}_{i-1}^i)$.
Finally, plugging the switching time derivative \eqref{eq:nablaT} into \eqref{eq:totalDiffPhi2plug}, yields the desired structure
\begin{align*}
&\frac{\mr{d}}{\mr{d}\bar{\vec{x}}_i}\vec{\varphi}_{i+1}(\tau,\bar{\vec{x}}_{i+1})
=\mat{\Phi}_{i+1}\mat{S}_i^{i+1}\mat{\Phi}_i,
\end{align*}
where the saltation matrix $\mat{S}_{i}^{i+1}$ takes the form reported in~\eqref{eq:Saltation}.
\section{Poincaré Section}
\label{sec:appendixPoincare}
In Section~\ref{sec:ConservativeOrbits}, we defined the Poincaré section within the returning phase $m+1=1$.
In certain applications it is common to define the anchor as the last event $a:=e_m^{1}$ \cite{Grizzle2014,Westervelt2018},
since the event manifolds~$\mathcal{E}_i^{i+1}$ share the same transversality to the flow as the Poincaré~section $\mathcal{A}$.
Hence, a hybrid trajectory~$\vec{x}(t^\mathcal{A})$ ends immediately after the discrete transition~$\mat{\Delta}_{m}^{1}$ back to the first phase, with vanishing phase time~$t_{m+1}^\mathcal{E}=0$ such that~$t^\mathcal{A}=t^\mathcal{E}$.
This implies for the initial state~$\vec{x}_0$ to lie within~$\mat{\Delta}_{m}^{1}(\mathcal{E}_{m}^1)\subseteq\mathcal{X}_0$.
It also alters the structure of the Poincaré map's Jacobian defined in \eqref{eq:PoincareJacobian}.
While the fundamental matrix in phase~$m$ is still constructed by the chained structure in~\eqref{eq:fundamentalMatrixHybrid}, the last saltation matrix~$\vec{S}_m^{m+1}$ drops the term~$\vec{f}_{m+1}^+$ in~\eqref{eq:Saltation} such that
\begin{align}\label{eq:LastSaltation}
\tilde{\mat{S}}_{m}^{m+1}=\tilde{\mat{S}}_m^{1}=\mat{D}_{m}^{1}\left(\mat{I} -\frac{\vec{f}_m^- \mr{d} e_{m}^{1}}
    {\mr{d} e_{m}^{1}\vec{f}_m^-}\right).
\end{align} 
Hence, with the choice~$a:=e_m^{1}$, the Poincaré map's Jacobian takes the form
\begin{align}
\begin{aligned}
    \mr{d}\mat{P}(\vec{x}_0) =~&\tilde{\mat{S}}_m^{1}\cdot\mat{\Phi}_{m}(t_m^\mathcal{E}(\bar{\vec{x}}_{m}),\bar{\vec{x}}_{m})\\
    &\cdot\prod_{i=1}^{m-1}\mat{S}_i^{i+1}\cdot\mat{\Phi}_{i}(t_i^\mathcal{E}(\bar{\vec{x}}_i),\bar{\vec{x}}_{i}).
\end{aligned}
\end{align}
The results in Chapter \ref{sec:HDS} and Chapter \ref{sec:Implementation} are not affected by this altered Poincaré map and its Jacobian.
\section{More Proofs}
\label{sec:appendixProof}
In the following, we restate and prove Theorem~\ref{theoremXi0} and Theorem~\ref{theoremRegularPxi}.

\begin{customthm}{3.1}
The tuple $(\vec{x}_0,\xi)$ is a fixed point of $\tilde{\vec{P}}$ if and only if $\xi=0$ and thus, $\vec{x}_0$ is a fixed point of $\vec{P}$. 
\end{customthm}
\begin{proof}
Assume that there exists a periodic motion of $\Sigma_\xi$ such that $\tilde{\vec{P}}(\vec{x}_0,\xi)=\vec{x}_0$ and thus,~$\vec{\varphi}(t^\mathcal{A},\vec{x}_0;\xi)=\vec{x}_0$.
This implies that \begin{align}\label{eq:proof_xi0}
H_1(\vec{\varphi}(t^\mathcal{A},\vec{x}_0;\xi))-H_1(\vec{x}_0)=0.
\end{align}
Furthermore, using the shorthand notation $\vec{x}_i(t):=\vec{\varphi}_i(t,\bar{\vec{x}}_i;\xi)$, for each phase $i$ it holds
\begin{align}
H_i(\vec{x}_i(t_i))-H_i(\bar{\vec{x}}_i)&=\underbrace{\int_0^{t_i}\dfrac{\mr{d}}{\mr{d}t}H_i(\vec{x}_i(t))\mr{d}t}_{=: c_i},
\end{align}
where $t_i=t_{m+1}^\mathcal{A}$ for $i=m+1$ and otherwise $t_i=t_{i}^\mathcal{E}$.
Since all reset maps $\mathcal{D}$ in $\Sigma_\xi$ remain independent of $\xi$, the integral $H_{i+1}(\vec{x}_{i+1}^+) = H_{i}(\vec{x}_{i}^-)$ does not change at phase transitions (\textit{Df2}).
Hence, the left hand side in \eqref{eq:proof_xi0} takes the form
\begin{align}
H_1(\vec{\varphi}(t^\mathcal{A},\vec{x}_0;\xi))-H_1(\vec{x}_0)=\sum_{i=1}^{m+1}c_i\overset{!}{=}0.\label{eq:ci}
\end{align}
Herein, the constant $c_i$ takes the particular form:
\begin{align}
c_i&=\int_0^{t_i}\dfrac{\mr{d}}{\mr{d}t}H_i(\vec{x}_i(t))\mr{d}t=\int_0^{t_i}\mr{d}H_i(\vec{x}_i(t))\tilde{\vec{f}}_i(\vec{x}(t))\mr{d}t\notag\\
&=\int_0^{t_i}\mr{d}H_i(\vec{x}_i(t))\left(\vec{f}_i(\vec{x}_i(t))+ \xi\cdot \mr{d}H_i(\vec{x}_i(t))\Tr\right)\mr{d}t\notag\\
&=\xi\cdot\underbrace{\int_0^{t_i}\left\Vert \mr{d}H_i(\vec{x}_i(t))\right\Vert_2^2\mr{d}t}_{=:\tilde{c}_i}\label{eq:ciTilde},
\end{align}
where we used~$\mr{d}H_i(\vec{x}_i)\vec{f}_i(\vec{x}_i)=0$, since the property in \eqref{eq:orthogonalFirstIntegral} also holds for any~$\bar{\vec{x}}_i\in\mathcal{X}_i$ at initial time.
Since~$\mr{d}H_i(\vec{x}_i)\neq \vec{0}$~(Lemma~\ref{lemma:nonzerodh}) and~$t_i>0$ (\textit{As3}) it follows that~$\tilde{c}_i>0$.
Finally, plugging \eqref{eq:ciTilde} into the condition \eqref{eq:ci} yields~$\xi=0$ for a periodic motion of~$\Sigma_\xi$.
\end{proof}

\begin{customthm}{3.2}
Given a normal conservative orbit such that~$\vec{x}_0$ is a fixed point of the Poincaré map~$\vec{P}$, $\vec{u}\Tr=\left[\vec{x}_0\Tr~0~H_1(\vec{x}_0)\right]$ is a regular point of~$\vec{r}$ in \eqref{eq:statebased}.
\end{customthm}
\begin{proof}
For regularity, we need to show that the Jacobian 
\begin{align}\label{eq:druAppendix}
    \mr{d}\vec{r}(\vec{u})=\begin{bmatrix}
    \dfrac{\partial \tilde{\vec{P}}}{\partial \vec{x}_0}\bigg\vert_{(\vec x_0,0)}-\vec{I} & \quad\dfrac{\partial \tilde{\vec{P}}}{\partial \xi}\bigg\vert_{(\vec x_0,0)} & 0\\ \mr{d}H_1(\vec{x}_0) & 0 & -1\end{bmatrix},
\end{align}
has full rank.
This is equivalent to showing that the tangent vector~$\vec{\tau}\circ\mr{d}\vec{r}(\vec{u})$ is unique or~$\text{dim}\left(\text{ker}\left(\mr{d}\vec{r}(\vec{u})\right)\right)=1$.
We consider the tangent vector to be of the form
\begin{align}
\vec{\tau}=\begin{bmatrix}
    \vec{\tau}_{x_0}\\\tau_{\xi}\\\tau_{\bar{H}}
\end{bmatrix},\quad \vec{\tau}_{x_0}\in\mathbb{R}^{n_1},~\tau_{\xi}\in\mathbb{R},~\tau_{\bar{H}}\in\mathbb{R}.
\end{align}
Since~$\xi$ remains zero for normal conservative orbits of~$\vec{r}^{-1}(\vec{0})$ (Theorem~\ref{theoremXi0}), it holds~$\tau_\xi=0$.
The other vector components~$\vec{\tau}_{x_0}$ and~$\tau_{\bar{H}}$ are coupled by the last row of~$\mr{d}\vec{r}(\vec{u})$, i.e., $\mr{d}\bar{H}_1(\vec{x}_0)\vec{\tau}_{x_0}=\tau_{\bar{H}}$.
Hence, $\vec{\tau}_{x_0}=\vec{0}$ implies~$\tau_{\bar{H}}$ and thus, the trivial tangent vector~$\vec{\tau}=\vec{0}$.
Consequently, we solely consider nonzero~$\vec{\tau}_{x_0}$ in~$\mr{d}\vec{r}(\vec{u})\vec{\tau}=\vec{0}$.
Given that~$\vec{x}_0$ lies on a normal conservative orbit, we know that~$\text{dim}(\text{ker}(\mr{d}\mat{P}(\vec{x}_0)-\vec{I}))=1$.
And since~$\xi=0$, the Jacobian~$\mr{d}\mat{P}(\vec{x}_0)$ is equivalent to~$\nicefrac{\partial \tilde{\vec{P}}}{\partial \vec{x}_0}\vert_{(\vec x_0,0)}$ in \eqref{eq:druAppendix}.
This implies that there only exists a single linearly independent vector~$\vec{\tau}_{x_0}\neq \vec{0}$ in the kernel of~$\nicefrac{\partial \tilde{\vec{P}}}{\partial \vec{x}_0}\vert_{(\vec x_0,0)}$.
Since~$\vec{\tau}$ is the only linearly independent vector in the kernel of~$\mr{d}\vec{r}(\vec{u})$, we conclude~$\text{dim}(\text{ker}(\mr{d}\vec{r}(\vec{u}))=1$.
\end{proof}

\printbibliography

\end{document}